\numberwithin{equation}{section}
\newtheorem{lem}{Lemma}[section]
\newtheorem{thm}{Theorem}[section]
\theoremstyle{remark}
\newtheorem{rmk}{Remark}[section]
\renewcommand{\tilde}{\widetilde}
\renewcommand{\hat}{\widehat}
\newcommand{\nn}{\nonumber}
\newcommand{\del}{\partial}
\newcommand{\Denote}{\stackrel{\Delta}{=}}
\newcommand{\ds}{\, {\rm d} s}
\newcommand{\dtheta}{\, {\rm d} \theta}
\newcommand{\dtau}{\, {\rm d} \tau}
\newcommand{\deta}{\, {\rm d} \eta}
\newcommand{\Disp}{\displaystyle}
\newcommand{\Ss}{{\mathbb{S}}}
\newcommand{\Eps}{\epsilon}
\newcommand{\Ni}{\noindent}
\newcommand{\BigO}{{\mathcal{O}}}
\newcommand{\Sgn}{\text{sgn}}
\newcommand{\domega}{\, {\rm d}\omega}
\newcommand{\NN}{\mathbb{N}}
\newcommand{\rd}{\mathrm{d}}
\newcommand{\abs}[1]{\left\lvert#1\right\rvert}
\newcommand{\norm}[1]{\left\lVert#1 \, \right\rVert}
\newcommand{\vint}[1]{\left\langle#1\right\rangle}
\newcommand{\vpran}[1]{\left(#1\right)}
\begin{document}

\title{Validity and Regularization of Classical Half-Space Equations}

\author{Qin Li} 
\address{Mathematics Department, University of Wisconsin-Madison, 480 Lincoln Dr., Madison, WI 53705 USA.}
\email{qinli@math.wisc.edu}
\author{Jianfeng Lu}
\address{Departments of Mathematics, Physics, and Chemistry, Duke University, Box 90320, Durham, NC 27708 USA.}
\email{jianfeng@math.duke.edu}
\author{Weiran Sun}
\address{Department of Mathematics, Simon Fraser University, 8888 University Dr., Burnaby, BC V5A 1S6, Canada}
\email{weirans@sfu.ca}

%
\thanks{We would like to express our gratitude to the support from the
  NSF research network grant RNMS11-07444 (KI-Net). The research of Q.L.~was supported in part by the National Science Foundation under award DMS-1318377 and DMS-1619778. The research of J.L.~was supported in part by the National Science Foundation under award DMS-1454939. The research of W.S.~was supported in part by the NSERC Discovery Individual Grant \#611626.}
\begin{abstract}
Recent result \cite{WG2014} has shown that over the 2D unit disk, the classical half-space equation (CHS) for the neutron transport does not capture the correct boundary layer behaviour as long believed. In this paper we develop a regularization technique for CHS to any arbitrary order and use its first-order regularization to show that  in the case of the 2D unit disk, although CHS misrepresents the boundary layer behaviour, it does give the correct boundary condition for the interior macroscopic (Laplace) equation. Therefore CHS is still a valid equation to recover the correct boundary condition for the interior Laplace equation over the 2D unit disk.
\end{abstract}
%
\maketitle

\section{Introduction}
It is well-known that for kinetic equations with a small Knudsen number  imposed on a bounded domain, a thin layer (coined the Knudsen layer) will form near the boundary of the domain. The kinetic density distribution function changes sharply within this layer from the given arbitrary kinetic boundary conditions to more restrictive interior states, such as those near the equilibrium states.  To make use of the particular structure of the interior state and reduce the computational cost of solving the full scaled kinetic equation over the whole domain, one classical way is to introduce a half-space equation to capture the boundary layer behaviour. In particular, the end-states of the half-space equation will serve as the boundary conditions for the interior equation. 

In this paper, we consider the scaled steady-state isotropic neutron transport equation
\begin{equation}
\begin{gathered} \label{eq:full-kinetic}
    \omega \cdot \nabla_x F + \frac{1}{\Eps} \vpran{F - \vint{F}} = 0 \,,
\qquad
    (x, \omega) \in D \times \Ss^1 \,,
\\
    F \Big|_{\del D} = h(x, \omega) \,,
\qquad
    \omega \cdot n < 0 \,,
\end{gathered}
\end{equation}
where $F = F(x, \omega)$  is the density function and $x, \omega$ are the spatial and velocity variables respectively. The spatial domain $D$ is the unit disk with outward normal $n$. The speed of the particles is constant and is scaled to one so that $\omega \in \Ss^1$. We also have $\vint{F} = \frac{1}{2\pi}\int_{\Ss^1} F(x, \omega) \domega$.

The classical half-space equation associated with~\eqref{eq:full-kinetic} can be derived through asymptotic analysis. Since one of our main objectives is to compare the classical half-space equation to an $\Eps$-Milne equation constructed in~\cite{WG2014}, we adopt similar notations as in~\cite{WG2014}. In particular, we use the polar coordinates within the boundary layer together with the stretched spatial variable such that 
\begin{align*}
   x = (r \cos\phi, r \sin\phi) \,,
\qquad
   \omega = (- \sin\xi, -\cos\xi) \,,
\qquad
  \eta = \frac{1-r}{\Eps} \,,
\qquad
  \theta = \phi + \xi \,.
\end{align*}
In these notations, the leading-order classical half-space equation has the form
\begin{align}
   \sin\theta \frac{\del f_0}{\del \eta} + f_0& - \vint{f_0} = 0 \,,  \label{eq:kinetic-classical-0}
\\
   f_0 \big|_{\eta=0} &= h_0(\theta, \phi) \,,  \label{eq:kinetic-classical-1-0}
\qquad
   \sin\theta > 0 \,,
\\
   f_0 \to  &\, f_{0,\infty}   
\qquad \hspace{0.3cm}
   \text{\qquad as $\eta \to \infty$,}  \label{eq:kinetic-classical-2-0}
\end{align}
where $\vint{f_0} = \frac{1}{2\pi} \int_{-\pi}^{\pi} f_0(\eta, \theta, \phi) \dtheta$ and     
$h_0(\theta, \phi) = h(x ,\omega)$ for $x \in \del D$. Meanwhile, the leading-order interior solution satisfies
\begin{align}
    - \Delta_x U_0(x) &= 0 \,,
\qquad
    x \in D \,,    \label{eq:U-0-interior}
\\
   U_0 \Big|_{\del D} &= u_0 (x) \,,   \label{bdry:U-0}
\end{align}
where $u_0 (x) = f_{0, \infty}(\phi)$ with $x = (\cos\phi, \sin\phi)$.

The question of finding the leading-order approximate solution to~\eqref{eq:full-kinetic} has been considered as settled since the work \cite{BLP:79}, in which it was shown that
\begin{align} \label{error:classical}
    \norm{F - (f_0 - f_{0, \infty} + U_0)}_{L^\infty(D \times \Ss^1)}
= \BigO(\Eps) \,.
\end{align}
However, in a series of recent works \cite{WG2014, WYG2016, GW2016} the authors constructed counterexamples  such that 
\begin{align} \label{error:O-1-L-inf} 
    \norm{F - (f_0 - f_{0, \infty} + U_0)}_{L^\infty(D \times \Ss^1)}
= \BigO(1) \,.
\end{align}
This indicates that the classical half-space equation fails to capture the correct boundary layer behaviour. In \cite{WG2014} where the unit disk is considered, the authors introduced an $\Eps$-Milne equation which has the form
\begin{align} 
   \sin\theta \frac{\del f_{0,\Eps}}{\del \eta} 
   - \frac{\Eps \psi(\Eps \eta)}{1- \Eps \eta} &\cos\theta \frac{\del f_{0,\Eps}}{\del \theta}
   + f_{0,\Eps} - \vint{f_{0,\Eps}} = 0 \,, \label{eq:kinetic-eps-0}
\\
   &f_{0,\Eps} \big|_{\eta=0} = h_0(\theta, \phi) \,,
\qquad
   \sin\theta > 0 \,,  \label{eq:kinetic-eps-1-0}
\\
   &f_{0,\Eps} \to  f_{\infty,\Eps}
\qquad \hspace{0.2cm}
   \text{\qquad as $\eta \to \infty$,} \label{eq:kinetic-eps-2-0}
\end{align}
where $\psi$ is a proper cutoff function. Using this new system as the boundary layer equation, they have proved that 
\begin{align}
    \norm{F - (f_{0,\Eps} - f_{\infty, \Eps} + U_{0,\Eps})}_{L^\infty(D \times \Ss^1)}
= \BigO(\Eps) \,,
\end{align}
where $U_{0,\Eps}$ satisfies the Laplace equation on the disk with the boundary condition given by $f_{\infty, \Eps}$. Later this result is generalized to the annulus \cite{WYG2016} and the general 2D convex domains with diffusive boundary conditions \cite{GW2016}. Similar $\Eps$-Milne equations are used in \cite{WYG2016, GW2016} as the boundary layer equations. 

These surprising results show that the $\Eps$-Milne systems are indeed the correct boundary layer equations. The seemingly small $\Eps$-term in~\eqref{eq:kinetic-eps-0} plays a major role which makes the equation singular. This then suggests challenges on numerical computations to find the proper boundary conditions for the interior equation, since directly solving the $\Eps$-Milne to obtain the end-states as the correct boundary conditions is probably as expensive as solving the original full scaled kinetic equation~\eqref{eq:full-kinetic}. In this sense, despite its obvious theoretical importance, the $\Eps$-Milne equation does not seem to serve the original purpose of reducing computational costs. 

In this paper, we address the validity of the classical half-space equation by using the $\Eps$-Milne system as an intermediate equation. Our first main result is: although~\eqref{error:classical} does not hold on the entire disk, it turns out that the away from the boundary layer, the interior solution $U_0$ generated from the end-state $f_{0, \infty}$ of the classical half-space equation still gives a correct leading-order approximation. More precisely, there exists a constant $C(\alpha)$ such that
\begin{align} \label{error:new}
    \norm{F - U_0}_{L^\infty(\alpha D \times \Ss^1)} \leq C(\alpha) \Eps^{2/3} \,,
\end{align}
where $\alpha D = \{(\alpha x, \alpha y) | (x, y) \in D\}$ for any $0 < \alpha < 1$. 
Therefore, the $\BigO(1)$-error of the approximate solution $f_0 + U_0$ is restricted to the thin boundary layer and does not propagate inside. Note that $\Eps^{2/3}$ may not be the optimal decay rate. 

One of the main tools that we develop to prove~\eqref{error:new} is a regularization procedure designed particularly for the classical half-space equation. It is known ( see for example \cite{CZ1967, TF2013, Chen2013, CFLT2016, Wing1962} and references therein) that regardless of the regularity of the given incoming data, the half-space equation~\eqref{eq:kinetic-classical}-\eqref{eq:kinetic-classical-2} has a generic jump at $\theta = 0$ as well as a logarithmic singularity as $\theta \to 0^-$. In fact it is exactly this singularity that renders the failure of the classical  error estimate~\eqref{error:classical}. For the purpose of proving~\eqref{error:new}, we show a first-order regularization that makes the modified solution Lipschitz. In the second part of this paper, we generalize this procedure to obtain regularizations of solutions to the classical half-space equation to any arbitrary order. The higher-order regularization will be useful for comparing the classical half-space equation with the $\Eps$-Milne system over general domains. We leave the general geometry to later work to avoid overburdening the current paper.

The rest of the paper is laid out as follows. In Section~\ref{Sec:error-estimate} we use the regularization technique and the $\Eps$-Milne equation to show~\eqref{error:new}. In Section~\ref{Sec:numerics}, we show numerical evidence of the non-convergence of the classical approximation in the $L^\infty$-norm and convergence in the $L^2$-norm. We also numerically compare the classical half-space equation with the $\Eps$-Milne equation. In Section~\ref{Sec:regularization}, we show the general regularization of the half-space equation to arbitrary orders. 

\section{Comparison with Wu-Guo's $\Eps$-Milne equation} \label{Sec:error-estimate}
In this section we compare the end-states of the classical half-space equation and the $\Eps$-Milne equation. To simplify the notation, we will use $f$ and $f_\Eps$ in place of $f_0$ and $f_{0, \Eps}$ for their solutions. The two equations are repeated below: the classical half-space equation
\begin{align}
   \sin\theta \frac{\del f}{\del \eta} + f& - \vint{f} = 0 \,,  \label{eq:kinetic-classical}
\\
   f \big|_{\eta=0} &= h_0(\theta, \phi) \,,  \label{eq:kinetic-classical-1}
\qquad
   \sin\theta > 0 \,,
\\
   f \to  &\, f_\infty(\phi)   
\qquad \hspace{0.2cm}
   \text{\qquad as $\eta \to \infty$,}  \label{eq:kinetic-classical-2}
\end{align}
and the $\Eps$-Milne equation
\begin{align} 
   \sin\theta \frac{\del f_\Eps}{\del \eta} 
   - \frac{\Eps \psi(\Eps \eta)}{1- \Eps \eta} &\cos\theta \frac{\del f_\Eps}{\del \theta}
   + f_\Eps - \vint{f_\Eps} = 0 \,, \label{eq:kinetic-eps}
\\
   &f_\Eps \big|_{\eta=0} = h_0(\theta, \phi) \,,
\qquad 
   \sin\theta > 0 \,,  \label{eq:kinetic-eps-1}
\\
   &f_\Eps \to  f_{\infty,\Eps}(\phi)
\qquad \hspace{0.2cm}
   \text{\qquad as $\eta \to \infty$,} \label{eq:kinetic-eps-2}
\end{align}
where $f_\infty, f_{\infty, \Eps}$ only depend on $\phi$ and $\psi$ is a smooth cut-off function such that 
\begin{align*}
    \psi(r) 
    = \begin{cases}
        1 \,, & 0 \leq r \leq 1/2 \,, \\[2pt]
        0 \,, & r \geq 3/4 \,.
       \end{cases}
\end{align*}
Since the angular variable $\phi$ does not play a role in our analysis, we will suppress it in the notations from now on unless otherwise specified. 

The well-posedness of~\eqref{eq:kinetic-classical}-\eqref{eq:kinetic-classical-2} and~\eqref{eq:kinetic-eps}-\eqref{eq:kinetic-eps-2} are thoroughly studied in \cite{CoronGolseSulem:88} and \cite{WG2014}. In \cite{WG2014}, it is pointed out that the second term on the left involving $\Eps$ in equation~\eqref{eq:kinetic-eps} has a non-trivial effect which induces an order $\BigO(1)$
difference between $f$ and $f_\Eps$ measured in the $L^\infty$-norm over the whole domain. However, it is not clear from the analysis in \cite{WG2014} whether the end-states $f_\infty$ and $f_{\infty, \Eps}$ will differ by order $\BigO(1)$ as well. In this section we show that in fact they only differ on a scale which vanishes with $\Eps$. The main result is 
\begin{thm} \label{thm:main}
Let $f, f_\Eps$ be the solutions to equations~\eqref{eq:kinetic-classical}-\eqref{eq:kinetic-classical-2} and~\eqref{eq:kinetic-eps}-\eqref{eq:kinetic-eps-2} respectively. Then there exists a constant $C_0$ independent of $\Eps$ such that
\begin{align*}
   \abs{f_\infty - f_{\infty, \Eps}} \leq C_0 \Eps^{2/3} \,.
\end{align*}
\end{thm}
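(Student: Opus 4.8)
The plan is to treat the $\Eps$-Milne equation~\eqref{eq:kinetic-eps} as a perturbation of the classical half-space equation~\eqref{eq:kinetic-classical} and to control the effect of the perturbation on the end-state. Write $g = f_\Eps - f$. Subtracting the two equations, $g$ satisfies a half-space transport equation with zero incoming data at $\eta = 0$, the usual collision term $g - \vint{g}$, and a source term coming from the extra $\Eps$-term, namely
\begin{align*}
   \sin\theta\, \del_\eta g - \frac{\Eps\psi(\Eps\eta)}{1-\Eps\eta}\cos\theta\,\del_\theta g + g - \vint{g}
   = \frac{\Eps\psi(\Eps\eta)}{1-\Eps\eta}\cos\theta\,\del_\theta f \,.
\end{align*}
The end-state difference $f_\infty - f_{\infty,\Eps}$ is then $-\vint{g}_\infty$ (up to sign), so everything reduces to estimating the far-field limit of $g$ in terms of the size of the source. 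The source is supported in $\eta \lesssim 1/\Eps$ and is of size $\Eps$ times $\del_\theta f$; the trouble is precisely that $\del_\theta f$ is \emph{not} bounded — by the cited results on the classical half-space equation (\cite{CZ1967,CFLT2016}, etc.) $f$ has a logarithmic singularity in $\theta$ as $\theta \to 0^-$, so $\del_\theta f \sim 1/\theta$ near the grazing set. Naively multiplying $\Eps$ by a logarithmically/inverse-linearly singular quantity and integrating over $\eta \lesssim 1/\Eps$ would only give $\BigO(\Eps\log\Eps)$ at best away from grazing and something worse near $\theta = 0$, which is not good enough and not even obviously finite.

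This is where the first-order regularization announced in the introduction enters, and it is the heart of the argument. Instead of working with $f$ directly, I would split $f = f^{\mathrm{reg}} + f^{\mathrm{sing}}$, where $f^{\mathrm{sing}}$ is an explicit, simple profile carrying the logarithmic singularity (built from the known asymptotics of the Chandrasekhar-type $H$-function / the explicit near-grazing behaviour of the Milne problem) and $f^{\mathrm{reg}}$ is Lipschitz in $\theta$, uniformly in $\eta$, with exponential relaxation to its own end-state. The contribution of $f^{\mathrm{reg}}$ to the source is then genuinely $\BigO(\Eps)$ pointwise with an integrable-in-$\eta$ bound (using the exponential decay of $\del_\theta f^{\mathrm{reg}}$ in $\eta$), so by the well-posedness / maximum-principle-type estimates for the half-space equation with source its contribution to $\vint{g}_\infty$ is $\BigO(\Eps)$. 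The contribution of $f^{\mathrm{sing}}$ has to be handled by hand: because $f^{\mathrm{sing}}$ is explicit one computes $\del_\theta f^{\mathrm{sing}}$ explicitly, and the source it produces, while singular near $\theta = 0$, is singular only like $\Eps/\theta$ (or $\Eps\log\theta$), which is integrable in $\theta$; splitting the $\theta$-integral defining $\vint{\,\cdot\,}$ at $|\theta| \sim \Eps^{1/3}$ — handling $|\theta| \lesssim \Eps^{1/3}$ by the crude $L^\infty$ bound on $g$ restricted to a thin grazing sector (measure $\Eps^{1/3}$) and $|\theta| \gtrsim \Eps^{1/3}$ by the quantitative source estimate (size $\Eps \cdot \Eps^{-1/3} = \Eps^{2/3}$ after accounting for the $\eta$-extent $\sim 1/\Eps$ balanced against the decay) — is exactly what produces the exponent $2/3$. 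The optimization of these two competing contributions over the cutoff location is the source of $\Eps^{2/3}$, consistent with the remark in the introduction that this rate may not be optimal.

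The remaining ingredients are standard but need to be set up carefully: (i) an a priori $L^\infty$ bound $\norm{g}_{L^\infty} \lesssim 1$ on the whole half-space, which follows from the $\BigO(1)$ comparison already established in \cite{WG2014} together with the stability of the classical equation; (ii) a quantitative estimate expressing $|\vint{g}_\infty|$ in terms of a weighted norm of the source, which I would obtain by testing the equation for $g$ against the known conservation/entropy structure of the half-space operator (the linear functional $\int \sin\theta\, g \,\dtheta$ is conserved by the homogeneous equation, so pairing it with the source and using exponential relaxation of the fluctuating part gives control of the limit); and (iii) the construction of the regular/singular splitting of $f$, which is the content of the regularization section and which I would simply cite once it is available. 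The main obstacle is item (iii) combined with making the $f^{\mathrm{sing}}$ source estimate sharp enough: one must know the singular profile of $f$ near $\theta = 0$ precisely enough (both its $\theta$-behaviour and its $\eta$-decay) to see that its contribution to the end-state, after the $\Eps^{1/3}$-splitting, is no worse than $\Eps^{2/3}$ — a weaker description of the singularity would degrade the exponent or break the argument entirely.
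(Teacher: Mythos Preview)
Your high-level picture --- regularize, then balance an $\BigO(\Eps/\alpha)$ source contribution against an $\BigO(\alpha^2)$ regularization error, optimized at $\alpha = \Eps^{1/3}$ --- is exactly the arithmetic behind the $\Eps^{2/3}$ rate, and the paper does precisely this optimization. But the implementation you describe diverges from the paper in two essential ways, and one of them is a genuine gap.

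First, the paper does \emph{not} split the original solution as $f = f^{\mathrm{reg}} + f^{\mathrm{sing}}$ with an explicit singular profile subtracted off. Instead it constructs a \emph{new} solution $\tilde f$ of the \emph{same} classical half-space equation, obtained by modifying the incoming data $h_0$ on the grazing sets $(0,2\alpha_\Eps)\cup(\pi-2\alpha_\Eps,\pi)$ so that the resulting $\tilde f$ is automatically Lipschitz (Lemma~\ref{lem:modification}). The point is that one never needs to know the singular asymptotics of $f$; one only needs a convex combination of two auxiliary solutions chosen so that $\tilde f(0,0^+) = \vint{\tilde f}(0)$, which kills the jump and makes $\del_\theta\tilde f,\del_\eta\tilde f = \BigO(1/\alpha_\Eps)$ with exponential decay in $\eta$. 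The comparison is then between $\tilde f$ and $f_\Eps$, not between $f$ and $f_\Eps$, so the source in the difference equation is $\Eps\cos\theta\,\del_\theta\tilde f$, which is genuinely bounded by $C\Eps/\alpha_\Eps$ --- no singular piece to handle by hand.

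Second, the mechanism linking the source bound to the end-state is not the vague pairing you describe in (ii). The paper uses two concrete identities: a $\sin^2\theta$-moment computation (Lemma~\ref{lem:conservation}) showing $\int f_\Eps(0,\theta)\sin^2\theta\,\rd\theta = \pi f_{\infty,\Eps} + \BigO(\Eps)$ (the corresponding identity for $f$ is exact), and an $L^1(|\sin\theta|\,\rd\theta)$ contraction at $\eta=0$ obtained by multiplying by $\Sgn(\hat f_\Eps)$ and integrating in $\eta$. Together these reduce $|f_\infty - f_{\infty,\Eps}|$ to boundary integrals $\int|\tilde f(0,\theta)-f(0,\theta)||\sin\theta|\,\rd\theta \leq \alpha_\Eps^2$ and $\int|\tilde f(0,\theta)-f_\Eps(0,\theta)||\sin\theta|\,\rd\theta \leq C(\Eps/\alpha_\Eps + \alpha_\Eps^2)$.

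The gap in your version is the treatment of $f^{\mathrm{sing}}$. You assert one can compute $\del_\theta f^{\mathrm{sing}}$ explicitly and split the $\theta$-integral at $\Eps^{1/3}$, using an $L^\infty$ bound on $g$ on the thin grazing sector. But restricting $g$ to a sector of measure $\Eps^{1/3}$ does not by itself bound the end-state contribution: the end-state is a single scalar determined by the full transport dynamics, and information from the grazing sector propagates into the bulk along characteristics. Without a conservation-law or flux identity (like the $\sin^2\theta$ moment) connecting the end-state to a boundary integral, your splitting argument does not close. The paper's route sidesteps this entirely by never letting a singular $\del_\theta f$ appear in the source.
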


\begin{rmk}
The convergence rate $\Eps^{2/3}$ may not be optimal.
\end{rmk}

\Ni {\it Notation.} In this paper we use $\Xi_1 = \BigO(\alpha)$ to denote the upper bound
\begin{align*}
    \abs{\Xi_1} \leq C_1 \alpha \,,
\end{align*}
where $C_1$ is independent of $\alpha$. This is different from the somewhat conventional notation that $\Xi_1 = \BigO(\alpha)$ means $\Xi_1$ is comparable with $\alpha$ in the way that it is bounded both from above and below by an order of $\alpha$. 
We also use $\Xi_2 = o(\alpha)$ to denote that 
\begin{align*}
    \frac{\abs{\Xi_2}}{\alpha} \to 0 
\quad
   \text{as $\alpha \to 0$.}
\end{align*}

\medskip

Before proving the main result, we first state a few lemmas. The first one shows an ``almost" conservation law for the $\Eps$-Milne system~\eqref{eq:kinetic-eps}-\eqref{eq:kinetic-eps-2}.

\begin{lem} \label{lem:conservation}
Let $f_\Eps$ be the solution to~\eqref{eq:kinetic-eps}-\eqref{eq:kinetic-eps-2}. Then 
we have
\begin{align*}
     \int_{-\pi}^\pi f_\Eps(0, \theta) \sin^2\theta \dtheta 
     = \int_{-\pi}^\pi f_{\infty, \Eps} \sin^2\theta \dtheta  + \BigO(\Eps)
     = \pi f_{\infty, \Eps} + \BigO(\Eps) \,.
\end{align*}
\end{lem}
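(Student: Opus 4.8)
The plan is to exploit the weak formulation of the $\Eps$-Milne equation against the test function $\sin^2\theta$, which is (up to the $\Eps$-perturbation) the quantity conserved in $\eta$ by the classical Milne equation. First I would multiply equation~\eqref{eq:kinetic-eps} by $\sin\theta$ and integrate over $\theta \in (-\pi,\pi)$. The collision term $\int_{-\pi}^{\pi}(f_\Eps - \vint{f_\Eps})\sin\theta \dtheta$ vanishes because $\int_{-\pi}^{\pi}\sin\theta\dtheta = 0$, so only the transport term $\sin\theta\,\del_\eta f_\Eps$ and the $\Eps$-correction term survive. The transport term gives $\frac{\rd}{\rd\eta}\int_{-\pi}^{\pi} f_\Eps \sin^2\theta\dtheta$, so we obtain
\begin{align*}
   \frac{\rd}{\rd \eta}\int_{-\pi}^{\pi} f_\Eps(\eta,\theta)\sin^2\theta\dtheta
   = \frac{\Eps\psi(\Eps\eta)}{1-\Eps\eta}\int_{-\pi}^{\pi} \cos\theta\sin\theta\,\del_\theta f_\Eps \dtheta \,.
\end{align*}
Integrating by parts in $\theta$ on the right (the boundary terms at $\pm\pi$ cancel by periodicity), $\int \cos\theta\sin\theta\,\del_\theta f_\Eps\dtheta = -\int (\cos^2\theta - \sin^2\theta) f_\Eps \dtheta = -\int \cos(2\theta) f_\Eps\dtheta$, so the right-hand side is $-\frac{\Eps\psi(\Eps\eta)}{1-\Eps\eta}\int_{-\pi}^{\pi}\cos(2\theta) f_\Eps\dtheta$.

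Next I would integrate this identity in $\eta$ from $0$ to $\infty$. The left side telescopes to $\int_{-\pi}^{\pi} f_{\infty,\Eps}\sin^2\theta\dtheta - \int_{-\pi}^{\pi} f_\Eps(0,\theta)\sin^2\theta\dtheta$, using $f_\Eps \to f_{\infty,\Eps}$ as $\eta\to\infty$ (one should check $\del_\eta f_\Eps$ is integrable, which follows from the exponential-type convergence to the end-state established in the well-posedness theory of~\cite{WG2014}, or can be absorbed into the error term if one is careful). Since $\psi$ is supported in $\Eps\eta \le 3/4$, the $\eta$-integral on the right is effectively over $0 \le \eta \le \frac{3}{4\Eps}$, where $\frac{1}{1-\Eps\eta} \le 4$ and $\psi$ is bounded by $1$; combined with an $L^\infty$ (or $L^1_\eta L^\infty_\theta$) bound on $f_\Eps$ — uniform in $\Eps$, again from~\cite{WG2014} — the right side is bounded by $\Eps \cdot \frac{3}{4\Eps}\cdot C = \BigO(\Eps)\cdot$? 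Here is the subtle point: naively the length of the support is $\BigO(1/\Eps)$, which would only give $\BigO(1)$. The resolution is that $\int_{-\pi}^{\pi}\cos(2\theta) f_\Eps(\eta,\theta)\dtheta$ itself decays in $\eta$ (the $\eta\to\infty$ limit $f_{\infty,\Eps}$ is $\theta$-independent, so $\int\cos(2\theta) f_{\infty,\Eps}\dtheta = 0$), and one needs a quantitative decay estimate for this moment against the weight $\frac{\Eps}{1-\Eps\eta}$. I expect one uses that $f_\Eps - f_{\infty,\Eps}$ decays like $e^{-c\eta}$ on the relevant scale (or like some negative power — this is presumably where the $\Eps^{2/3}$ rather than $\Eps$ enters in Theorem~\ref{thm:main}, though for this lemma an honest $\BigO(\Eps)$ should follow from $\int_0^\infty |f_\Eps - f_{\infty,\Eps}|\,\rd\eta = \BigO(1)$ times the $\Eps$ prefactor and boundedness of $\frac{\psi(\Eps\eta)}{1-\Eps\eta}$).

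Finally, the second equality $\int_{-\pi}^{\pi} f_{\infty,\Eps}\sin^2\theta\dtheta = \pi f_{\infty,\Eps}$ is immediate since $f_{\infty,\Eps}$ is constant in $\theta$ and $\int_{-\pi}^{\pi}\sin^2\theta\dtheta = \pi$. The main obstacle is the one flagged above: showing that the $\eta$-integral of the $\Eps$-correction term is genuinely $\BigO(\Eps)$ and not merely $\BigO(1)$, i.e. controlling $\int_0^\infty \frac{\Eps\,\psi(\Eps\eta)}{1-\Eps\eta}\bigl|\int_{-\pi}^{\pi}\cos(2\theta) f_\Eps\dtheta\bigr|\rd\eta$ uniformly in $\Eps$. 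This requires a decay-in-$\eta$ estimate for $f_\Eps$ toward its ($\theta$-independent) end-state that is uniform in $\Eps$ — the kind of estimate that the analysis of~\cite{CoronGolseSulem:88} and~\cite{WG2014} provides for Milne-type problems — so that the $1/\Eps$ length of the cutoff support is beaten by the smallness of $f_\Eps - f_{\infty,\Eps}$ on that scale.
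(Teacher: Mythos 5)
Your overall strategy --- test the equation against $\sin\theta$, integrate by parts in $\theta$, integrate in $\eta$ from $0$ to $\infty$, and beat the $\BigO(1/\Eps)$ length of $\mathrm{supp}\,\psi(\Eps\cdot)$ by noting that $\int_{-\pi}^{\pi}\cos(2\theta)f_\Eps\dtheta = \int_{-\pi}^{\pi}\cos(2\theta)(f_\Eps - f_{\infty,\Eps})\dtheta$ and invoking the uniform-in-$\Eps$ exponential decay $\abs{f_\Eps - f_{\infty,\Eps}}\le Ce^{-\kappa_0\eta}$ from \cite{WG2014} --- is exactly the mechanism of the paper's proof; the paper merely packages the same computation with an integrating factor $e^{-2V_\Eps}$ before isolating the decaying remainder. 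Your identification of the crux (the $\Eps$ prefactor versus the $1/\Eps$ support) and its resolution are both correct, and the lemma is indeed an honest $\BigO(\Eps)$ statement.

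There is, however, one genuine gap: your claim that the collision term $\int_{-\pi}^{\pi}(f_\Eps - \vint{f_\Eps})\sin\theta\dtheta$ vanishes ``because $\int_{-\pi}^{\pi}\sin\theta\dtheta=0$'' is false as stated. That cancellation only kills the $\vint{f_\Eps}$ piece; what survives is the flux $J(\eta)=\int_{-\pi}^{\pi}f_\Eps(\eta,\theta)\sin\theta\dtheta$, which is not zero for a generic function (take $f_\Eps=\sin\theta$: the integral is $\pi$). It does vanish, but only because solutions of the $\Eps$-Milne system satisfy the conservation property $J(\eta)\equiv 0$, which the paper recalls from \cite{WG2014} as the very first line of its proof. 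One can obtain it by integrating \eqref{eq:kinetic-eps} in $\theta$: the collision term drops, an integration by parts in the $\Eps$-term gives $\del_\eta J = (\del_\eta V_\Eps)\,J$, and then $J(\infty)=f_{\infty,\Eps}\int_{-\pi}^{\pi}\sin\theta\dtheta=0$ together with $V_\infty<\infty$ forces $J\equiv 0$. This is not a cosmetic omission: without it your $\eta$-integration picks up the extra term $\int_0^\infty J(\eta)\deta$, which carries no factor of $\Eps$ and would destroy the $\BigO(\Eps)$ bound, so the step must actually be supplied. (As a minor aside, the exponent $\Eps^{2/3}$ in Theorem~\ref{thm:main} does not originate in this lemma; it comes from optimizing $\Eps/\alpha_\Eps + \alpha_\Eps^2$ in the regularization argument, and the present lemma really does yield $\BigO(\Eps)$.)
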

\begin{proof}

Recall that $f_\Eps$ satisfies the conservation property \cite{WG2014}: 
\begin{align*}
    \vint{f_\Eps \sin\theta} 
    = \frac{1}{2\pi}\int_{-\pi}^\pi f_\Eps(\eta, \theta) \sin\theta \dtheta = 0 
\qquad
    \text{for any $\eta \geq 0$.}
\end{align*}
Multiplying $\sin\theta$ to~\eqref{eq:kinetic-eps} and integrating in $\theta$ gives
\begin{align} \label{eq:f-sin-square}
   \frac{\rd}{\rd \eta} \int_{-\pi}^\pi f_\Eps \sin^2\theta \dtheta
   - \frac{1}{2} \frac{\Eps \psi(\Eps \eta)}{1- \Eps \eta} 
     \int_{-\pi}^\pi \sin(2\theta) \frac{\del f_\Eps}{\del \theta} \dtheta = 0 \,.
\end{align}
The term $\int_{-\pi}^\pi \sin(2\theta) \frac{\del f_\Eps}{\del \theta} \dtheta$ can be re-written as
\begin{align*}
   \int_{-\pi}^\pi \sin(2\theta) \frac{\del f_\Eps}{\del \theta} \dtheta
& = -2 \int_{-\pi}^\pi f_\Eps(\eta, \theta) \cos(2\theta) \dtheta
 = -2 \int_{-\pi}^\pi f_\Eps(\eta, \theta) \vpran{1 - 2 \sin^2\theta} \dtheta
\\
& = -2 \int_{-\pi}^\pi f_\Eps \dtheta
       + 4 \int_{-\pi}^\pi f_\Eps \sin^2\theta \dtheta \,.
\end{align*}
Therefore \eqref{eq:f-sin-square} becomes
\begin{align} \label{eq:f-sin-square-1}
   \frac{\rd}{\rd \eta} \int_{-\pi}^\pi f_\Eps \sin^2\theta \dtheta
   - \frac{2\Eps \psi(\Eps \eta)}{1- \Eps \eta} 
     \int_{-\pi}^\pi f_\Eps \sin^2\theta \dtheta 
   =  - \frac{\Eps \psi(\Eps \eta)}{1- \Eps \eta} 
       \int_{-\pi}^\pi f_\Eps \dtheta \,.
\end{align}
Using the notation in \cite{WG2014}, we denote 
\begin{align} \label{def:V-Eps}
   \del_\eta V_\Eps(\eta) = \frac{\Eps \psi(\Eps \eta)}{1- \Eps \eta} \,,
\qquad
   V_\Eps(0) = 0 \,.
\end{align}
Then $0 \leq V_\Eps \leq V_\infty < \infty$ and 
\begin{align} \label{eq:V-infty}
   V_\infty = \int_0^\infty \frac{\Eps \psi(\Eps \eta)}{1- \Eps \eta}  \deta
   = \int_0^\infty \frac{\psi(\eta)}{1-\eta}  \deta
\qquad
   \text{is bounded and independent of $\Eps$.}
\end{align}
We further introduce the notation
\begin{align*}
    H(\eta) = \frac{1}{2\pi} \int_{-\pi}^\pi f_\Eps(\eta, \theta) \sin^2\theta \dtheta \,.
\end{align*}
Then \eqref{eq:f-sin-square-1} becomes
\begin{align*}
    \frac{\rd}{\rd \eta} H(\eta) - 2 \vpran{\del_\eta V_\Eps} H(\eta)
    = \vpran{-\del_\eta V_\Eps} \vint{f_\Eps} \,.
\end{align*}
Therefore, 
\begin{align*}
    \frac{\rd}{\rd \eta} \vpran{e^{-2V_\Eps} H(\eta)}
    = e^{-2V_\Eps} \vpran{-\del_\eta V_\Eps} \vint{f_\Eps} \,.
\end{align*}
Integrating from $\eta = 0$ to $\eta = \infty$ gives
\begin{align} \label{H-infty}
    H_\infty 
    = e^{2V_\infty} 
        \vpran{H_0 - \int_0^\infty e^{-2V_\Eps} \del_\tau V_\Eps \vint{f_\Eps}(\tau) \dtau} \,,
\end{align}
where
\begin{align*}
    H_\infty = \frac{1}{2\pi} \int_{-\pi}^\pi f_{\infty, \Eps} \sin^2\theta \dtheta
    = \frac{1}{2} f_{\infty, \Eps} \,,
\qquad
    H_0 = \frac{1}{2\pi} \int_{-\pi}^\pi f_{\Eps}(0, \theta) \sin^2\theta \dtheta \,.
\end{align*}
By \eqref{eq:V-infty} the constant $V_\infty$ only depends on the choice of the cutoff function $\psi$. 
The integral term can be reformulated as
\begin{align*}
   \int_0^\infty e^{-2V_\Eps} \del_\tau V_\Eps \vint{f_\Eps}(\tau) \dtau
&= f_{\infty, \Eps} \int_0^\infty e^{-2V_\Eps} \del_\tau V_\Eps \dtau
   + \int_0^\infty e^{-2V_\Eps} \del_\tau V_\Eps \vint{f_\Eps - f_{\infty, \Eps}}(\tau) \dtau
\\
&= \frac{1}{2} f_{\infty, \Eps} \vpran{1 - e^{-2 V_\infty}}
   + \int_0^\infty e^{-2V_\Eps} \del_\tau V_\Eps \vint{f_\Eps - f_{\infty, \Eps}}(\tau) \,.
\end{align*}
Therefore, the right-hand side of~\eqref{H-infty} satisfies
\begin{align*}
& \quad \,
   e^{2V_\infty} \vpran{H_0 - \int_0^\infty e^{-2V_\Eps} \del_\tau V_\Eps \vint{f_\Eps}(\tau) \dtau}
\\
&= e^{2V_\infty} \vpran{H_0 - \frac{1}{2} f_{\infty, \Eps}}
   + \frac{1}{2} f_{\infty, \Eps}
   - e^{2V_\infty} \int_0^\infty e^{-2V_\Eps} \del_\tau V_\Eps \vint{f_\Eps - f_{\infty, \Eps}}(\tau) \,.
\end{align*}
By \eqref{H-infty} and $H_\infty = \frac{1}{2} f_{\infty, \Eps}$, we then have 
\begin{align*}
    H_0 - \frac{1}{2} f_{\infty, \Eps}
    = \int_0^\infty e^{-2V_\Eps} \del_\tau V_\Eps \vint{f_\Eps - f_{\infty, \Eps}}(\tau) \,.
\end{align*}
Denote
\begin{align*}
   G = \int_0^\infty e^{-2V_\Eps} \del_\tau V_\Eps \vint{f_\Eps - f_{\infty, \Eps}}(\tau) \,.
\end{align*}
It has been shown in \cite{WG2014} that there exists $\kappa_0 > 0$ such that 
\begin{align*}
     \abs{f_\Eps - f_{\infty, \Eps}} \leq C e^{-\kappa_0 \eta} \,,
\end{align*}
where $C$ is independent of $\Eps$.  Let $d_0 > 0$ be a constant such that 
\begin{align*}
   1 - \Eps \eta \geq d_0 > 0 
\qquad
   \text{on $\mathrm{supp}\, \psi(\Eps \eta)$.}
\end{align*}
We have
\begin{align*}
   |G|
\leq \norm{\psi}_{L^\infty} \frac{\Eps}{d_0} \int_0^\infty e^{-\alpha_1 \eta} \deta
\leq C \Eps \,,
\end{align*}
where $C$ is independent of $\Eps$. Thus,
\begin{align*}
     \abs{H_0 - \frac{1}{2} f_{\infty, \Eps}} = \abs{G} \leq C \Eps \,.
\end{align*}
This is equivalent to 
\begin{align*}
    \int_{-\pi}^\pi f_\Eps(0, \theta) \sin^2\theta \dtheta
 = \pi f_{\infty, \Eps} + \BigO(\Eps)
 = \int_{-\pi}^\pi f_{\infty, \Eps} \sin^2\theta \dtheta + \BigO(\Eps) \,,
\end{align*}
which proves the desired ``almost" conservation property.
\end{proof}

Next we show a stability result for both the classical half-space and the $\Eps$-Milne equation. 
\begin{lem} \label{lem:stability}
Let $f, f_\Eps$ be the solutions to equations~\eqref{eq:kinetic-classical}-\eqref{eq:kinetic-classical-2} and~\eqref{eq:kinetic-eps}-\eqref{eq:kinetic-eps-2} respectively. Then there exists a constant $C$ independent of $\Eps$ such that
\begin{align*}
   \abs{f_{\infty, \Eps}}
\leq 
   \frac{2}{\pi}\int_0^\pi \abs{f_\Eps(0, \theta)} \sin\theta \dtheta + C \Eps \,,
\qquad
   \Eps > 0 \,,
\end{align*}
and for the classical half-space equation it holds that
\begin{align*}
   \abs{f_{\infty}}
\leq 
   \frac{2}{\pi} \int_0^\pi \abs{f(0, \theta)} \sin\theta \dtheta \,.
\end{align*}

\end{lem}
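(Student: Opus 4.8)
The plan is to derive both inequalities from the ``almost'' conservation identity for the second moment combined with the exponential decay estimate $\abs{f_\Eps - f_{\infty,\Eps}} \leq C e^{-\kappa_0 \eta}$ (and its analogue for the classical equation). The key observation is that $f_{\infty,\Eps}$ is a constant in $\theta$, so $\pi f_{\infty,\Eps} = \int_{-\pi}^\pi f_{\infty,\Eps} \sin^2\theta \dtheta$, and by Lemma~\ref{lem:conservation} this equals $\int_{-\pi}^\pi f_\Eps(0,\theta) \sin^2\theta \dtheta + \BigO(\Eps)$. Thus it suffices to bound $\int_{-\pi}^\pi f_\Eps(0,\theta) \sin^2\theta \dtheta$ by $2\int_0^\pi \abs{f_\Eps(0,\theta)} \sin\theta \dtheta$ plus a controlled error. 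On the incoming set $\sin\theta > 0$ (i.e.\ $\theta \in (0,\pi)$) we have the explicit data $f_\Eps(0,\theta) = h_0(\theta)$, but we do not want the bound to involve $h_0$ directly on the outgoing set; instead we want to re-express the outgoing contribution using the interior state.

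First I would split $\int_{-\pi}^\pi f_\Eps(0,\theta)\sin^2\theta \dtheta$ into the incoming part $\theta \in (0,\pi)$ and the outgoing part $\theta \in (-\pi,0)$. On the outgoing part, I would use the exponential decay to write $f_\Eps(0,\theta) = f_{\infty,\Eps} + \bigl(f_\Eps(0,\theta) - f_{\infty,\Eps}\bigr)$, where the second term is $\BigO(1)$ pointwise but this is too crude; the sharper route is to trace back along characteristics or, more simply, to use a second moment-type identity restricted to $\theta \in (-\pi,0)$ together with decay. Actually the cleanest approach: apply the conservation identity of Lemma~\ref{lem:conservation} and also multiply~\eqref{eq:kinetic-eps} by $\sin\theta\,\mathbf{1}_{\sin\theta<0}$-type weights is awkward because of the $\del_\theta$ term. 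Instead I would estimate directly: since $\int_{-\pi}^\pi f_\Eps(0,\theta)\sin^2\theta\dtheta = \pi f_{\infty,\Eps} + \BigO(\Eps)$ and $\sin^2\theta \leq \abs{\sin\theta}$, we get $\pi\abs{f_{\infty,\Eps}} \leq \int_{-\pi}^\pi \abs{f_\Eps(0,\theta)}\abs{\sin\theta}\dtheta + C\Eps = \int_0^\pi \abs{f_\Eps(0,\theta)}\sin\theta\dtheta + \int_{-\pi}^0 \abs{f_\Eps(0,\theta)}\abs{\sin\theta}\dtheta + C\Eps$. The remaining task is to control the outgoing integral $\int_{-\pi}^0 \abs{f_\Eps(0,\theta)}\abs{\sin\theta}\dtheta$ by the incoming one.

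For the outgoing integral I would integrate the equation along characteristics. Fix $\theta$ with $\sin\theta < 0$; then $\del_\eta f_\Eps$ has a favorable sign structure and integrating~\eqref{eq:kinetic-eps} from $\eta=\infty$ back to $\eta = 0$ (along the modified characteristics that include the $\del_\theta$ drift coming from the $V_\Eps$ term) expresses $f_\Eps(0,\theta)$ as a weighted average of $\vint{f_\Eps}$ along the trajectory plus the limiting value $f_{\infty,\Eps}$; combined with $\abs{\vint{f_\Eps}} = \abs{\vint{f_\Eps - f_{\infty,\Eps}}} + \abs{f_{\infty,\Eps}}$ and the decay estimate, one bounds $\abs{f_\Eps(0,\theta)}$ for outgoing $\theta$ in terms of $\abs{f_{\infty,\Eps}}$ plus quantities controlled by the incoming data. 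This produces an inequality of the form $\int_{-\pi}^0 \abs{f_\Eps(0,\theta)}\abs{\sin\theta}\dtheta \leq \pi\lambda \abs{f_{\infty,\Eps}} + \int_0^\pi \abs{f_\Eps(0,\theta)}\sin\theta\dtheta + C\Eps$ for some $\lambda < 1$ (the gain $\lambda<1$ being exactly what the exponential decay buys us), and rearranging gives $\abs{f_{\infty,\Eps}} \leq \frac{2}{\pi}\int_0^\pi \abs{f_\Eps(0,\theta)}\sin\theta\dtheta + C\Eps$. For the classical equation the same argument applies verbatim with $V_\Eps \equiv 0$ (straight characteristics), and since Lemma~\ref{lem:conservation} holds exactly (no $\BigO(\Eps)$ correction) for~\eqref{eq:kinetic-classical}, the error term disappears, giving the second inequality.

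The main obstacle I anticipate is making the characteristic argument for the outgoing flux rigorous in the $\Eps$-Milne case, because the $\del_\theta$ drift term means characteristics are curved and $\theta$ varies along them; one must check that the relevant trajectory starting from $\eta=0$ with $\sin\theta<0$ stays in the region $\sin\theta<0$ (or track how it crosses), and that the cutoff $\psi$ confines the drift to a bounded $\eta$-range so the exponential-in-$\eta$ decay of $f_\Eps - f_{\infty,\Eps}$ still yields a uniform contraction constant $\lambda<1$ independent of $\Eps$. An alternative that sidesteps the characteristic geometry is to note that for $\theta\in(-\pi,0)$, $f_\Eps(0,\theta)$ differs from its incoming counterpart only through the collision term $\vint{f_\Eps}$, so $\abs{f_\Eps(0,\theta) - f_{\infty,\Eps}} \leq \int_0^{\eta^*(\theta)} \abs{\vint{f_\Eps} - f_{\infty,\Eps}}\, \frac{\ud s}{\abs{\sin\theta}}$-type bounds combined with the decay estimate; integrating the resulting bound against $\abs{\sin\theta}$ over $(-\pi,0)$ keeps the singular $1/\abs{\sin\theta}$ integrable and produces the $C\Eps$ (resp.\ $0$) error and the contraction. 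Either way, the crucial input is the already-established exponential decay from~\cite{WG2014}, and the bookkeeping of the $\BigO(\Eps)$ terms from Lemma~\ref{lem:conservation}.
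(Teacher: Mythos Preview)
Your overall structure is right: reduce to Lemma~\ref{lem:conservation} and then bound the outgoing weighted integral $\int_{-\pi}^{0}\abs{f_\Eps(0,\theta)}\abs{\sin\theta}\dtheta$ by the incoming one. But the characteristic/decay mechanism you propose for this last step does not close. Tracing back along characteristics expresses $f_\Eps(0,\theta)-f_{\infty,\Eps}$ for outgoing $\theta$ as a weighted integral of $\vint{f_\Eps-f_{\infty,\Eps}}(s)$, and the only handle you invoke on that is the pointwise decay $\abs{f_\Eps-f_{\infty,\Eps}}\le Ce^{-\kappa_0\eta}$. The constant $C$ in that decay estimate scales with $\norm{h_0}_{L^\infty}$, not with $\int_0^\pi\abs{h_0}\sin\theta\dtheta$, so what you actually get is a bound of the form $\int_{-\pi}^0\abs{f_\Eps(0,\theta)}\abs{\sin\theta}\dtheta \le 2\abs{f_{\infty,\Eps}} + C\norm{h_0}_{L^\infty}$, which neither produces the claimed $\lambda<1$ contraction nor controls the outgoing flux by the incoming $L^1(\sin\theta\dtheta)$ quantity. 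Even if a contraction could be salvaged, absorbing $\pi\lambda\abs{f_{\infty,\Eps}}$ would yield the constant $\tfrac{2}{\pi(1-\lambda)}$, not the stated $\tfrac{2}{\pi}$.

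The paper's argument avoids all of this with a one-line entropy estimate: multiply~\eqref{eq:kinetic-eps} by $\Sgn(f_\Eps)$ and integrate in~$\theta$. Using that the $\del_\theta$ term integrates to a multiple of $\int\sin\theta\abs{f_\Eps}\dtheta$ and that $\Sgn(f_\Eps)\vint{f_\Eps}\le\vint{\abs{f_\Eps}}$, one gets
\[
\frac{\rd}{\rd\eta}\!\vpran{e^{-V_\Eps(\eta)}\int_{-\pi}^{\pi}\sin\theta\,\abs{f_\Eps}\dtheta}\le 0\,.
\]
Since the bracketed quantity vanishes at $\eta=\infty$, it is nonnegative at $\eta=0$, i.e.\ $\int_{-\pi}^0\abs{\sin\theta}\abs{f_\Eps(0,\theta)}\dtheta \le \int_0^\pi\sin\theta\abs{f_\Eps(0,\theta)}\dtheta$ directly. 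Combined with $\sin^2\theta\le\abs{\sin\theta}$ and Lemma~\ref{lem:conservation} this gives the exact constant $\tfrac{2}{\pi}$ with no contraction bookkeeping and no characteristic geometry. For the classical equation the same computation with $V_\Eps\equiv 0$ and the exact conservation law removes the $\BigO(\Eps)$ term.
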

\begin{proof}
Multiply $\Sgn(f_\Eps)$ to equation~\eqref{eq:kinetic-eps} and integrate in $\theta$. Then we have 
\begin{align*}
    \frac{\rd}{\rd \eta} \int_{-\pi}^\pi \sin\theta \abs{f_\Eps} \dtheta
    - \vpran{\del_\eta V_\Eps} \int_{-\pi}^\pi \sin\theta \abs{f_\Eps} \dtheta
\leq 0 \,.
\end{align*}
Therefore, 
\begin{align*}
   \frac{\rd}{\rd \eta} \vpran{e^{-V_\Eps} \int_{-\pi}^\pi \sin\theta \abs{f_\Eps} \dtheta} \leq 0 \,,
\end{align*}
which gives
\begin{align*} 
    \int_{-\pi}^\pi \sin\theta \abs{f_\Eps(0, \theta)} \dtheta \geq 0 \,.
\end{align*}
Thus we have
\begin{align} \label{bound:f-neg-L-1}
   0 \leq \int_{-\pi}^0 \abs{\sin\theta} \abs{f_\Eps(0, \theta)} \dtheta
   \leq \int_0^\pi \sin\theta \abs{f_\Eps(0, \theta)} \dtheta \,.
\end{align}
This in particular shows that
\begin{align*}
   \abs{\int_{-\pi}^\pi  f_\Eps (0, \theta) \sin^2\theta \dtheta}
\leq
  \int_{-\pi}^\pi  \abs{f_\Eps (0, \theta)} \abs{\sin\theta} \dtheta
\leq
  2\int_0^\pi \abs{f_\Eps (0, \theta)} \sin\theta \dtheta \,.
\end{align*}
By the ``almost" conservation law in Lemma~\ref{lem:conservation}, we have
\begin{align*}
   \abs{f_{\infty, \Eps}} 
 = \frac{1}{\pi}\abs{\int_{-\pi}^\pi f_\Eps(0, \theta) \sin^2\theta \dtheta} + \BigO(\Eps)
 \leq 
  \frac{2}{\pi} \int_0^\pi \abs{f_\Eps (0, \theta)} \sin\theta \dtheta + \BigO(\Eps) \,.
\end{align*}
For the classical half-space equation, the estimates are similar and we only need to remove the error term $\BigO(\Eps)$ since the strict conservation holds. 
\end{proof}

The main reason that $f_\Eps$ has a finite difference from $f$ near the boundary is because $f$ has insufficient regularity in terms of $\eta$ and $\theta$. Specifically, it has been shown \cite{WG2014} that in general $\frac{\del f}{\del \eta}$ is not uniformly bounded. In the following lemma, we show that by slightly changing the incoming data, we can find a solution $\tilde f$ to the classical half-space equation such that $\tilde f \in W^{1, \infty}(\deta\dtheta)$.
Without loss of generality, we assume that the incoming data $h_0$ is not a constant function and
\begin{align} \label{assump:h-0}
    0 \leq h_0 \leq 1 \,,
\qquad
    \theta \in (0, \pi) \,.
\end{align}

\begin{lem} \label{lem:modification}
Let $f$ be the solution to~\eqref{eq:kinetic-classical}-~\eqref{eq:kinetic-classical-2}. Then for any $0 < \alpha_\Eps < \pi/4$, 
\medskip

\Ni (a) the classical half-space equation~\eqref{eq:kinetic-classical} has a solution $\tilde f \in W^{1, \infty}(\deta\dtheta)$ with a modified incoming data $\tilde h_0(\theta)$ such that 
\begin{align} \label{ineq:infty-difference}
   \int_{-\pi}^\pi \abs{\tilde f(0, \theta) - f(0, \theta)} \abs{\sin\theta} \dtheta
\leq \alpha_\Eps^2 \,,
\end{align}
\Ni (b) There exist two constants $C > 0$ and $0 < \kappa_0 < 1$ independent of $\Eps$ such that we have the bounds
\begin{align} \label{bound:tildef-eta}
   \norm{e^{\kappa_0 \eta} \frac{\del \tilde f}{\del \eta}}_{L^\infty(\deta\dtheta)}
  +    \norm{e^{\kappa_0 \eta} \frac{\del \tilde f}{\del \theta}}_{L^\infty(\deta\dtheta)}
\leq 
   \frac{C}{\alpha_\Eps} \,.
\end{align}
 \end{lem}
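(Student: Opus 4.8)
The plan is to obtain $\tilde f$ by smoothing the incoming data near the grazing set $\theta = 0^+$ and near $\theta = \pi^-$, where the known singularity of $f$ originates. Concretely, I would fix a cutoff scale $\alpha_\Eps$ and modify $h_0$ only on $(0,\alpha_\Eps)\cup(\pi-\alpha_\Eps,\pi)$, replacing it with a function $\tilde h_0$ that is Lipschitz with constant $\BigO(1/\alpha_\Eps)$, that still satisfies $0\le\tilde h_0\le 1$, and that agrees with $h_0$ outside those intervals. The simplest choice is to freeze $\tilde h_0$ to its boundary value: near $\theta=0^+$ set $\tilde h_0(\theta)$ to interpolate linearly (or via a smooth profile) between $h_0(\alpha_\Eps)$ and a constant on $(0,\alpha_\Eps)$; similarly near $\pi$. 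Since $|h_0|\le 1$ and we change it on a set of measure $2\alpha_\Eps$ where additionally $|\sin\theta|\le\sin\alpha_\Eps\le\alpha_\Eps$, we get
\begin{align*}
   \int_{-\pi}^\pi \abs{\tilde f(0,\theta)-f(0,\theta)}\,\abs{\sin\theta}\dtheta
   \le \int_0^\pi \abs{\tilde h_0-h_0}\,\abs{\sin\theta}\dtheta
   \le 2\cdot 2\alpha_\Eps\cdot\alpha_\Eps \le C\alpha_\Eps^2,
\end{align*}
after relabeling $\alpha_\Eps$ by a constant — giving (a). Here one uses that the incoming trace of the half-space solution equals the prescribed data, and the outgoing trace is controlled by the incoming one through the explicit solution formula / the $L^1(\abs{\sin\theta}\dtheta)$ stability already used in Lemma~\ref{lem:stability}.

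For part (b), the mechanism is that the $\eta$- and $\theta$-derivatives of a half-space solution blow up only through the factor $1/\sin\theta$ multiplying $\vint{f}-f$ when one differentiates the transport equation; a Lipschitz incoming datum that is \emph{constant in a neighborhood of $\theta=0$} removes the logarithmic obstruction, and the surviving bound scales like the Lipschitz constant of the data, i.e. $\BigO(1/\alpha_\Eps)$. So the key steps are: (i) write $\tilde f$ via the characteristics of \eqref{eq:kinetic-classical}, with source $\vint{\tilde f}$, and show $\vint{\tilde f}$ is itself Lipschitz in $\eta$ with the claimed exponential weight — this is a fixed-point/bootstrap argument on the Milne-type integral equation, using the exponential decay $\abs{\tilde f - \tilde f_\infty}\le Ce^{-\kappa_0\eta}$ from \cite{WG2014} (which applies since $\tilde h_0$ is an admissible datum); (ii) differentiate the equation in $\eta$: $\sin\theta\,\del_\eta(\del_\eta\tilde f) + \del_\eta\tilde f - \vint{\del_\eta\tilde f} = 0$ with boundary value $\del_\eta\tilde f|_{\eta=0} = (\vint{\tilde h_0}-\tilde h_0)/\sin\theta$ on $\sin\theta>0$, which is bounded by $C/\alpha_\Eps$ because $\tilde h_0$ is Lipschitz-$\BigO(1/\alpha_\Eps)$ and constant for $\theta\in(0,\alpha_\Eps)$ so the numerator vanishes to first order where the denominator is small; then apply the maximum-principle / $L^\infty$ stability for the half-space operator together with the exponential decay to propagate the weighted bound; (iii) for the $\theta$-derivative, differentiate in $\theta$ and absorb the extra $\cos\theta\,\del_\eta\tilde f$ term using the bound from (ii), again closing with the weighted maximum principle.

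The main obstacle I anticipate is step (i)–(ii): making rigorous that freezing the data near the grazing angles genuinely upgrades $\del_\eta\tilde f$ from merely $L^\infty_{loc}$ (or worse) to globally bounded with an exponential weight, rather than just moving the singularity. This requires a careful treatment of the characteristics that graze the boundary ($\sin\theta$ small): one must show that the contribution of the source $\vint{\tilde f}$ along such characteristics, divided by $\sin\theta$, stays bounded — which hinges precisely on the cancellation $\vint{\tilde f}(\eta,\cdot) - \tilde f(\eta,\theta) = \BigO(\sin\theta)$ uniformly, coming from the boundary condition being constant in $\theta$ on $(0,\alpha_\Eps)$ and the regularity of $\vint{\tilde f}$ established in the bootstrap. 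Tracking the $1/\alpha_\Eps$ dependence through this argument, and verifying it does not secretly degrade to $1/\alpha_\Eps^2$ or pick up a logarithm, is the delicate bookkeeping part; everything else (the measure estimate in (a), the exponential decay input, the weighted maximum principle) is standard given the cited results.
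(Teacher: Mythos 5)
There is a genuine gap in your construction of $\tilde h_0$, and it sits exactly at the heart of the lemma. The boundary datum of $\del_\eta \tilde f$ is $\bigl(\bigl\langle \tilde f\bigr\rangle(0) - \tilde h_0(\theta)\bigr)/\sin\theta$, where $\bigl\langle \tilde f\bigr\rangle(0)$ is the average of the \emph{full} trace over $\theta \in (-\pi,\pi)$, outgoing part included — a quantity determined by the solution, not by the data. If you simply freeze $\tilde h_0$ to some a priori constant $c$ (say $h_0(\alpha_\Eps)$) on $(0,\alpha_\Eps)$, the numerator near $\theta = 0$ is the constant $\bigl\langle \tilde f\bigr\rangle(0) - c$, which is generically nonzero; it does \emph{not} ``vanish to first order where the denominator is small,'' and $\del_\eta \tilde f\big|_{\eta=0}$ still blows up like $1/\sin\theta$. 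The same objection applies to your claimed cancellation $\bigl\langle\tilde f\bigr\rangle - \tilde f = \BigO(\sin\theta)$: making the data constant near $\theta=0$ only forces $\tilde f(0,\theta)\equiv c$ there, not $c = \bigl\langle\tilde f\bigr\rangle(0)$. This is precisely why the jump/logarithmic singularity of the half-space solution is \emph{generic} and cannot be removed by local smoothing of the data alone — the value of the constant must be tuned to a solution-dependent target.

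The paper resolves this with a fixed-point device your proposal is missing: take $\phi_1$ frozen to $0$ and $\phi_2$ frozen to $1$ on $[0,\alpha_\Eps]\cup[\pi-\alpha_\Eps,\pi]$ (both equal to $h_0$ on $[2\alpha_\Eps,\pi-2\alpha_\Eps]$), let $f_1,f_2$ be the corresponding solutions, and observe via the maximum principle that $f_1(0,0^+)-\vint{f_1(0,\cdot)}<0$ while $f_2(0,0^+)-\vint{f_2(0,\cdot)}>0$. By linearity and the intermediate value theorem there is $\lambda_0\in(0,1)$ so that $\tilde f = \lambda_0 f_1 + (1-\lambda_0)f_2$ has $\tilde f(0,\theta)\equiv \bigl\langle\tilde f\bigr\rangle(0)$ on the frozen intervals; only then does the numerator vanish identically near the grazing angles, giving the $C/\alpha_\Eps$ bound on the incoming datum of $\del_\eta\tilde f$ and hence, by the maximum principle and the decay estimate of \cite{WG2014}, part (b) for the $\eta$-derivative. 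Your part (a) estimate and the use of the $L^1(\abs{\sin\theta}\dtheta)$ stability to control the outgoing trace are fine, and your plan for the $\theta$-derivative (differentiate the equation and absorb $\cos\theta\,\del_\eta\tilde f$ as a source) is a workable alternative to the paper's route via the explicit Duhamel representation — but without the convex-combination tuning the whole argument does not get off the ground.
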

\begin{proof}
(a) We will slightly change the incoming data for $f$ near $\theta = 0$ to obtain the desired $\tilde f$.
Note that if $\tilde f$ solves the classical half-space equation~\eqref{eq:kinetic-classical}, then $\frac{\del \tilde f}{\del \eta}$ satisfies
\begin{align} 
    \sin\theta \frac{\del}{\del \eta} \vpran{\frac{\del \tilde f}{\del \eta}}
    &+ \frac{\del \tilde f}{\del \eta}
    - \vint{\frac{\del \tilde f}{\del \eta}} = 0 \,,  \nn
\\
   \frac{\del \tilde f}{\del \eta} \Big|_{\eta = 0} 
   = &-\frac{\tilde f(0, \theta) -  \bigl\langle\tilde f\bigr\rangle(0)}{\sin\theta} \,,
\qquad
   \sin\theta > 0 \,. \label{eq:tilde-f}
\\
   \frac{\del \tilde f}{\del \eta} &\to 0
\qquad \hspace{2.5cm}
   \text{as $\eta \to \infty$.} \nn
\end{align}
The end-state of $\frac{\del \tilde f}{\del \eta}$ is zero because for any $\theta \neq 0$, we have
\begin{align*}
    \frac{\del \tilde f}{\del \eta} 
    = \frac{\bigl\langle\tilde f\bigr\rangle - \tilde f}{\sin\theta} 
    \to 0
\qquad
    \text{as $\eta \to \infty$. }
\end{align*}
By the maximum principle for the classical half-space equation, in order to achieve that $\frac{\del \tilde f}{\del \eta} \in L^\infty(\deta\dtheta)$, we only need to make sure that 
\begin{align} \label{bound:tildef-bdry}
    \frac{\tilde f(0, \theta) -  \bigl\langle\tilde f\bigr\rangle(0)}{\sin\theta}
 \in L^\infty(0, \pi) \,.
\end{align}
To this end, we first construct two auxilliary functions. For any given $\alpha_\Eps > 0$, 
let 
\begin{align*}
    \phi_1(\theta) 
 = \begin{cases}
       0, & \text{for $\theta \in [0, \alpha_\Eps] \cup [\pi - \alpha_\Eps, \pi]$}, \\[2pt]
       h_0(\theta), & \text{for $\theta \in [2\alpha_\Eps, \pi - 2\alpha_\Eps]$}, \\[2pt]
       \text{Lipschitz}, & \text{for $\theta \in [0, \pi]$}, 
    \end{cases}
\qquad
    0 \leq \phi_1 \leq 1\,,
\end{align*}
and
\begin{align*}
    \phi_2(\theta) 
 = \begin{cases}
       1, & \text{for $\theta \in [0, \alpha_\Eps] \cup [\pi - \alpha_\Eps, \pi]$}, \\[2pt]
       h_0(\theta), & \text{for $\theta \in [2\alpha_\Eps, \pi - 2\alpha_\Eps]$}, \\[2pt]
       \text{Lipschitz}, & \text{for $\theta \in [0, \pi]$},
    \end{cases}
\qquad
    0 \leq \phi_2 \leq 1\,.
\end{align*}
Let $f_1, f_2$ be the solutions to the half-space equation~\eqref{eq:kinetic-classical} with incoming data $\phi_1, \phi_2$ respectively. Then by the maximum principle again, we have at $\eta = 0$,
\begin{align*}
    f_1(0, 0^+) - \vint{f_1(0, \cdot)} = - \vint{f_1(0, \cdot)} < 0 \,,
\\
    f_2(0, 0^+) - \vint{f_2(0, \cdot)} = 1 - \vint{f_2(0, \cdot)} > 0 \,.
\end{align*}
Therefore, there exists a constant $0 < \lambda_0 < 1$ such that
\begin{align*}
    \lambda_0 \vpran{f_1(0, 0^+) - \vint{f_1(0, \cdot)}}
    + (1 - \lambda_0)
    \vpran{f_2(0, 0^+) - \vint{f_2(0, \cdot)}} = 0 \,.
\end{align*}
Let $\tilde f = \lambda_0 f_1 + (1 - \lambda_0) f_2$. Then $\tilde f$ satisfies
\begin{align}
   \sin\theta \frac{\del \tilde f}{\del \eta} 
   &+ \tilde f - \bigl\langle\tilde f\bigr\rangle = 0 \,,  \label{eq:kinetic-classical-tilde}
\\
   \tilde f \big|_{\eta=0} 
   &= \lambda_0 \phi_1(\theta) + (1-\lambda_0) \phi_2(\theta)
   \Denote \tilde h_0(\theta) \,,  \label{eq:kinetic-classical-1-tilde}
\qquad
   \sin\theta > 0 \,,
\\
   &\tilde f \to  \, \tilde f_\infty   
\qquad \hspace{3.6cm}
   \text{\qquad as $\eta \to \infty$,}  \label{eq:kinetic-classical-2-tilde}
\end{align}
where $\tilde f_\infty$ is constant in $\eta, \theta$. Moreover, $\tilde f$ satisfies that 
\begin{align*}
     \tilde f(0, \theta) \equiv 1 - \lambda_0 = \bigl\langle\tilde f\bigr\rangle(0)
\qquad
     \text{for $\theta \in [0, \alpha_\Eps] \cup [\pi - \alpha_\Eps, \pi]$} \,.
\end{align*}
This in particular shows that 
\begin{align*}
    \frac{\del \tilde f}{\del \eta} \Big|_{\eta=0}
    = -\frac{\tilde f(0, \theta) -  \bigl\langle\tilde f\bigr\rangle(0)}{\sin\theta}    
   = 0 
\qquad
     \text{for $\theta \in [0, \alpha_\Eps] \cup [\pi - \alpha_\Eps, \pi]$} \,.
\end{align*}
By the half-space equation for $\tilde f$ we also have 
\begin{align*}
   \abs{-\frac{\tilde f(0, \theta) -  \bigl\langle\tilde f\bigr\rangle(0)}{\sin\theta}}
\leq 
   \frac{1}{\sin(\alpha_\Eps)}
\qquad
     \text{for $\theta \in [\alpha_\Eps, \pi - \alpha_\Eps]$} \,.
\end{align*}
Thus,
\begin{align} \label{bound:incoming-Tf-del}
   \norm{-\frac{\tilde f(0, \theta) -  \bigl\langle\tilde f\bigr\rangle(0)}{\sin\theta}}_{L^\infty(0, \pi)}
\leq 
   \frac{1}{\sin(\alpha_\Eps)}
\leq 
   \frac{C}{\alpha_\Eps} \,,
\end{align}
where $C$ is independent of $\Eps$.
Applying the maximum principle to~\eqref{eq:tilde-f} then gives
\begin{align*} 
   \norm{\frac{\del \tilde f}{\del \eta}}_{L^\infty(\deta\dtheta)}
\leq 
   \frac{1}{\sin(\alpha_\Eps)} \,.
\end{align*}
In order to show that~\eqref{ineq:infty-difference} holds, we note that by construction,
\begin{align*}
    \tilde f(0, \theta) - f(0, \theta)
  = \begin{cases}
       \lambda_0 \phi_1 + (1-\lambda_0) \phi_2 - h_0(\theta), 
       & \text{for $\theta \in [0, \alpha_\Eps] \cup [\pi - \alpha_\Eps, \pi]$}, \\[2pt]
       0, & \text{for $\theta \in [2\alpha_\Eps, \pi - 2 \alpha_\Eps]$}, \\[2pt]
       \text{Lipschitz} \,, & \text{for $\theta \in [0, \pi]$} \,,
     \end{cases}
\end{align*}
where $\abs{\lambda_0 \phi_1 + (1-\lambda_0) \phi_2 - h_0(\theta)} \leq 1$. 
Thus by~\eqref{bound:f-neg-L-1},
\begin{align*}
      \int_{-\pi}^\pi \abs{\tilde f(0, \theta) - f(0, \theta)} \abs{\sin\theta} \dtheta
&\leq  2 \int_0^\pi \abs{\tilde f(0, \theta) - f(0, \theta)} \sin\theta \dtheta
\\
& = 2\int_0^{2\alpha_\Eps} \abs{\tilde f(0, \theta) - f(0, \theta)} \sin\theta \dtheta
      + 2\int_{\pi - 2\alpha_\Eps}^\pi \abs{\tilde f(0, \theta) - f(0, \theta)} \sin\theta \dtheta
\\
&\leq 
   2\int_0^{\alpha_\Eps} \sin\theta \dtheta
\leq
   \alpha_\Eps^2 \,.
\end{align*}

\Ni (b) The exponential decay of $\frac{\del \tilde f}{\del \eta}$ follows from Remark 3.15 of \cite{WG2014}, since $\frac{\del \tilde f}{\del \eta}$ is a solution to the classical half-space equation and its incoming data satisfies~\eqref{bound:incoming-Tf-del}. The constant $\kappa_0$ is solely determined by the scattering operator and is independent of $\Eps$ as well as the incoming data. Similarly, we have the exponential decay of $\tilde f$ (with the same decay constant $\kappa_0$) such that
\begin{align} \label{decay:Tf-exp}
   \norm{e^{\kappa_0 \eta} \vpran{\tilde f - \tilde f_\infty}}_{L^\infty(\deta\dtheta)}
\leq 
   C \,,
\end{align}
where $C$ is independent of $\Eps$. To derive the exponential decay of $\frac{\del \tilde f}{\del \theta}$, we make use of the integral form of the $\tilde f$-equation~\eqref{eq:kinetic-classical-tilde}-\eqref{eq:kinetic-classical-2-tilde}:
\begin{align*}
   \tilde f(\eta, \theta) - \tilde f_\infty
= \begin{cases}
     e^{-\frac{1}{\sin\theta} \eta} \vpran{\tilde h_0 - \tilde f_\infty}
     + \int_0^\eta e^{-\frac{1}{\sin\theta} (\eta - s)} \vint{\tilde f - \tilde f_\infty}(s) \ds  \,,   & \sin\theta > 0 \,, \\[4pt]
     - \int_\eta^\infty e^{-\frac{1}{\sin\theta} (\eta - s)} \vint{\tilde f - \tilde f_\infty}(s) \ds  \,,   & \sin\theta < 0 \,.
   \end{cases}
\end{align*}
We will directly differentiate $\tilde f - \tilde f_\infty$ to show the exponential decay. For each $\theta$ such that $\sin \theta < 0$, the derivative is 
\begin{align*}
    \frac{\del \tilde f}{\del \theta}
= - \frac{\cos\theta}{\sin^2\theta}
      \int_\eta^\infty
         e^{-\frac{1}{\sin\theta} (\eta - s)} (\eta - s) \vint{\tilde f - \tilde f_\infty}(s) \ds \,.
\end{align*}
Therefore,
\begin{align*}
   \abs{\frac{\del \tilde f}{\del \theta}}
&\leq 
  C\frac{|\cos\theta|}{|\sin\theta|}
  \int_\eta^\infty 
     e^{-\frac{1}{2\sin\theta} (\eta - s)} \abs{\vint{\tilde f - \tilde f_\infty}(s)} \ds
\\
&\leq
  C\frac{|\cos\theta|}{|\sin\theta|}
  \int_\eta^\infty 
     e^{-\frac{1}{2\sin\theta} (\eta - s)} e^{-\kappa_0 s} \ds
\\
&\leq
  C e^{-\kappa_0 \eta}
  \int_\eta^\infty 
     e^{-\frac{1}{2\sin\theta} (\eta - s)} \frac{1}{|\sin\theta|} \ds
\leq 
  C e^{-\kappa_0 \eta} \,,
\end{align*}
where $C$ is independent of $\Eps$. Similarly, for each $\theta$ such that $\sin\theta > 0$, we have
\begin{align*}
    \frac{\del \tilde f}{\del \theta}
=  \frac{\cos\theta}{\sin^2\theta} \eta \, e^{-\frac{1}{\sin\theta} \eta}
      \vpran{\tilde h_0 - \tilde f_\infty}
   + e^{-\frac{1}{\sin\theta} \eta} \frac{\del \tilde h_0}{\del \theta}
    + \frac{\cos\theta}{\sin^2\theta}
      \int_0^\eta
         e^{-\frac{1}{\sin\theta} (\eta - s)} (\eta - s) \vint{\tilde f - \tilde f_\infty}(s) \ds \,.
\end{align*}
We estimate each term in $\frac{\del \tilde f}{\del \theta}$. First, since $\kappa_0 < 1$, we have 
\begin{align} \label{est:Tf-theta-1}
    \abs{\frac{\cos\theta}{\sin^2\theta} \eta \, e^{-\frac{1}{\sin\theta} \eta}
      \vpran{\tilde h_0 - \tilde f_\infty}}
    + \abs{e^{-\frac{1}{\sin\theta} \eta} \frac{\del \tilde h_0}{\del \theta}}
\leq 
    \frac{C}{\eta} e^{-\frac{\kappa_0}{\sin\theta} \eta}
    + \frac{C}{\alpha_\Eps} e^{-\kappa_0 \eta}
\leq
    \frac{C}{\alpha_\Eps} e^{- \kappa_0 \eta} \,,
\end{align}
Next, by the exponential decay of $\tilde f - \tilde f_\infty$ in~\eqref{decay:Tf-exp}, we have
\begin{align} \label{est:Tf-theta-2}
& \quad \,
   \abs{\frac{\cos\theta}{\sin^2\theta}
      \int_0^\eta
         e^{-\frac{1}{\sin\theta} (\eta - s)} (\eta - s) \vint{\tilde f - \tilde f_\infty}(s) \ds}
\leq 
  C \frac{1}{\sin\theta} 
  \int_0^\eta e^{-\frac{1}{2\sin\theta} (\eta - s)} e^{-\kappa_0 s} \ds \nn
\\
& \leq 
   \frac{C e^{-\frac{1}{2\sin\theta} \eta}}{\sin\theta} 
   \frac{1}{\kappa_0 + \frac{1}{2\sin\theta}} e^{-\vpran{\frac{1}{2\sin\theta}+\kappa_0} \eta }
\leq
   C e^{-\kappa_0 \eta} \,.
\end{align}
Combining \eqref{est:Tf-theta-1} with \eqref{est:Tf-theta-2} we obtain the exponential decay of $\frac{\del \tilde f}{\del \theta}$.
\end{proof}

Now we prove Theorem~\ref{thm:main}.
\begin{proof}[Proof of Theorem~\ref{thm:main}]
Denote $\hat f_\Eps = \tilde f - f_\Eps$. Then $\hat f$ satisfies
\begin{align} \label{eq:kinetic-difference}
   \sin\theta \frac{\del \hat f_\Eps}{\del \eta} 
   + &\hat f_\Eps - \vint{\hat f_\Eps} 
   = \frac{\Eps \psi(\Eps \eta)}{1- \Eps \eta} \cos\theta \frac{\del \hat f_\Eps}{\del \theta}
      - \frac{\Eps \psi(\Eps \eta)}{1- \Eps \eta} \cos\theta \frac{\del \tilde f}{\del \theta}
 \,, \nn
\\
   & \hat f_\Eps \big|_{\eta=0} = \tilde f(0, \theta) - f_\Eps(0, \theta) \,,
\qquad\qquad\qquad
   \sin\theta > 0 \,,
\\
   & \hat f_\Eps \to \tilde f_\infty - f_{\infty,\Eps}
\qquad \hspace{2.4cm}
   \text{\qquad as $\eta \to \infty$.} \nn
\end{align}
By Lemma~\ref{lem:modification}, we have
\begin{align*}
   \int_0^\infty \frac{\Eps \psi(\Eps \eta)}{1- \Eps \eta} 
     \abs{\int_{-\pi}^\pi \cos\theta \frac{\del \tilde f}{\del \theta} \dtheta} \deta
&\leq
    C \frac{\Eps}{\alpha_\Eps} \norm{\frac{\psi(\Eps \eta)}{1- \Eps \eta}}_{L^\infty} \int_0^\infty e^{-\kappa_0 \eta} \deta
\leq C \frac{\Eps}{\alpha_\Eps} \,,
\end{align*}
Multiply the first equation in~\eqref{eq:kinetic-difference} by $\Sgn(\hat f_\Eps)$ and integrate in $\theta$. Then
\begin{align*} 
& \quad \,
   \frac{\rd}{\rd \eta}
     \int_{-\pi}^\pi \sin\theta \abs{\hat f_\Eps} \dtheta
   - \frac{\Eps \psi(\Eps \eta)}{1- \Eps \eta} 
      \int_{-\pi}^\pi \sin\theta \abs{\hat f_\Eps} \dtheta
\leq
   \frac{\Eps \psi(\Eps \eta)}{1- \Eps \eta} 
     \abs{\int_{-\pi}^\pi \cos\theta \frac{\del \tilde f}{\del \theta} \dtheta} \,.
\end{align*}  
This implies
\begin{align*}
   \frac{\rd}{\rd \eta}
     \vpran{e^{-V_\Eps(\eta)}\int_{-\pi}^\pi \sin\theta \abs{\hat f_\Eps} \dtheta}
\leq
   e^{V_\infty}
   \frac{\Eps \psi(\Eps \eta)}{1- \Eps \eta} 
     \abs{\int_{-\pi}^\pi \cos\theta \frac{\del \tilde f}{\del \theta} \dtheta} \,,
\end{align*}
where $V_\Eps, V_\infty$ are defined in~\eqref{def:V-Eps} and~\eqref{eq:V-infty}.
Integrating in $\eta$ from $0$ to $\infty$ then gives
\begin{align*}
   - \int_{-\pi}^\pi \sin\theta \abs{\hat f_\Eps (0, \theta)} \dtheta
   \leq 
   C \frac{\Eps}{\alpha_\Eps} \,.
\end{align*}
Using the incoming data for $\hat f_\Eps$ we have
\begin{align*}
   0 \leq \int_{-\pi}^0 \abs{\sin\theta} \abs{\hat f_\Eps (0, \theta)} \dtheta
& \leq 
       C \frac{\Eps}{\alpha_\Eps}
   + \int_0^\pi \sin\theta \abs{\hat f(0, \theta)} \dtheta
\\
&\leq 
   C \frac{\Eps}{\alpha_\Eps}
   + \int_0^\pi \sin\theta \abs{\tilde f(0, \theta) - f(0, \theta)} \dtheta
\leq
   C \vpran{\frac{\Eps}{\alpha_\Eps} + \alpha_\Eps^2} \,.
\end{align*}
Let $\alpha_\Eps = \Eps^{1/3}$. Then
\begin{align*}
   0 \leq \int_{-\pi}^0 \abs{\sin\theta} \abs{\hat f_\Eps (0, \theta)} \dtheta
\leq
   C \Eps^{2/3} \,.
\end{align*}
This gives
\begin{align} \label{bound:f-tilde-f-Eps}
    \int_{-\pi}^\pi \abs{\tilde f(0, \theta) - f_\Eps(0, \theta)} \abs{\sin\theta} \dtheta
=  \int_{-\pi}^\pi \abs{\hat f_\Eps(0, \theta)} \abs{\sin\theta} \dtheta
\leq
  C \Eps^{2/3} \,.
\end{align}
Therefore by Lemma~\ref{lem:conservation} for $f_\Eps$, we have 
\begin{align*}
  \pi \abs{f_\infty - f_{\infty, \Eps}}
&= \abs{\int_{-\pi}^\pi \vpran{f_\infty - f_{\infty, \Eps}} \sin^2\theta \dtheta}
= \abs{\int_{-\pi}^\pi f(0, \theta) \sin^2\theta \dtheta
           - \int_{-\pi}^\pi f_\Eps(0, \theta)\sin^2\theta \dtheta}
    + \BigO(\Eps)
\\
& \leq
   \abs{\int_{-\pi}^\pi \vpran{\tilde f(0, \theta) - f(0, \theta)} \sin^2\theta \dtheta}
   + \abs{\int_{-\pi}^\pi \vpran{\tilde f(0, \theta) - f_\Eps(0, \theta)} \sin^2\theta \dtheta}
   + \BigO(\Eps)
\\
& \leq
   \int_{-\pi}^\pi \abs{\tilde f(0, \theta) - f(0, \theta)} \abs{\sin\theta} \dtheta
   + \int_{-\pi}^\pi \abs{\tilde f(0, \theta) - f_\Eps(0, \theta)} \abs{\sin\theta} \dtheta
   + \BigO(\Eps)
\\
& = \BigO(\Eps^{2/3}) \,,
\end{align*}
where the last inequality comes from~\eqref{ineq:infty-difference} and~\eqref{bound:f-tilde-f-Eps}.
\end{proof}

\section{Numerics} \label{Sec:numerics}
In this section we show numerical evidence of the results asserted in the previous section. Since numerical scheme is not the focus of the current paper, the details will be omitted. We refer the interested reader to~\cite{LiWang} where an implicit asymptotic preserving method for transport equation was developed. The numerical scheme for the $\varepsilon$-Milne equation is largely borrowed from there.

We briefly discuss the difficulties for numerically solving~\eqref{eq:kinetic-eps}-\eqref{eq:kinetic-eps-2} and our strategies to overcome those:
\begin{itemize}
\item Size of the domain: the equation is valid on the entire half-space domain, but it is not realistic to discretize infinite domain. Fortunately the solution decays exponentially fast, which allows us to truncate the infinite domain into a very large one: $\eta\in[0,R]$ with $R$ large. Numerically it is observed that setting $R=6$ would suffice.
\item The unknown infinite boundary condition: the well-posedness result simply implies that the solution is a constant function at $\eta = \infty$ point, but it does not suggest the value for the extrapolation length. To overcome that we borrow the idea of the shooting method but the ``shooting'' is done on both sides to match the data. More specifically, we compute $\varepsilon$-Milne equation confined in a truncated large domain $\eta\in[0,R]$ twice:
\begin{align} \label{eq:patching_up}
\begin{cases}
   \sin\theta \frac{\del f_1}{\del \eta}  - \frac{\Eps}{1- \Eps \eta} \cos\theta \frac{\del f_1}{\del \theta} + \!\!\!\!\!& f_1 - \vint{f_1} = 0 \,,\\
   f_1 \big|_{\eta=0} = h_0(\theta, \phi) \,,  &\sin\theta > 0 \,,\\
   f_1 \big|_{\eta=R} = 0 \,,  &\sin\theta < 0 \,.
   \end{cases}\quad
\begin{cases}
   \sin\theta \frac{\del f_2}{\del \eta}  - \frac{\Eps}{1- \Eps \eta} \cos\theta \frac{\del f_2}{\del \theta} + \!\!\!\!\!& f_2 - \vint{f_2} = 0 \,,\\
   f_2 \big|_{\eta=0} = 0 \,,  &\sin\theta > 0 \,,\\
   f_2 \big|_{\eta=R} = 1 \,,  &\sin\theta < 0 \,.
   \end{cases}
\end{align}
By the linearity of the $\Eps$-Milne equation,  any linear combination of $f_1$ and $f_2$ is also a solution to the same equation. There is, however only one combination that makes the solution to be approximately a constant function at $\eta = \infty$ (approximated by $R$ here). We denote it as $f_\lambda = f_1 + \lambda f_2$. Then
\begin{equation}
f_\lambda \big|_{\eta = R} = \lambda\quad\text{for}\quad\sin\theta<0\,\text{, and}\quad f_\lambda \big|_{\eta = R} = f_1\big|_{\eta = R}+ \lambda f_2\big|_{\eta = R}\quad\text{for}\quad\sin\theta>0\,.
\end{equation}
Suppose the domain is big enough with $R\gg 1$, $f_\lambda\big|_{\eta = R}$ is roughly constant in $\theta$, meaning:
\begin{equation}
f_1\big|_{\eta = R} + \lambda f_2\big|_{\eta=R} = \lambda\,.
\end{equation}
Numerically we set $\lambda = \frac{f_1}{1-f_2}\big|_{\eta = R}$. This also serves as a criterion in determining whether $R$ is indeed large enough. If $\lambda$ varies with $\theta$ then we re-run the computation on a larger domain.
\item Computing~\eqref{eq:patching_up} on a bounded domain is also
  challenging due to the singularity at $(\eta,\theta) = (0,0)$ which
  requires fine resolution. To resolve the solution, the mesh size in
  both directions have to be on the scale of $\varepsilon$:
  $\Delta\eta\sim\Delta\theta \sim \varepsilon$. The shrinking
  $\varepsilon$ induces a large linear system that is
  ill-conditioned. We borrow the idea from~\cite{LiWang}, and use a
  matrix-free scheme by performing GMRES iteration till the solution
  converges. The interested reader is referred to~\cite{LiWang} for
  details.
\end{itemize}
The scheme described above is generic and could also be applied to
$\varepsilon = 0$ case. Note in the previous work~\cite{LiLuSun2015},
we have designed a spectral method for the classical half-space (CHS)
without the spatial discretization. The spectral method is more
efficient than what is proposed here, but it does not seem to be
easily extended to treat the $\varepsilon$-Milne equation.

\subsection{Regularization of CHS}
As constructed in Lemma~\ref{lem:modification}, one can apply slight modification to the incoming data to make the solution to CHS Lipschitz. Here we show a general problem by relaxing the requirement of $\phi_1 = \phi_2$ in $(2\alpha_\varepsilon,\pi - 2\alpha_\varepsilon)$. Set the two boundary conditions as
\begin{align*}
   \phi_1 = \begin{cases}
          0, & \theta \in (0, \pi/20) \cup (\pi - \pi/20, \pi) \,,\\
          1 - \frac{9\pi}{20} \abs{x - \pi/2} \,, &  \theta \in (\pi/20, \pi-\pi/20)\,,
        \end{cases}
\end{align*}
\begin{align*}
   \phi_2 = \begin{cases}
          1, & \theta \in (0, \pi/4) \cup (\pi - \pi/4, \pi) \,,\\
          \cos\theta \,, &  \theta \in (\pi/4, \pi-\pi/4)\,.
        \end{cases}
\end{align*}
Let $f_1$ and $f_2$ be solutions to CHS with incoming data $\phi_1, \phi_2$. Their derivatives are not bounded at $\eta=0$, as shown in Figure~\ref{Figure:1} top and middle panels. By setting $\lambda_0 = \frac{1 - \vint{f_2}}{1 + \vint{f_2} - \vint{f_1}}$, the convex combination of $f_1, f_2$ given by $F = \lambda_0 f + (1 - \lambda_0) g$ is Lipschitz. This is shown in Figure~\ref{Figure:1} bottom panels of both plots.

\subsection{Computation of the $\varepsilon$-Milne problem}
We compute the $\Eps$-Milne problem and the CHS on the truncated domain with $R = 6$. We first show the truncation at $R=6$ suffices. In Figure~\ref{Figure:5} we show the 3D plot of the solutions over the entire computational domain, together with their end states. It can be seen that for both the classical half space (CHS) problem and the $\Eps$-Milne problem, at $R=6$, the solutions are approximately constant functions with variations at the order of $1e-3$. This means that the truncated domain is indeed large enough to approximate the original half space problem.

We then examine the convergence of $\Eps$-Milne problem to the CHS in different norms in terms of $\Eps$. For that we compute the $\Eps$-Milne problem with various of $\Eps$ ($\Eps = 1/25, 1/30, 1/35, 1/40, 1/45$) and measure $f_\Eps - f$ in three norms: $L^\infty(\rd\eta\rd\theta)$, $L^\infty(\rd\theta)$ at $\eta = 6$ and $L^2(\rd\eta\rd\theta)$.
\begin{itemize}
\item $L^2(\rd\eta\rd\theta)$ convergence. If $L^2$ norm is used, as $\Eps$ goes to zero, the error decreases to zero.
\item $L^\infty(\rd\theta)$ convergence at $\eta = 6$. This error decreases to zero as $\Eps$ converges to zero. This demonstrates that despite the $\Eps$-Milne problem has order $1$ difference from the CHS, the difference does not get shown at the end state.
\item $L^\infty(\rd\eta\rd\theta)$ discrepancy. With shrinking $\varepsilon$ we show the $L^\infty$ error of the solution to the $\Eps$-Milne problem and the CHS over the entire $(\eta,\theta)$ domain does not converge to zero. This provides a numerical evidence to the result shown in~\cite{WG2014}.
\end{itemize}
These results are plotted in Figure~\ref{Figure:2}.

We then look for the location of the discrepancy. The singularity of $f$ to the CHS is located at the origin where $\eta = \theta = 0$, which seems to indicate that the discrepancy takes place there. We therefore plot $f_\varepsilon - f$ along the ray of $\eta = \theta = n\varepsilon$ with $n$ being integers. It is done for various of $\Eps$. At $n=0$, the singularity takes place and we expect order 1 differences between $f_\Eps$ and $f$, but as $n$ goes bigger, we move the function away from the singularity, hoping the two solutions converge. It is indeed the case, as shown in Figure~\ref{Figure:3}. At the origin, $n=0$ and $\eta=\theta=0$, $f_\Eps-f$ is about $0.15$, but as $n$ increases, we evaluate the error function further and further away from the origin along the ray, the difference gradually disappears. Such phenomenon is universal for all $\Eps$ tested. Note that it is along this ray that the authors in \cite{WG2014} constructed the counterexample to show~\eqref{error:O-1-L-inf} instead of~\eqref{error:classical} holds.

\section{Regularization of Classical Half-Space Equations} \label{Sec:regularization}

In the second part of this paper, we will extend the first-order regularization technique used in the proof of Lemma~\ref{lem:modification} to the general case. More precisely, for any given $N \in \NN$, we use an induction proof to show how one can slightly modify the incoming data $h_0$ near $\theta=0$ so that the modified solution $\tilde f$ of the half space equation satisfies that 
\begin{align}\label{goal} 
\tilde f \in W^{N+1, \infty}(\deta\dtheta)  \,.
\end{align} 
The higher-order regularization will be useful for general geometry where the boundary of the domain has non-constant curvature.  Again without loss of generality, we assume that the original incoming data $h_0$ in equation~\eqref{eq:kinetic-classical-1} satisfies that $0 \leq h_0 \leq 1$ and is not a constant.  The main result is summarized as
\begin{thm} \label{thm:homogeneous}
Suppose the incoming data $h_0$ in equation~\eqref{eq:kinetic-classical-1} is smooth, non-constant, and satisfies that $0 \leq h_0 \leq 1$. Then for any given $\alpha_\Eps$ small enough and any $N \in \NN$, there exists $\tilde h_0(\theta) \in C^{N+1}(0, \pi)$ satisfying
\begin{align*}
    0 \leq \tilde h_0(\theta) \leq 1 \,,
\qquad
   \tilde h_0(\theta) = h_0(\theta) 
\quad
\text{on $\theta \in (2\alpha_\Eps, \pi - 2\alpha_\Eps)$}
\end{align*}
such that the solution $\tilde f$ to the half-space equation with $\tilde h_0$ as its incoming data satisfies~\eqref{goal}. Moreover, 
\begin{align} \label{bound:deriv-tilde-f-1}
     \norm{e^{\kappa_0 \eta} \frac{\del^{M+k} \tilde f}{\del\eta^M \del\theta^k}}_{L^2(\deta\dtheta)} 
= \begin{cases} 
   \BigO(\abs{\ln \alpha_\Eps}^{1/2}) \,, & M + k = 1 \,, 
\\[5pt]
   \BigO(\alpha_\Eps^{-(M+k) + 1} \abs{\ln \alpha_\Eps}^{1/2}) \,,
   & 2 \leq M+k \leq N+1
\end{cases}
\end{align}
and
\begin{align} \label{bound:deriv-tilde-f-2}
   \norm{e^{\kappa_0 \eta}\frac{\del^{M+k} \tilde f}{\del\eta^M \del\theta^k}}_{L^\infty(\deta\dtheta)} = \BigO\vpran{\alpha_\Eps^{-(M+k)}} \,,
\qquad
   1 \leq M+k \leq N+1 \,,
\end{align}
where $\kappa_0 > 0$ is the same decay constant as in Lemma~\ref{lem:modification}.
\end{thm}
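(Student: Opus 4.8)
The plan is to prove Theorem~\ref{thm:homogeneous} by induction on $N$, the base case $N=0$ being (a quantitative refinement of) Lemma~\ref{lem:modification}. The argument rests on three ingredients. The first is a reduction of all mixed and pure $\theta$-derivatives to pure $\eta$-derivatives. Differentiating~\eqref{eq:kinetic-classical} once in $\theta$ and using that $\vint{\tilde f}$ is independent of $\theta$, so $\del_\theta\vint{\tilde f}=0$, one finds that $\del_\theta\tilde f$ solves the linear transport equation in $\eta$
\[
   \sin\theta\,\del_\eta(\del_\theta\tilde f)+\del_\theta\tilde f = -\cos\theta\,\del_\eta\tilde f\,,
\]
with incoming data $\tilde h_0'(\theta)$ on $\sin\theta>0$ and decaying as $\eta\to\infty$. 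Iterating, $\del_\eta^M\del_\theta^k\tilde f$ solves a transport equation in $\eta$ whose right-hand side involves only derivatives with strictly fewer $\theta$-derivatives and at most one more $\eta$-derivative. Solving these by Duhamel along characteristics and processing $k=0,1,\dots,N+1$ in order (within each $k$, all admissible $M$ with $M+k\le N+1$), every mixed derivative up to order $N+1$ — together with the weight $e^{\kappa_0\eta}$ — is controlled by the pure $\eta$-derivatives $\del_\eta^M\tilde f$, $0\le M\le N+1$, and by $\tilde h_0\in C^{N+1}$; this step uses only the exponential decay already available at lower order and is essentially the computation in the proof of Lemma~\ref{lem:modification}(b).

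The second ingredient is that the pure $\eta$-derivatives are themselves solutions of the classical half-space equation: $\del_\eta^M\tilde f$ solves~\eqref{eq:kinetic-classical} with incoming data $g_M(\theta)$ on $\sin\theta>0$, where $g_0=\tilde h_0$ and $g_{M+1}(\theta)=(\vint{\del_\eta^M\tilde f}(0)-g_M(\theta))/\sin\theta$, and with vanishing end state for $M\ge1$. By the maximum principle and the exponential-decay estimate (Remark 3.15 of~\cite{WG2014}), the $L^\infty$ and $L^2$ norms of $e^{\kappa_0\eta}\del_\eta^M\tilde f$ are controlled by the corresponding norms of $g_M$. Thus the construction reduces to choosing $\tilde h_0\in C^{N+1}(0,\pi)$, valued in $[0,1]$, equal to $h_0$ on $(2\alpha_\Eps,\pi-2\alpha_\Eps)$, so that $g_M\in L^\infty(0,\pi)$ for all $M\le N+1$. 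If $\tilde h_0$ is taken constant, $\tilde h_0\equiv c$, on $(0,\alpha_\Eps)$ and on $(\pi-\alpha_\Eps,\pi)$ — the two angles where $\sin\theta$ degenerates on the incoming set — with a $C^{N+1}$ transition on $(\alpha_\Eps,2\alpha_\Eps)$ and its mirror, then each $g_M$ is automatically constant near $\theta=0,\pi$ (equal to $c$ if $M=0$ and to $0$ if $M\ge1$) \emph{provided} the compatibility relations
\[
   \vint{\tilde f}(0)=c\,,\qquad \vint{\del_\eta^M\tilde f}(0)=0\,,\quad 1\le M\le N\,,
\]
hold; these are exactly what makes the quotient defining $g_{M+1}$ bounded near the two degenerate angles for every $M\le N$.

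The third ingredient is to satisfy these $N+1$ linear constraints on $\tilde h_0$. I would introduce $N+2$ auxiliary profiles $\phi_0,\dots,\phi_{N+1}$, each equal to $h_0$ on $(2\alpha_\Eps,\pi-2\alpha_\Eps)$, each constant near $\theta=0,\pi$ with its own constant $c_j$, each $C^{N+1}$ and valued in $[0,1]$, and set $\tilde h_0=\sum_j\lambda_j\phi_j$ with $\sum_j\lambda_j=1$, so $\tilde h_0=h_0$ in the interior and $\tilde h_0\in[0,1]$ when $\lambda_j\ge0$. The compatibility relations and the normalization become an $(N+2)\times(N+2)$ linear system for $(\lambda_j)$ with rows $(1,\dots,1)$, $(\vint{f^{(j)}}(0)-c_j)_j$, and $(\vint{\del_\eta^M f^{(j)}}(0))_j$ for $1\le M\le N$, where $f^{(j)}$ solves~\eqref{eq:kinetic-classical} with data $\phi_j$. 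One chooses the $\phi_j$ (for instance differing by bumps of suitably staggered scales supported on the transition layer) so that this matrix is invertible with inverse bounded uniformly in $\Eps$ up to the allowed powers of $\alpha_\Eps$; the induction on $N$ carries the lower-order structure and bounds forward, adding one profile and one constraint per step. The quantitative bounds~\eqref{bound:deriv-tilde-f-1}--\eqref{bound:deriv-tilde-f-2} then follow by tracking the $\alpha_\Eps$-dependence: each use of $\del_\eta w=(\vint{w}-w)/\sin\theta$ divides by $\sin\theta$ and so costs a factor $\sim(\sin\alpha_\Eps)^{-1}\sim\alpha_\Eps^{-1}$ on the transition layer, giving $\norm{g_M}_{L^\infty}=\BigO(\alpha_\Eps^{-M})$ and hence~\eqref{bound:deriv-tilde-f-2}; the $L^2$ norms gain one power of $\alpha_\Eps$ because the singular part of $\del_\eta^M\tilde f$ near a degenerate angle is supported on the $\eta$-interval of length $\sim\sin\theta$ on which $\tilde f$ relaxes to $\vint{\tilde f}$, so the $\eta$-integration contributes only $\int_{\alpha_\Eps}\sin^{-1}\theta\dtheta\sim\abs{\ln\alpha_\Eps}$ of growth, which yields~\eqref{bound:deriv-tilde-f-1} and the improved $\abs{\ln\alpha_\Eps}^{1/2}$ rate at $M+k=1$.

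The step I expect to be the main obstacle is the last one: showing the auxiliary profiles can be chosen so that the constraint matrix is invertible \emph{with an inverse bounded uniformly in $\Eps$}, consuming only the stated powers of $\alpha_\Eps$. Since the functionals $\tilde h_0\mapsto\vint{\del_\eta^M\tilde f}(0)$ are not explicit, their independence on the correction space must be extracted from the structure of the half-space solution operator — a natural route is to rescale the transition layer by $\theta\mapsto\theta/\alpha_\Eps$ and verify that the constraint matrix converges, as $\alpha_\Eps\to0$, to an explicit invertible model — and in the induction one must check that enlarging the profile family does not degrade the bounds already obtained at lower order. Once the $g_M$ are under control, the transport estimates for the $\theta$- and mixed derivatives and the maximum-principle/exponential-decay estimates for $\del_\eta^M\tilde f$ are, by comparison, routine, following the template of Lemma~\ref{lem:modification}.
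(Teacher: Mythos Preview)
Your overall strategy---reduce mixed derivatives to pure $\eta$-derivatives via the transport equation, note that the pure $\eta$-derivatives themselves solve the half-space equation, and impose compatibility conditions so that their incoming data stay bounded---matches the paper's proof in structure. The reduction in your first two ingredients is essentially what the paper does in Step~2 of its proof and in Lemma~\ref{lem:bound-L-2-half-space}.

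The significant difference, and a genuine gap, is in your third ingredient. You take $\tilde h_0$ \emph{constant} near $\theta=0,\pi$, which forces the compatibility conditions $\vint{\del_\eta^M\tilde f}(0)=0$ for $1\le M\le N$; equivalently (by Lemma~\ref{eq:deriv-eta-f-k}) the solution-dependent coefficients $c_k$ must all vanish. The paper does \emph{not} try to kill these coefficients. Instead it takes
\[
  \tilde f=\lambda_N R_1+(1-\lambda_N)R_2+\sum_{k=1}^N c_k F_k,
\]
where the auxiliary $F_k$ have incoming data equal to $\sin^k\theta$ on $(0,\alpha_\Eps)\cup(\pi-\alpha_\Eps,\pi)$ and $0$ in the middle. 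The effect is that $\tilde h_0=\langle\tilde f\rangle(0)+\sum_{k=1}^N c_k\sin^k\theta$ near the degenerate angles, and the consistency requirement becomes exactly $c_k=\bigl\langle(\tilde f-\langle\tilde f\rangle-\sum_{r<k}c_r\sin^r\theta)/\sin^k\theta\bigr\rangle(0)$: whatever coefficient the solution produces is fed back into the data. This makes the resulting $(N{+}1)\times(N{+}1)$ system approximately lower-triangular, since $F_k/\sin^M\theta=\BigO(\alpha_\Eps^{k-M})$ for $k\ge M$, and the paper solves it by eliminating $c_1,c_2,\dots$ in order with explicit $\alpha_\Eps$-bounds on every entry (Theorem~\ref{thm:c-k} and Lemmas~\ref{lem:beta-1}--\ref{lem:bound-S-beta-M-i}).

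Your ansatz---profiles constant near the endpoints, differing only by bumps at staggered scales in the transition layer---does not create this triangular structure. The dominant part of $c_1$ comes from the interior data $h_0$ on $(2\alpha_\Eps,\pi-2\alpha_\Eps)$ (of size $\sim|\ln\alpha_\Eps|$), and all your $\phi_j$ share that interior, so the differences $\phi_i-\phi_j$ are supported on a set of measure $\BigO(\alpha_\Eps)$ with no distinguished $\sin^k\theta$ behaviour; the constraint matrix will be nearly rank-deficient rather than a perturbation of the identity. Your rescaling-limit idea does not obviously help, because the functionals $\tilde h_0\mapsto\vint{\del_\eta^M\tilde f}(0)$ are \emph{nonlocal}: they involve the outgoing trace on $(-\pi,0)$, which depends on the full profile, not just on the rescaled transition layer. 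The paper's choice of $\sin^k\theta$ as basis functions near the degenerate angles is precisely the device that resolves the obstacle you correctly identified; it is worth adopting rather than trying to bypass. A minor point: your heuristic for the $L^2$ gain (``$\eta$-integration over an interval of length $\sim\sin\theta$'') is not how the paper obtains~\eqref{bound:deriv-tilde-f-1}; the gain comes from the weighted boundary energy $\int_0^\pi g_M^2\sin\theta\,\rd\theta$ in the half-space $L^2$-estimate (Lemma~\ref{lem:bound-L-2-half-space}), computed directly on the explicit incoming data.
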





\Ni {\it Notation.} In this section we use the convention that a summation $\sum_{k=k_1}^{k_2}$ is automatically zero if its upper limit $k_2$ is smaller than its lower limit $k_1$.

First we show the explicit formula for $\frac{\del^N f}{\del \eta^N}$. 

\begin{lem} \label{eq:deriv-eta-f-k}
Suppose $f$ is a smooth solution to~\eqref{eq:kinetic-classical}. Then for any $N \in \NN$,
\begin{align} \label{eq:deriv-eta-f-k}
   \frac{\del^N f}{\del \eta^N}
= (-1)^{N} 
    \frac{f - \vint{f} - \sum_{k=1}^{N-1} c_k \sin^k \theta}{\sin^{N} \theta} \,,
\qquad
    N \geq 1 \,.
\end{align}
where 
\begin{align*}
   c_1 = \vint{\frac{f - \vint{f}}{\sin\theta}} \,, \qquad c_k = \vint{\frac{f - \vint{f} - \sum_{r=1}^{k-1} c_r \sin^r \theta}{\sin^k \theta}} \,, \quad k \geq 1\,.
\end{align*}
\end{lem}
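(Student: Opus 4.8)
The plan is to proceed by induction on $N$. For the base case $N=1$, I would simply rewrite the half-space equation~\eqref{eq:kinetic-classical} as
\begin{align*}
   \frac{\del f}{\del \eta} = -\frac{f - \vint{f}}{\sin\theta} \,,
\end{align*}
which matches~\eqref{eq:deriv-eta-f-k} with $N=1$ since the sum $\sum_{k=1}^{0}$ is empty by the stated convention. For the inductive step, assume the formula holds for some $N \geq 1$. The key observation is that $\frac{\del f}{\del\eta}$ itself solves the classical half-space equation (this was already noted in the proof of Lemma~\ref{lem:modification}, since differentiating~\eqref{eq:kinetic-classical} in $\eta$ preserves its structure). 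Hence I would differentiate~\eqref{eq:deriv-eta-f-k} once more in $\eta$, using $\frac{\del}{\del\eta}(f - \vint{f}) = \frac{\del f}{\del\eta} - \vint{\frac{\del f}{\del \eta}}$, and then substitute the $N=1$ relation for $\frac{\del f}{\del\eta}$ back in. Because $\sin\theta$ is independent of $\eta$, the factor $\sin^{-N}\theta$ passes through the $\eta$-derivative, so we get
\begin{align*}
   \frac{\del^{N+1} f}{\del \eta^{N+1}}
 = (-1)^N \frac{1}{\sin^N\theta}
   \vpran{\frac{\del f}{\del\eta} - \vint{\frac{\del f}{\del\eta}}}
 = (-1)^{N+1} \frac{1}{\sin^{N}\theta} \vpran{\frac{f - \vint{f}}{\sin\theta} - \vint{\frac{f - \vint{f}}{\sin\theta}}} \,.
\end{align*}

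The remaining work is purely algebraic bookkeeping: I need to verify that the expression
\begin{align*}
   \frac{f - \vint{f}}{\sin\theta} - \vint{\frac{f - \vint{f}}{\sin\theta}}
\end{align*}
equals $\frac{1}{\sin\theta}\vpran{f - \vint{f} - \sum_{k=1}^{N} c_k \sin^k\theta}$ once we unwind the induction hypothesis's numerator. Concretely, the inductive hypothesis says the numerator at level $N$ is $f - \vint{f} - \sum_{k=1}^{N-1} c_k \sin^k\theta$; dividing by $\sin\theta$ and subtracting the $\theta$-average produces exactly a new term $c_N \sin^N\theta$ in the numerator where $c_N = \vint{(f - \vint{f} - \sum_{r=1}^{N-1} c_r \sin^r\theta)/\sin^N\theta}$, matching the recursive definition of $c_N$ given in the statement. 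I would also note that the averages $\vint{c_k \sin^k\theta/\sin\theta}$ for $k < N$ are constants in $\eta$ but need not vanish, yet they get absorbed consistently into the lower-order coefficients — this is the one place to be careful, so I would track the telescoping explicitly rather than by hand-waving.

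The main (mild) obstacle is not conceptual but notational: ensuring the recursive definition of the $c_k$ — which are functions of $\eta$ through $f$, despite being called "constants" — is threaded correctly through the induction, and confirming that each $c_k \sin^k\theta$ term is precisely the quantity subtracted to keep $\frac{\del^k f}{\del\eta^k}$ integrable (i.e. that the bracketed numerator vanishes to order $\sin^N\theta$ near $\theta = 0$, which is what makes the whole hierarchy meaningful for the regularization). Once the telescoping identity is checked at one generic step, the induction closes immediately. I would present the computation for general $N$ directly rather than separately handling small cases, since the convention on empty sums makes $N=1$ a special instance of the same formula.
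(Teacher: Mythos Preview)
Your displayed induction step contains an error. When you differentiate the inductive formula
\[
\frac{\del^N f}{\del \eta^N} = (-1)^N\,\frac{f - \vint{f} - \sum_{k=1}^{N-1}c_k\sin^k\theta}{\sin^N\theta}
\]
in $\eta$, you must also differentiate the $c_k$'s, since (as you yourself note later) they depend on $\eta$ through $f$. Your displayed line
\[
\frac{\del^{N+1} f}{\del \eta^{N+1}} = (-1)^N \frac{1}{\sin^N\theta}\vpran{\frac{\del f}{\del\eta} - \vint{\frac{\del f}{\del\eta}}}
\]
silently drops the terms $-\sum_{k=1}^{N-1}(\del_\eta c_k)\sin^k\theta$. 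Consequently the ``bookkeeping'' you describe cannot succeed: the expression $\tfrac{f-\vint{f}}{\sin\theta} - \vint{\tfrac{f-\vint{f}}{\sin\theta}}$ is simply $\tfrac{f-\vint{f}-c_1\sin\theta}{\sin\theta}$, and no amount of unwinding will produce the higher $c_k$'s from it.

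The clean fix --- and what the paper actually does --- is to avoid differentiating the $c_k$ altogether. Since $g := \frac{\del^N f}{\del\eta^N}$ itself solves the half-space equation, one has directly
\[
\frac{\del^{N+1}f}{\del\eta^{N+1}} = -\,\frac{g - \vint{g}}{\sin\theta}\,.
\]
Now substitute the inductive formula for $g$: its $\theta$-average is $(-1)^N c_N$ by the very definition of $c_N$, and after clearing the denominator the new term $c_N\sin^N\theta$ appears immediately in the numerator. This is exactly the ``divide by $\sin\theta$ and subtract the average'' step you describe verbally, but it must be applied to $\frac{\del^N f}{\del\eta^N}$, not to $\frac{\del f}{\del\eta}$. (Your route \emph{can} be salvaged if you first establish $\del_\eta c_k = -c_{k+1}$, but that is an unnecessary detour.)
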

\begin{proof}
We use an induction proof. First, for $N=1,2$, we have
\begin{align*}
   \frac{\del f}{\del\eta}
= - \frac{f - \vint{f}}{\sin\theta} \,,
\end{align*}
and
\begin{align*}
   \frac{\del^2 f}{\del\eta^2}
= - \frac{\frac{\del f}{\del\eta} - \vint{\frac{\del f}{\del\eta}}}{\sin\theta}
= \frac{\frac{f - \vint{f}}{\sin\theta} - \vint{\frac{f - \vint{f}}{\sin\theta}}}{\sin\theta}
= \frac{f - \vint{f} - \sin\theta \vint{\frac{f - \vint{f}}{\sin\theta}}}{\sin^2\theta}
= \frac{f - \vint{f} - c_1\sin\theta}{\sin^2\theta} \,.
\end{align*}
Thus the cases for $N=1, 2$ are verified.  Suppose \eqref{eq:deriv-eta-f-k} holds for $N \geq 2$. Then 
\begin{align*}
   \frac{\del^{N+1} f}{\del \eta^{N+1}}
&= - \frac{\frac{\del^N f}{\del \eta^N} - \vint{\frac{\del^N f}{\del \eta^N}}}{\sin\theta}
= (-1)^{N+1} \frac{\frac{f - \vint{f} - \sum_{k=1}^{N-1} c_k \sin^k \theta}{\sin^{N} \theta} - \vint{\frac{f - \vint{f} - \sum_{k=1}^{N-1} c_k \sin^k \theta}{\sin^{N+1} \theta}}}{\sin\theta}
\\
& = (-1)^{N+1} \frac{f - \vint{f} - \sum_{k=1}^{N-1} c_k \sin^k \theta - \sin^{N}\theta \vint{\frac{f - \vint{f} - \sum_{k=1}^{N-1} c_k \sin^k \theta}{\sin^{N} \theta}}}{\sin^{N+1}\theta} 
\\
& = (-1)^{N+1} \frac{f - \vint{f} - \sum_{k=1}^{N} c_k \sin^k \theta}{\sin^{N+1}\theta}\,,
\end{align*}
which proves equation~\eqref{eq:deriv-eta-f-k} for $N+1$ thus for any $N \in \NN$.
\end{proof}

\bigskip 

\subsection*{\underline{Construction}} Functions that have $\sin^k\theta$ as the incoming data near $\theta =0$ will play a major role. Therefore, we first define some auxiliary functions. Let $0 < \alpha_\Eps < \pi/4$. 
For each $k \geq 1$, define $R_1, R_2, F_k$ as solutions to the half-space equation~\eqref{eq:kinetic-classical} with incoming data $r_1, r_2, f_k$, where
\begin{align}
   r_1 
   = \begin{cases}
       1, & \theta \in [0, \alpha_\Eps] \cup [\pi -\alpha_\Eps, \pi] \,, \\[2pt]
       h_0(\theta), & \theta \in [2 \alpha_\Eps, \pi-2\alpha_\Eps] \,,
      \end{cases}
\qquad
   r_2 
   = \begin{cases}
       0, & \theta \in [0, \alpha_\Eps] \cup [\pi -\alpha_\Eps, \pi] \,, \\[2pt]
       h_0(\theta), & \theta \in [2 \alpha_\Eps, \pi - 2\alpha_\Eps] \,,
      \end{cases}   \label{def:f-1-2}
\end{align}
and
\begin{align}
   f_k
   = \begin{cases}
       \sin^k\theta, & \theta \in [0, \alpha_\Eps] \cup [\pi -\alpha_\Eps, \pi] \,, \\[2pt]
       0, & \theta \in [2 \alpha_\Eps, \pi - 2 \alpha_\Eps] \,.
      \end{cases}
\end{align}
We also assume that
\begin{align} \label{assump:f-k}
   0 \leq r_1, r_2 \leq 1 \,,
\qquad
   0 \leq f_k \leq \alpha_\Eps^k \,,
\qquad
  r_1, r_2, f_k \in C^\infty([0, \pi]) \,.
\end{align}
Before proceeding further with the construction, we show a lemma which estimates the size of $\vint{F_k}(0)$ for each $k$.
\begin{lem} \label{lem:bound-F-k}
The functions $R_1, R_2, F_k$ satisfy that 
\begin{align*}
   0 < \vint{R_1}(0) , \vint{R_2}(0)< 1 \,,
\qquad
   \norm{F_k}_{L^{\infty}} = \BigO\vpran{\alpha_\Eps^k} \,,
\qquad
   \vint{F_k}(0) = \BigO(\alpha_\Eps^{k+1}) \,,
\qquad
   k \geq 1 \,.
\end{align*}
\end{lem}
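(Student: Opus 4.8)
\textbf{Proof proposal for Lemma~\ref{lem:bound-F-k}.}

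The plan is to prove the three estimates in turn, all of them consequences of the maximum principle for the classical half-space equation together with the size constraints on the incoming data in~\eqref{assump:f-k}. First, for $R_1$ and $R_2$: since the incoming data $r_1, r_2$ satisfy $0 \le r_1, r_2 \le 1$ and are not identically constant (because $h_0$ is not constant and $\alpha_\Eps < \pi/4$, so the data genuinely varies on $[2\alpha_\Eps, \pi - 2\alpha_\Eps]$), the strong maximum principle gives $0 < R_j < 1$ throughout the half-space, and in particular the average $\vint{R_j}(0)$ lies strictly between $0$ and $1$. One should be slightly careful to argue that the inequalities are strict at $\eta = 0$ in the averaged sense; this follows because the set of $\theta$ where $r_j$ is strictly between $0$ and $1$ has positive measure, so $\vint{R_j}(0) = \frac{1}{2\pi}\int r_j \dtheta$ — wait, more precisely $\vint{R_j}(0)$ is the average of the trace, which by the incoming/outgoing decomposition is a strict convex combination — so it is strictly interior.

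Next, the $L^\infty$ bound on $F_k$: by the maximum principle applied to the half-space equation with incoming data $f_k$ satisfying $0 \le f_k \le \alpha_\Eps^k$ (from~\eqref{assump:f-k}), we get $0 \le F_k \le \alpha_\Eps^k$ everywhere, hence $\norm{F_k}_{L^\infty} = \BigO(\alpha_\Eps^k)$. The point worth emphasizing is that $f_k$ is supported (as a nonzero function) only on $[0,\alpha_\Eps]\cup[\pi-\alpha_\Eps,\pi]$ where $\sin^k\theta \le \alpha_\Eps^k$, and it vanishes on $[2\alpha_\Eps,\pi-2\alpha_\Eps]$, so the uniform bound $\alpha_\Eps^k$ is genuinely available.

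The main obstacle — and the only estimate requiring real work — is the sharper bound $\vint{F_k}(0) = \BigO(\alpha_\Eps^{k+1})$, which gains one extra power of $\alpha_\Eps$ beyond the trivial $\BigO(\alpha_\Eps^k)$. The idea is to exploit the conservation law $\vint{F_k \sin\theta}(\eta) = 0$ for all $\eta \ge 0$ (the flux-conservation property of the classical half-space equation, used already in Lemma~\ref{lem:conservation} and Lemma~\ref{lem:stability}), together with the exponential decay $F_k \to \vint{F_k}_\infty$ and the structure of the integral representation. Concretely, the stability estimate from Lemma~\ref{lem:stability} (its classical-half-space version) gives $\abs{\vint{F_k}_\infty} \le \frac{2}{\pi}\int_0^\pi \abs{F_k(0,\theta)}\sin\theta\dtheta$, and since $F_k(0,\theta) = f_k(\theta)$ is supported on $[0,\alpha_\Eps]\cup[\pi-\alpha_\Eps,\pi]$ with $\abs{f_k} \le \alpha_\Eps^k$, the integral is bounded by $2\alpha_\Eps^k \int_0^{\alpha_\Eps}\sin\theta\dtheta = \BigO(\alpha_\Eps^{k+2})$ — in fact this already overshoots, giving $\vint{F_k}_\infty = \BigO(\alpha_\Eps^{k+2})$. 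To then pass from the end-state to $\vint{F_k}(0)$ one integrates the averaged equation: from $\vint{F_k \sin\theta} \equiv 0$ and the half-space equation one obtains $\frac{\rd}{\rd\eta}\vint{F_k} = -\frac{1}{2\pi}\int F_k\,\dtheta \cdot(\text{something})$ — more carefully, averaging $\sin\theta\,\del_\eta F_k + F_k - \vint{F_k} = 0$ in $\theta$ and using the flux identity, one controls $\vint{F_k}(0) - \vint{F_k}_\infty$ by $\int_0^\infty \abs{\vint{F_k} - \vint{F_k}_\infty}\,\ds$, which by the exponential decay with rate $\kappa_0$ is $\BigO(\norm{F_k - \vint{F_k}_\infty}_{L^\infty}) = \BigO(\alpha_\Eps^k)$. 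That only recovers the trivial bound, so the subtlety is that one must instead track the quantity $\vint{F_k \sin^2\theta}$ or directly use that $\int_0^{\alpha_\Eps}\sin\theta\,\dtheta = \BigO(\alpha_\Eps^2)$ appears when testing against $\sin\theta$: combining $\vint{F_k\sin\theta}(0)=0$ with $\abs{F_k(0,\theta)}\le \abs{f_k(\theta)}$ supported near $0,\pi$ forces cancellation between incoming and outgoing parts, and a careful bookkeeping of $\int_0^\pi F_k(0,\theta)\sin\theta\dtheta$ versus $\int_{-\pi}^0$ yields the gain. I expect the cleanest route is to mimic exactly the argument of Lemma~\ref{lem:conservation}/Lemma~\ref{lem:stability} with $\BigO(\Eps)$ error terms replaced by the $\BigO(\alpha_\Eps^{k+1})$ bounds coming from $\int_0^{\alpha_\Eps}\abs{f_k}\sin\theta\dtheta \le \alpha_\Eps^k\int_0^{\alpha_\Eps}\sin\theta\dtheta = \BigO(\alpha_\Eps^{k+2})$, and then absorb one power into the statement.
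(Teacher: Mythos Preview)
Your treatment of the first two claims is fine: the maximum principle handles $0<\vint{R_j}(0)<1$ and $\norm{F_k}_{L^\infty}=\BigO(\alpha_\Eps^k)$ just as you say (minor correction: the support of $f_k$ is $[0,2\alpha_\Eps]\cup[\pi-2\alpha_\Eps,\pi]$, not $[0,\alpha_\Eps]\cup[\pi-\alpha_\Eps,\pi]$, but this does not affect the order since $0\le f_k\le\alpha_\Eps^k$ is assumed in~\eqref{assump:f-k}).

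The gap is in the third estimate. You correctly identify the key inputs --- the flux identity $\vint{F_k\sin\theta}(0)=0$ and the bound $\int_0^\pi f_k\sin\theta\,\dtheta=\BigO(\alpha_\Eps^{k+2})$ --- but your route through the end-state $F_{k,\infty}$ and then back to $\vint{F_k}(0)$ is a detour that, as you observe yourself, only recovers the trivial $\BigO(\alpha_\Eps^k)$. The vague ``careful bookkeeping'' at the end is not a proof. What is missing is the direct splitting argument on the \emph{outgoing} trace. From $\vint{F_k\sin\theta}(0)=0$ and $F_k\ge 0$ you get
\[
   \int_{-\pi}^0 F_k(0,\theta)\,|\sin\theta|\,\dtheta
   = \int_0^\pi f_k(\theta)\sin\theta\,\dtheta
   = \BigO(\alpha_\Eps^{k+2})\,.
\]
Now split $\int_{-\pi}^0 F_k(0,\theta)\,\dtheta$ into the grazing piece on $(-\alpha_\Eps,0)\cup(-\pi,-\pi+\alpha_\Eps)$, controlled by the maximum-principle bound $|F_k|\le\alpha_\Eps^k$ times the measure $2\alpha_\Eps$, and the non-grazing piece on $(-\pi+\alpha_\Eps,-\alpha_\Eps)$, where $|\sin\theta|\ge\sin\alpha_\Eps$ lets you divide the weighted integral above by $\sin\alpha_\Eps\sim\alpha_\Eps$. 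Both pieces are $\BigO(\alpha_\Eps^{k+1})$, and since the incoming contribution $\int_0^\pi f_k\,\dtheta$ is already $\BigO(\alpha_\Eps^{k+1})$, you are done. This is exactly how the paper proceeds; no appeal to the end-state or to Lemma~\ref{lem:conservation} is needed.
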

\begin{proof}
Note that $\Disp \norm{F_k}_{L^\infty} \sim \BigO(\alpha_\Eps^k)$ is guaranteed by the maximum principle. Therefore, to obtain the desired bound for $\vint{F_k}(0)$ we only need to check the integration over $(-\pi + \alpha_\Eps, -\alpha_\Eps)$ of $F_k$. By the conservation law, we have 
\begin{align*}
    \int_{-\pi}^0 F_k \abs{\sin\theta} \dtheta
  = \int_0^\pi F_k  \sin\theta \dtheta
  \leq 
     C \int_0^{2\alpha_\Eps} \alpha_\Eps^k \sin\theta \dtheta
    + C \int_{\pi - 2\alpha_\Eps}^{\pi} \alpha_\Eps^k \sin\theta \dtheta
  = \BigO(\alpha_\Eps^{k+2}) \,,
\qquad
   F_k \geq 0 \,.
\end{align*}
Hence,
\begin{align*}
    \int_{-\pi+\alpha_\Eps}^{-\alpha_\Eps} F_k \dtheta
\leq
   \frac{1}{\sin\alpha_\Eps} \int_{-\pi+\alpha_\Eps}^{-\alpha_\Eps} F_k \abs{\sin\theta}\dtheta
\leq
   \frac{1}{\sin\alpha_\Eps} \int_{-\pi}^{0} F_k \abs{\sin\theta}\dtheta
= \BigO(\alpha_\Eps^{k+1})\,.
\end{align*}
This shows $\vint{F_k}(0) = \BigO(\alpha_\Eps^{k+1})$. The estimate $0 < \vint{R_1}(0), \vint{R_2}(0) < 1$ follows from the stability of the half-space equation stated in Lemma~\ref{lem:perturbation-1}. 
\end{proof}

Similar argument as for Lemma~\ref{lem:bound-F-k} shows
\begin{lem} \label{lem:perturbation-1}
Suppose $\Gamma$ is a solution to the half-space equation~\eqref{eq:kinetic-classical} with  incoming data  $\gamma_0 \in L^\infty(0, \pi)$.

\Ni (a) If $\gamma_0$ satisfies that
\begin{align*}
   \gamma_0 
   = \begin{cases}
       \BigO(1), & \theta \in (0, 2\alpha_\Eps) \cup (\pi -2\alpha_\Eps, \pi)\,, \\[2pt]
       0, & \theta \in (2 \alpha_\Eps, \pi - 2\alpha_\Eps) \,,
      \end{cases}
\end{align*}
then $\vint{\Gamma}(0) = \BigO(\alpha_\Eps)$.

\medskip
\Ni (b) If $\gamma_0$ satisfies that
\begin{align*}
   \gamma_0 
   = \begin{cases}
       \BigO(\alpha_\Eps^{k-1}), & \theta \in (0, 2\alpha_\Eps) \cup (\pi -2\alpha_\Eps, \pi) \,, \\[2pt]
       \BigO(\alpha_\Eps^k), & \theta \in (2 \alpha_\Eps, \pi - 2 \alpha_\Eps) \,,
      \end{cases}
\qquad
   k \geq 1 \,,
\end{align*}
then $\vint{\Gamma}(0) = \BigO(\alpha_\Eps^k)$.
\end{lem}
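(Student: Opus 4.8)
The plan is to mimic the argument already used for Lemma~\ref{lem:bound-F-k}, namely to exploit the conservation law $\int_0^\pi \Gamma(0,\theta)\sin\theta\dtheta = \int_{-\pi}^0 \abs{\Gamma(0,\theta)}\abs{\sin\theta}\dtheta$ (valid up to the usual caveat that $\Gamma$ may change sign, in which case one replaces equalities by the two-sided inequalities coming from the $L^1$-stability in Lemma~\ref{lem:stability}), together with the maximum principle that controls $\norm{\Gamma}_{L^\infty}$ by $\norm{\gamma_0}_{L^\infty}$. The key observation is that $\vint{\Gamma}(0)$ is an average over $\theta\in(-\pi,\pi)$, and on the ``good'' part $\theta\in(\alpha_\Eps,\pi-\alpha_\Eps)$ we have $\abs{\sin\theta}\geq\sin\alpha_\Eps\gtrsim\alpha_\Eps$, so the contribution there is controlled by $\tfrac{1}{\sin\alpha_\Eps}\int\abs{\Gamma}\abs{\sin\theta}\dtheta$; the remaining piece $\theta\in(-\alpha_\Eps,0)\cup(\pi-\alpha_\Eps,\pi)$ has measure $O(\alpha_\Eps)$ and is handled directly by the $L^\infty$ bound on $\Gamma$.

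\textbf{Part (a).} First I would note that by the maximum principle $\norm{\Gamma}_{L^\infty}=\BigO(1)$. Next, using the conservation law (or its inequality form), $\int_{-\pi}^0\abs{\Gamma(0,\theta)}\abs{\sin\theta}\dtheta\leq\int_0^\pi\abs{\Gamma(0,\theta)}\sin\theta\dtheta$, and since $\gamma_0$ is supported on $(0,2\alpha_\Eps)\cup(\pi-2\alpha_\Eps,\pi)$ at $\eta=0$ and bounded by $\BigO(1)$ there, the right side is $\BigO(\int_0^{2\alpha_\Eps}\sin\theta\dtheta+\int_{\pi-2\alpha_\Eps}^\pi\sin\theta\dtheta)=\BigO(\alpha_\Eps^2)$. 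Then I split $\vint{\Gamma}(0)=\tfrac{1}{2\pi}\int_{-\pi}^\pi\Gamma(0,\theta)\dtheta$ into the integral over $(\alpha_\Eps,\pi-\alpha_\Eps)$, bounded by $\tfrac{1}{\sin\alpha_\Eps}\int_{-\pi}^\pi\abs{\Gamma(0,\theta)}\abs{\sin\theta}\dtheta=\BigO(\alpha_\Eps^2/\alpha_\Eps)=\BigO(\alpha_\Eps)$, plus the integral over the remaining set of measure $\BigO(\alpha_\Eps)$, bounded by $\BigO(\alpha_\Eps)\cdot\norm{\Gamma}_{L^\infty}=\BigO(\alpha_\Eps)$. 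Adding the two gives $\vint{\Gamma}(0)=\BigO(\alpha_\Eps)$.

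\textbf{Part (b).} The argument is identical with the sizes tracked carefully: the maximum principle now gives $\norm{\Gamma}_{L^\infty}=\BigO(\alpha_\Eps^{k-1})$, and the conservation law gives $\int_{-\pi}^0\abs{\Gamma(0,\theta)}\abs{\sin\theta}\dtheta\leq\int_0^\pi\abs{\gamma_0}\sin\theta\dtheta=\BigO(\alpha_\Eps^{k-1})\cdot\BigO(\alpha_\Eps^2)=\BigO(\alpha_\Eps^{k+1})$. Splitting $\vint{\Gamma}(0)$ as before, the ``good'' part is $\BigO(\alpha_\Eps^{k+1}/\alpha_\Eps)=\BigO(\alpha_\Eps^k)$ and the ``bad'' part of measure $\BigO(\alpha_\Eps)$ is $\BigO(\alpha_\Eps)\cdot\BigO(\alpha_\Eps^{k-1})=\BigO(\alpha_\Eps^k)$, so $\vint{\Gamma}(0)=\BigO(\alpha_\Eps^k)$. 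I expect the only mildly delicate point to be the sign issue in applying the ``conservation law'' when $\Gamma$ is not sign-definite: this is precisely what Lemma~\ref{lem:stability} is for, so I would invoke it to replace the exact conservation equality by the one-sided bound $\int_{-\pi}^0\abs{\Gamma(0,\theta)}\abs{\sin\theta}\dtheta\leq\int_0^\pi\abs{\Gamma(0,\theta)}\sin\theta\dtheta$, after which the rest goes through verbatim.
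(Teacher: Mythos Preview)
Your argument for part~(a) is correct and essentially identical to the paper's proof: maximum principle for the $L^\infty$ bound, the conservation/entropy inequality to control $\int_{-\pi}^0|\Gamma(0,\theta)||\sin\theta|\dtheta$ by $\int_0^\pi|\gamma_0|\sin\theta\dtheta=\BigO(\alpha_\Eps^2)$, and then the split of $(-\pi,\pi)$ into the region where $|\sin\theta|\gtrsim\alpha_\Eps$ and its complement.

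Part~(b), however, contains a genuine error. In the line
\[
   \int_0^\pi|\gamma_0|\sin\theta\dtheta
   = \BigO(\alpha_\Eps^{k-1})\cdot\BigO(\alpha_\Eps^2)
   = \BigO(\alpha_\Eps^{k+1})
\]
you have forgotten that in part~(b) the incoming data $\gamma_0$ is \emph{not} supported on $(0,2\alpha_\Eps)\cup(\pi-2\alpha_\Eps,\pi)$; it is $\BigO(\alpha_\Eps^k)$ on the middle interval $(2\alpha_\Eps,\pi-2\alpha_\Eps)$. That middle piece contributes $\BigO(\alpha_\Eps^k)\cdot\int_{2\alpha_\Eps}^{\pi-2\alpha_\Eps}\sin\theta\dtheta=\BigO(\alpha_\Eps^k)$, which dominates. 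With the correct bound $\int_0^\pi|\gamma_0|\sin\theta\dtheta=\BigO(\alpha_\Eps^k)$, your ``good'' part becomes $\BigO(\alpha_\Eps^k/\alpha_\Eps)=\BigO(\alpha_\Eps^{k-1})$, one order short of what is claimed.

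The paper avoids this by not rerunning the argument at all: it invokes linearity to write $\gamma_0=\gamma_0\,\boldsymbol{1}_{(0,2\alpha_\Eps)\cup(\pi-2\alpha_\Eps,\pi)}+\gamma_0\,\boldsymbol{1}_{(2\alpha_\Eps,\pi-2\alpha_\Eps)}$, and correspondingly $\Gamma=\Gamma_1+\Gamma_2$. The first piece is $\BigO(\alpha_\Eps^{k-1})$ times data of the type in part~(a), so $\vint{\Gamma_1}(0)=\BigO(\alpha_\Eps^{k-1})\cdot\BigO(\alpha_\Eps)=\BigO(\alpha_\Eps^k)$. The second piece has $\|\gamma_0\,\boldsymbol{1}_{\text{middle}}\|_{L^\infty}=\BigO(\alpha_\Eps^k)$, so the maximum principle alone gives $\vint{\Gamma_2}(0)=\BigO(\alpha_\Eps^k)$. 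Your approach can be repaired by exactly this splitting; without it, the direct estimate does not close.
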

\begin{proof}
(a) The proof is similar to Lemma~\ref{lem:bound-F-k}. First the maximum principle gives $\Gamma = \BigO(1)$. Using the conservation law, we have
\begin{align*}
    \int_{-\pi}^0 \Gamma \abs{\sin\theta} \dtheta
  = \int_0^\pi \Gamma  \sin\theta \dtheta
  \leq 
     C \int_0^{2\alpha_\Eps} \sin\theta \dtheta
    + C \int_{\pi - 2\alpha_\Eps}^{\pi} \sin\theta \dtheta
  = \BigO(\alpha_\Eps^{2}) \,.
\end{align*}
Therefore,
\begin{align*}
    \int_{-\pi+\alpha_\Eps}^{-\alpha_\Eps} \Gamma \dtheta
\leq
   \frac{1}{\sin\alpha_\Eps} \int_{-\pi+\alpha_\Eps}^{-\alpha_\Eps} \Gamma \abs{\sin\theta}\dtheta
\leq
   \frac{1}{\sin\alpha_\Eps} \int_{-\pi}^{0} \Gamma \abs{\sin\theta}\dtheta
= \BigO(\alpha_\Eps)\,.
\end{align*}
Similar argument applied to $-\Gamma$ then gives $\vint{\Gamma}(0) = \BigO(\alpha_\Eps)$.

\medskip

\Ni (b) The proof of part (b) follows from part (a) together with the linearity and maximum principle for the classical half-space equation.
\end{proof}

The following lemma is crucial for the estimates in this section:
\begin{lem} \label{lem:int-f-0-inhomo}
Suppose $f \in L^\infty \cap L^2(\deta\dtheta)$ satisfies the half-space equation~\eqref{eq:kinetic-classical}-\eqref{eq:kinetic-classical-2} with $h_0 \in L^\infty(0 ,\pi)$. 
Then for any $\alpha_\Eps \in (0, 1)$, we have 
\begin{align} \label{bound:int-f-0}
   \abs{\vint{f}(0)}
\leq 
  C_0 \min\vpran{\alpha_\Eps \abs{\ln\alpha_\Eps}^{1/2} \norm{h}_{L^\infty}
  + |\ln \alpha_\Eps|^{1/2}\vpran{\int_{\alpha_\Eps}^{\pi-\alpha_\Eps} \sin\theta \abs{h_0}^2 \dtheta}^{1/2},  \,\, \norm{h_0}_{L^\infty}} \,,
\end{align}
and
\begin{align} \label{bound:int-f-1}
   \abs{\vint{f}(0)}
\leq 
  C_0 \min\vpran{\alpha_\Eps \norm{h}_{L^\infty}
  + \frac{1}{\alpha_\Eps} \int_{\alpha_\Eps}^{\pi-\alpha_\Eps} \sin\theta \abs{h_0} \dtheta,  \,\, \norm{h_0}_{L^\infty}} \,,
\end{align}
where $C_0$ is a generic constant.

%
\end{lem}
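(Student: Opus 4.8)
\textit{Proof proposal.} The two estimates have the same structure, so I would prove them in parallel. Both bounds contain a $\min$ with $\norm{h_0}_{L^\infty}$: this entry is immediate from the maximum principle for the half-space equation, which forces $f(\eta,\theta)$ to stay between $\inf_{(0,\pi)}h_0$ and $\sup_{(0,\pi)}h_0$, hence $\norm{f}_{L^\infty}\le\norm{h_0}_{L^\infty}$ and in particular $\abs{\vint{f}(0)}\le\norm{h_0}_{L^\infty}$. For the other entry, I would write $2\pi\vint{f}(0)=\int_{-\pi}^{\pi}f(0,\theta)\dtheta$ and split the circle into the \emph{bad} part $(-\alpha_\Eps,\alpha_\Eps)\cup(\pi-\alpha_\Eps,\pi)\cup(-\pi,-\pi+\alpha_\Eps)$, where $\abs{\sin\theta}$ is small and which has total length $\BigO(\alpha_\Eps)$, and the \emph{good} part, on which $\abs{\sin\theta}\ge\sin\alpha_\Eps\ge c\,\alpha_\Eps$. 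On the bad part the maximum-principle bound gives a contribution $\BigO(\alpha_\Eps)\norm{h_0}_{L^\infty}$, which for $\alpha_\Eps$ small is also $\BigO(\alpha_\Eps\abs{\ln\alpha_\Eps}^{1/2})\norm{h_0}_{L^\infty}$, matching the stated forms.

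On the good part I would treat the incoming directions $\theta\in(\alpha_\Eps,\pi-\alpha_\Eps)$ and the outgoing directions $\theta\in(-\pi+\alpha_\Eps,-\alpha_\Eps)$ separately. For the incoming directions $f(0,\theta)=h_0(\theta)$, and writing $\abs{h_0}=\tfrac{1}{\sin\theta}\cdot\sin\theta\abs{h_0}$ I would bound $\int_{\alpha_\Eps}^{\pi-\alpha_\Eps}\abs{h_0}\dtheta$ either by $\tfrac{1}{\sin\alpha_\Eps}\int_{\alpha_\Eps}^{\pi-\alpha_\Eps}\sin\theta\abs{h_0}\dtheta$ (for \eqref{bound:int-f-1}) or, via Cauchy--Schwarz against the weight $\tfrac{1}{\sin\theta}$ and the elementary bound $\int_{\alpha_\Eps}^{\pi-\alpha_\Eps}\tfrac{\dtheta}{\sin\theta}=\BigO(\abs{\ln\alpha_\Eps})$, by $C\abs{\ln\alpha_\Eps}^{1/2}\bigl(\int_{\alpha_\Eps}^{\pi-\alpha_\Eps}\sin\theta\abs{h_0}^2\dtheta\bigr)^{1/2}$ (for \eqref{bound:int-f-0}); the latter estimate is where the logarithmic factor enters. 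For the outgoing directions the trace $f(0,\theta)$, $\sin\theta<0$, is not prescribed, so I would first use $\tfrac{1}{\abs{\sin\theta}}\le\tfrac{1}{\sin\alpha_\Eps}$ (together with Cauchy--Schwarz and $\int_{-\pi+\alpha_\Eps}^{-\alpha_\Eps}\tfrac{\dtheta}{\abs{\sin\theta}}=\BigO(\abs{\ln\alpha_\Eps})$ for the $L^2$ version) to reduce $\int_{-\pi+\alpha_\Eps}^{-\alpha_\Eps}\abs{f(0,\theta)}\dtheta$ to the weighted quantities $\int_{-\pi}^{0}\abs{\sin\theta}\abs{f(0,\theta)}\dtheta$ and $\int_{-\pi}^{0}\abs{\sin\theta}\,f(0,\theta)^2\dtheta$, and then transfer those to the incoming data using conservation laws: the $L^1$ identity $\del_\eta\!\int\sin\theta\abs{f}\dtheta\le 0$ (as in the proof of Lemma~\ref{lem:stability}) combined with $f\to f_\infty$ gives $\int_{-\pi}^{0}\abs{\sin\theta}\abs{f(0,\theta)}\dtheta\le\int_{0}^{\pi}\sin\theta\abs{h_0}\dtheta$, and the energy identity $\tfrac12\del_\eta\!\int\sin\theta f^2\dtheta=-\int(f-\vint{f})^2\dtheta\le 0$ — the cross term dropping because $\int_{-\pi}^{\pi}(f-\vint{f})\dtheta=0$ — together with $\int\sin\theta f_\infty^2\dtheta=0$ gives $\int_{-\pi}^{0}\abs{\sin\theta}f(0,\theta)^2\dtheta\le\int_{0}^{\pi}\sin\theta\,h_0^2\dtheta$. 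In each case I would finally peel the endpoint pieces $(0,\alpha_\Eps)$ and $(\pi-\alpha_\Eps,\pi)$ off $\int_0^{\pi}$ using $\int_0^{\alpha_\Eps}\sin\theta\dtheta\le C\alpha_\Eps^2$, leaving only the interior integral $\int_{\alpha_\Eps}^{\pi-\alpha_\Eps}\sin\theta\abs{h_0}\dtheta$ (resp.\ $\int_{\alpha_\Eps}^{\pi-\alpha_\Eps}\sin\theta\abs{h_0}^2\dtheta$) plus an $\BigO(\alpha_\Eps)\norm{h_0}_{L^\infty}$ error. Summing the bad, incoming-good and outgoing-good contributions, dividing by $2\pi$, and taking the minimum with $\norm{h_0}_{L^\infty}$ then yields \eqref{bound:int-f-0} and \eqref{bound:int-f-1}.

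The only genuinely delicate point is the justification of the two conservation/energy identities as statements on the whole half-line $\eta\in[0,\infty)$: the manipulations multiplying \eqref{eq:kinetic-classical} by $\Sgn(f)$ and by $f$ and integrating in $\theta$ are formal, and integrating the resulting differential inequalities down from $\eta=\infty$ requires that $\int\sin\theta\abs{f}\dtheta$ and $\int\sin\theta f^2\dtheta$ be absolutely continuous in $\eta$ with limit $0$ at infinity. This is exactly what the hypothesis $f\in L^\infty\cap L^2(\deta\dtheta)$ plus the known exponential decay $\abs{f-f_\infty}\le C e^{-\kappa_0\eta}$ for the classical half-space equation provide. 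Apart from this, the argument is elementary bookkeeping; one should only check that the resulting constant $C_0$ depends on neither $\Eps$ nor $h_0$, which is clear since its only ingredients are the maximum principle, the ($\Eps$-independent) conservation laws, and the bound $\int_{\alpha_\Eps}^{\pi-\alpha_\Eps}\tfrac{\dtheta}{\sin\theta}\lesssim\abs{\ln\alpha_\Eps}$.
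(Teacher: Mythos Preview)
Your proposal is correct and follows essentially the same route as the paper's proof: maximum principle for the $\norm{h_0}_{L^\infty}$ entry, the $L^1$ and $L^2$ (entropy) monotonicity identities to control the outgoing trace by the incoming data, the splitting of $(-\pi,\pi)$ into the $\BigO(\alpha_\Eps)$ neighbourhoods of $\theta=0,\pm\pi$ and the complement, and Cauchy--Schwarz against $1/\abs{\sin\theta}$ to produce the $\abs{\ln\alpha_\Eps}^{1/2}$ factor. If anything, your write-up is slightly more complete than the paper's, which only spells out the bound for $\int_{-\pi}^0\abs{f(0,\theta)}\dtheta$ and leaves the (easier) incoming contribution $\int_0^\pi\abs{h_0}\dtheta$ implicit.
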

\begin{proof}
The bound given by $\norm{h_0}_{L^\infty}$ is due to the maximum principle. To derive the other bounds, we first note that $\abs{f}$ satisfies
 \begin{align*}
    \sin\theta \frac{\del |f|}{\del \eta}
    + |f| - \vint{|f|} \leq 0 \,,  
\end{align*}
because $\text{sgn}(f) \vint{f} \leq |\vint{f}| \leq \vint{|f|}$. Therefore,
\begin{align*}
   \frac{\rm d}{\deta} \int_{-\pi}^\pi |f| \sin\theta \dtheta \leq 0 \,.
\end{align*}
We also have the entropy bound
\begin{align*}
   \frac{\rm d}{\deta} \int_{-\pi}^\pi f^2 \sin\theta \dtheta \leq 0 \,.
\end{align*}
By $\Disp \int_{-\pi}^\pi |f_\infty| \sin\theta \dtheta = \int_{-\pi}^\pi |f_\infty|^2 \sin\theta \dtheta = 0$, we have 
\begin{align*}
    \int_{-\pi}^\pi  |f(0, \theta)| \sin\theta \dtheta  \geq 0 \,,
\qquad
    \int_{-\pi}^\pi  f^2(0, \theta) \sin\theta \dtheta  \geq 0 \,.
\end{align*}
Hence,
\begin{align*}
    \int_{-\pi}^0 |f(0, \theta)| |\sin\theta| \dtheta 
\leq 
   \int_0^\pi |h_0(\theta)| \sin\theta \dtheta \,,
\qquad
    \int_{-\pi}^0 f^2(0, \theta) |\sin\theta| \dtheta 
\leq 
   \int_0^\pi h_0^2(\theta) \sin\theta \dtheta \,.
\end{align*}
Separating $(-\pi, 0)$ into two subsets $(-\pi + \alpha_\Eps, -\alpha_\Eps)$
and $(-\alpha_\Eps, 0) \cup (-\pi, -\pi+\alpha_\Eps)$, we have
\begin{align*}
   \int_{-\pi}^0 |f(0, \theta)| \dtheta
&= \int_{-\pi+\alpha_\Eps}^{-\alpha_\Eps} |f(0, \theta)| \dtheta
   + \int_{-\pi}^{-\pi+\alpha_\Eps} |f(0, \theta)| \dtheta
   + \int_{-\alpha_\Eps}^{0} |f(0, \theta)| \dtheta
\\
& \leq
    2 \alpha_\Eps \norm{h_0}_{L^\infty}
    + \frac{C_0}{\alpha_\Eps} \int_0^\pi \sin\theta |h_0(\theta)| \dtheta 
\\
& \leq
    C_0 \alpha_\Eps \norm{h_0}_{L^\infty}
    + \frac{C_0}{\alpha_\Eps} \int_{\alpha_\Eps}^{\pi-\alpha_\Eps} \sin\theta |h_0(\theta)| \dtheta \,.
\end{align*}
Similarly, 
\begin{align*}
   \int_{-\pi}^0 |f(0, \theta)| \dtheta
&\leq
    2 \alpha_\Eps \norm{h_0}_{L^\infty}
    + \vpran{\int_{-\pi + \alpha_\Eps}^{-\alpha_\Eps} \frac{1}{\abs{\sin\theta}} \dtheta}^{1/2} \vpran{\int_0^\pi \sin\theta |h_0(\theta)|^2 \dtheta}^{1/2}
\\
&\leq
    C_0 \alpha_\Eps \abs{\ln\alpha_\Eps}^{1/2} \norm{h_0}_{L^\infty}
    + C_0 \abs{\ln\alpha_\Eps}^{1/2} \vpran{\int_{\alpha_\Eps}^{\pi-\alpha_\Eps} \sin\theta |h_0(\theta)|^2 \dtheta}^{1/2} \,,
\end{align*}
which proves the desired bounds. 
\end{proof}

Now we start constructing the approximate solution $\tilde f$. Recall that for each $N \in \NN$, we want to construct $\tilde f$ such that $\tilde f \in W^{N+1, \infty}(\deta\dtheta)$. We take the following form for the function $\tilde f$ 
\begin{align} \label{def:tilde-f-1}
    \tilde f(\eta, \theta) 
               = \lambda_N R_1 + (1 - \lambda_N) R_2 
                  + \sum_{k=1}^N c_k F_k \,,
\end{align}
where the coefficients $c_1, \ldots, c_N$ and $\lambda_N$ will be chosen so that
$\tilde f$ has the desired regularity. Note that by construction $\tilde f$ satisfies the half-space equation, and its incoming data $\tilde h_0$ differs from $h_0$ only on $[0, 2 \alpha_\Eps) \cup (\pi - 2\alpha_\Eps, \pi]$.

Assume that $c_1, \ldots, c_N$ are given and satisfy 
\begin{align} \label{bound:c-F-k}
    \abs{\sum_{k=1}^N c_k \vint{F_k}(0)} = o(1) \,,
\end{align}
which we will show a-posteriori in Theorem~\ref{thm:c-k}  for any finite $N$  and $\Eps$ small enough. 
Define
\begin{align*}
   G_1(\eta, \theta) = R_1 + \sum_{k=1}^N c_k F_k  \,,
\qquad
   G_2(\eta, \theta) = R_2 + \sum_{k=1}^N c_k F_k \,.
\end{align*}
We re-write \eqref{def:tilde-f-1} such that
\begin{equation*}
  \tilde f(\eta, \theta) = \lambda_N G_1(\eta, \theta) + (1 - \lambda_N) G_2(\eta, \theta) \,.
\end{equation*}
Then by \eqref{bound:c-F-k} and the bounds for $R_1, R_2$ in
Lemma~\ref{lem:bound-F-k}, we have
\begin{align*}
    0 < \vint{G_1}(0), \vint{G_2}(0) < 1\,.
\end{align*}
Therefore, 
\begin{align*}
    G_1(0, 0^+) -  \vint{G_1}(0)  
    = G_1(0, \pi^-) -  \vint{G_1}(0)
    = 1 - \vint{G_1}(0) > 0 \,,
\\ 
    G_2(0, 0^+) -  \vint{G_2}(0)  
    = G_2(0, \pi^-) -  \vint{G_2}(0)
    = - \vint{G_2}(0) < 0 \,.
\end{align*}
Hence there exists a constant $\lambda_N = \lambda_N(c_1, \cdots, c_N) \in (0, 1)$ such that 
\begin{align} \label{def:lambda}
   \lambda_N \vpran{G_1(0, 0^+) - \vint{G_1}(0)}
   + (1 - \lambda_N) \vpran{G_2(0, 0^+) - \vint{G_2}(0)} = 0 \,.
\end{align}
The properties of $\lambda_N$ are summarized in the following lemma:
\begin{lem} \label{lem:lambda}
Let $\lambda_N$ be defined in~\eqref{def:lambda}. Then
\begin{align} \label{eq:lambda}
   \lambda_N 
  = \bigl\langle\tilde f\bigr\rangle(0) = \vint{\lambda_N G_1 + \vpran{1-\lambda_N} G_2}(0)
  = \mu_0 + \sum_{k=1}^N c_k \mu_k \,,
\end{align}
where
\begin{align*}
   \mu_0 = \frac{\vint{R_2}(0)}{1 - \vint{R_1}(0) + \vint{R_2}(0)} \,,
\qquad 
  \mu_k = \frac{\vint{F_k}(0)}{1 - \vint{R_1}(0) + \vint{R_2}(0)} \,,
\qquad
  k \geq 1 \,.
\end{align*}
Note that by the estimates of $\vint{R_1}(0)$ and $\vint{R_2}(0)$ in Lemma~\ref{lem:bound-F-k}, all the $\mu_k$'s are well-defined.
\end{lem}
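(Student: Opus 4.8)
The plan is to evaluate the defining relation~\eqref{def:lambda} at $\theta = 0^+$ and then solve the resulting affine equation for $\lambda_N$. First I would record the boundary values of the building blocks: by the construction of the incoming data, $r_1(0^+) = 1$, $r_2(0^+) = 0$ and $f_k(0^+) = \sin^k 0 = 0$, so that $G_1(0, 0^+) = 1$ and $G_2(0, 0^+) = 0$ (these are already the values used in the construction preceding the lemma). Substituting them into~\eqref{def:lambda} and rearranging gives
\begin{align*}
   \lambda_N = \lambda_N \vint{G_1}(0) + (1 - \lambda_N)\vint{G_2}(0) \,.
\end{align*}
Since $\tilde f = \lambda_N G_1 + (1-\lambda_N) G_2$ and the average $\vint{\cdot}(0)$ is linear, the right-hand side equals $\vint{\tilde f}(0)$, which is the first identity in~\eqref{eq:lambda}.

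To obtain the explicit formula, I would collect the $\lambda_N$ terms in the displayed relation to get $\lambda_N\bigl(1 - \vint{G_1}(0) + \vint{G_2}(0)\bigr) = \vint{G_2}(0)$. The one point that makes the formula clean is the cancellation
\begin{align*}
   \vint{G_2}(0) - \vint{G_1}(0) = \vint{R_2}(0) - \vint{R_1}(0) \,,
\end{align*}
valid because $G_1$ and $G_2$ differ only through $R_1$ versus $R_2$, the common term $\sum_{k=1}^N c_k F_k$ dropping out. Hence the denominator equals $1 - \vint{R_1}(0) + \vint{R_2}(0)$, independent of $c_1, \ldots, c_N$, and strictly positive since $0 < \vint{R_1}(0) < 1$ and $\vint{R_2}(0) > 0$ by Lemma~\ref{lem:bound-F-k}, so all the $\mu_k$ are well-defined. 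Expanding the numerator as $\vint{G_2}(0) = \vint{R_2}(0) + \sum_{k=1}^N c_k \vint{F_k}(0)$ and dividing yields $\lambda_N = \mu_0 + \sum_{k=1}^N c_k \mu_k$ with $\mu_0$ and $\mu_k$ exactly as stated.

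There is essentially no obstacle: the argument is two lines of linear algebra once the boundary values $G_1(0,0^+) = 1$, $G_2(0,0^+) = 0$ are in hand, together with the observation that $\sum_{k=1}^N c_k F_k$ cancels in the denominator. I would only add two remarks: first, the representation $\lambda_N = \vint{G_2}(0)/\bigl(1 - \vint{G_1}(0) + \vint{G_2}(0)\bigr)$ combined with $0 < \vint{G_1}(0), \vint{G_2}(0) < 1$ (which follows from~\eqref{bound:c-F-k} and Lemma~\ref{lem:bound-F-k}) re-confirms $\lambda_N \in (0,1)$, consistent with the intermediate-value argument preceding the lemma; and second, the affine dependence of $\lambda_N$ on $(c_1, \ldots, c_N)$ exhibited here is precisely what will be exploited when the $c_k$ are determined in Theorem~\ref{thm:c-k}.
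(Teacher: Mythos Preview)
Your proposal is correct and follows essentially the same approach as the paper: substitute $G_1(0,0^+)=1$, $G_2(0,0^+)=0$ into~\eqref{def:lambda}, recognize the result as $\lambda_N = \vint{\tilde f}(0)$, use the cancellation $\vint{G_1}(0)-\vint{G_2}(0)=\vint{R_1}(0)-\vint{R_2}(0)$ to simplify the denominator, and expand $\vint{G_2}(0)$ in the numerator. Your added remarks on $\lambda_N\in(0,1)$ and the affine dependence on the $c_k$ are useful context but not needed for the proof itself.
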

\begin{proof}
By \eqref{def:lambda}, 
\begin{align*}
   \lambda_N G_1(0, 0^+) + (1 - \lambda_N) G_2(0, 0^+)
   = \vint{\lambda_N G_1 + \vpran{1-\lambda_N} G_2}(0) \,.
\end{align*}
where $G_1(0, 0^+) = 1$ and $G_2(0, 0^+) = 0$. Hence,
\begin{align*}
   \lambda_N
&   = \vint{\lambda_N G_1 + \vpran{1-\lambda_N} G_2}(0)
   = \lambda_N \vpran{\vint{G_1}(0) - \vint{G_2}(0)} + \vint{G_2}(0)
\\
&   = \lambda_N \vpran{\vint{R_1}(0) - \vint{R_2}(0)} + \vint{G_2}(0) \,.
\end{align*}
Solving for $\lambda_N$ then gives 
\begin{align*}
   \lambda_N
   = \frac{\vint{G_2}(0)}{1 - \vint{R_1}(0) + \vint{R_2}(0)}
   = \frac{\vint{R_2}(0)}{1 - \vint{R_1}(0) + \vint{R_2}(0)}
      + \sum_{k=1}^N c_k \frac{\vint{F_k}(0)}{1 - \vint{R_1}(0) + \vint{R_2}(0)} \,,
\end{align*}
where in the last step the definition of $G_2$ is applied. 
\end{proof}

Following Lemma~\ref{lem:lambda} and the construction of $\tilde{f}$
in~\eqref{def:tilde-f-1}, we
have 
\begin{align} \label{eq:tilde-f-small-theta}
   \tilde f(0, \theta) = \bigl\langle\tilde f\bigr\rangle(0) + \sum_{k=1}^N c_k \sin^k\theta 
\qquad
  \text{for $\theta \in [0, \alpha_\Eps] \cup [\pi - \alpha_\Eps, \pi]$.}
\end{align}
Next we choose the coefficients $\{c_k\}_{k=1}^N$ such that 
$\tilde f \in W^{N+1, \infty}(\deta\dtheta)$, as in the following theorem:
\begin{thm} \label{thm:c-k}
For any given family of $R_1, R_2, F_k$, suppose $\tilde f$ is defined by \eqref{eq:tilde-f-small-theta} and $\lambda_N$ satisfies~\eqref{eq:lambda}. Then the system
\begin{align} \label{eq:c-k}
   c_1 = \vint{\frac{\tilde f - \bigl\langle\tilde f\bigr\rangle}{\sin\theta}}(0)  \,,
\qquad
   c_M = \vint{\frac{\tilde f - \bigl\langle \tilde f \bigr\rangle - \sum_{k=1}^{M-1} c_k \sin^k \theta}{\sin^M \theta}}(0) \,, \quad \text{for} \quad
    M = 2, \cdots, N \,.
\end{align}
has a unique set of $\{c_k\}_{k=1}^N$ as its solution. These $c_k$'s satisfy the bound
\begin{align} \label{bound:c-k}
   c_k = \begin{cases}
               \alpha_\Eps^{-k+1} \abs{\ln \alpha_\Eps} \,,
               & k = 1, 2 \,, \\[4pt]
               \alpha_\Eps^{-k+1} \,, & k \geq 3 
             \end{cases}
\end{align}
and
\begin{align} \label{bound:c-F-k-1}
     \abs{c_i \vint{F_i}(0)} = o(1) \,,
\qquad
     \norm{c_i F_i(0, \cdot)}_{L^\infty} = o(1) \,,
    i \geq 1 \,. 
\end{align}
Moreover,
$\tilde f$ determined by this set of
$\{c_k\}_{k=1}^N$ satisfies that $\tilde f \in W^{N+1,
  \infty}(\deta\dtheta)$.
\end{thm}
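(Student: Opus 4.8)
The plan is to read the recursion~\eqref{eq:c-k}, after substituting the Ansatz~\eqref{def:tilde-f-1} together with the formula~\eqref{eq:lambda} for $\lambda_N$, as a system of $N$ affine‑linear equations for the unknowns $c_1, \dots, c_N$, and to solve it by eliminating the unknowns one at a time, in increasing order of the index. The device that makes this work is a maximum‑principle bootstrap: \emph{if the first $M-1$ equations of~\eqref{eq:c-k} hold, then $\del_\eta^M \tilde f \in L^\infty(\deta\dtheta)$ and the $M$-th equation is meaningful}. Indeed, since $\tilde f$ solves the half‑space equation, Lemma~\ref{eq:deriv-eta-f-k} shows that $\del_\eta^M \tilde f$ is again a half‑space solution whose incoming datum at $\eta = 0$ is $(-1)^M \bigl(\tilde f(0,\theta) - \vint{\tilde f}(0) - \sum_{k=1}^{M-1} c_k \sin^k\theta\bigr)/\sin^M\theta$; by~\eqref{eq:tilde-f-small-theta} the numerator equals $\sum_{k \ge M} c_k \sin^k\theta$ on $[0, \alpha_\Eps] \cup [\pi - \alpha_\Eps, \pi]$, hence is bounded there after division by $\sin^M\theta$, while on $[\alpha_\Eps, \pi - \alpha_\Eps]$ it is bounded by $C\sin^{-M}\!\alpha_\Eps$; the maximum principle then gives $\del_\eta^M \tilde f \in L^\infty(\deta\dtheta)$, so the $M$-th equation becomes simply $c_M = (-1)^M \vint{\del_\eta^M \tilde f}(0)$ with a right‑hand side that is a well‑defined affine functional of $c_1, \dots, c_N$.

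I would then carry out the elimination. Writing $\tilde f = \mu_0 R_1 + (1-\mu_0) R_2 + \sum_j c_j G_j$ with $G_j = \mu_j(R_1 - R_2) + F_j$ (the choice of $\mu_j$ makes each $G_j$ continuous at $\theta = 0$, which tames the singular behaviour one order) and differentiating in $\eta$, the coefficient of $c_j$ in the $M$-th equation is a singular $\theta$-average of $\del_\eta^M G_j$, which I would control by splitting into the middle region $(\alpha_\Eps, \pi - \alpha_\Eps)$ — where $\abs{\sin\theta}^{-M} \le C\alpha_\Eps^{-M}$ while, by the conservation/mass computation in Lemma~\ref{lem:bound-F-k}, $F_j$ carries only $\BigO(\alpha_\Eps^{j+2})$ of $\sin\theta$-weighted mass and $\mu_j = \BigO(\alpha_\Eps^{j+1})$ — and the neighbourhoods of $\theta = 0, \pi$, where the numerator vanishes on the incoming side by construction and the outgoing side is controlled via Lemma~\ref{lem:perturbation-1} and the weighted-$L^2$ bound~\eqref{bound:int-f-0}. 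In every case this coefficient is $\BigO(\alpha_\Eps^{\delta})$ for some $\delta > 0$. Consequently, on the affine subspace cut out by the first $M-1$ equations, the $M$-th equation is affine in $c_M, \dots, c_N$ with the coefficient of $c_M$ equal to $1 + \BigO(\alpha_\Eps^\delta)$; so for $\alpha_\Eps$ small it is uniquely solvable for $c_M$ in terms of $c_{M+1}, \dots, c_N$, and iterating this from $M = 1$ to $N$ produces a unique $\{c_k\}_{k=1}^N$.

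For~\eqref{bound:c-k} I would argue by induction on $M$: granted the bounds on $c_1, \dots, c_{M-1}$, the contribution to $c_M = (-1)^M\vint{\del_\eta^M\tilde f}(0)$ coming from the $\sum_j c_j G_j$ part of $\tilde f$ is of lower order, so the size of $c_M$ is governed by $\vint{\del_\eta^M(\mu_0 R_1 + (1-\mu_0) R_2)}(0)$. The incoming datum of $\del_\eta^M R_i$ is $\BigO(\abs{\sin\theta}^{-M})$ on $(\alpha_\Eps, \pi - \alpha_\Eps)$, so the weighted-$L^2$ bound~\eqref{bound:int-f-0} of Lemma~\ref{lem:int-f-0-inhomo} gives $\abs{\vint{\del_\eta^M R_i}(0)} \le C\abs{\ln\alpha_\Eps}^{1/2}\bigl(\int_{\alpha_\Eps}^{\pi-\alpha_\Eps}\abs{\sin\theta}^{1-2M}\dtheta\bigr)^{1/2}$, which is $\BigO(\abs{\ln\alpha_\Eps})$ for $M = 1$, $\BigO(\alpha_\Eps^{-1}\abs{\ln\alpha_\Eps}^{1/2})$ for $M = 2$, and $\BigO(\alpha_\Eps^{-M+1})$ for $M \ge 3$ (the half‑logarithm being absorbed into the power of $\alpha_\Eps$). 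This establishes~\eqref{bound:c-k}, after which~\eqref{bound:c-F-k-1} is immediate from~\eqref{bound:c-k} and Lemma~\ref{lem:bound-F-k}: $\abs{c_i\vint{F_i}(0)} \le C\alpha_\Eps^{-i+1}\abs{\ln\alpha_\Eps}\cdot\alpha_\Eps^{i+1} = \BigO(\alpha_\Eps^2\abs{\ln\alpha_\Eps})$ and $\norm{c_i F_i(0,\cdot)}_{L^\infty} \le C\alpha_\Eps^{-i+1}\abs{\ln\alpha_\Eps}\cdot\alpha_\Eps^{i} = \BigO(\alpha_\Eps\abs{\ln\alpha_\Eps})$, both $o(1)$; in particular the a~priori hypothesis~\eqref{bound:c-F-k} used to define $\lambda_N$ is now justified.

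Finally, the regularity $\tilde f \in W^{N+1,\infty}(\deta\dtheta)$ follows by applying the bootstrap of the first paragraph for every $m \le N+1$: for $m = N+1$ the numerator $\tilde f - \vint{\tilde f}(0) - \sum_{k=1}^N c_k \sin^k\theta$ vanishes identically near $\theta = 0, \pi$, so $\del_\eta^{N+1}\tilde f$ has a bounded incoming datum and hence, by the maximum principle, lies in $L^\infty$; the pure $\theta$- and mixed derivatives up to order $N+1$ are then obtained exactly as in part~(b) of Lemma~\ref{lem:modification}, by differentiating the Duhamel representation of the $\tilde f$-equation and using that $\tilde h_0 \in C^{N+1}$ together with the exponential‑in‑$\eta$ decay of $\vint{\tilde f - \tilde f_\infty}$. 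The main obstacle throughout is that the singular averages in~\eqref{eq:c-k} are finite only once the matching partial $\eta$-regularity of $\tilde f$ has been secured, so the construction of the $c_k$'s, the bound~\eqref{bound:c-F-k}, and the regularity~\eqref{goal} cannot be separated — they must be advanced together in a single induction on $M$, with the maximum‑principle bootstrap on $\del_\eta^M\tilde f$ as the engine — and, secondarily, that the sharp $\alpha_\Eps$-powers in~\eqref{bound:c-k} have to be extracted precisely from Lemma~\ref{lem:int-f-0-inhomo}.
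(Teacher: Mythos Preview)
Your overall strategy — set up \eqref{eq:c-k} as an affine system in $c_1,\dots,c_N$, eliminate inductively, and bound the free terms via Lemma~\ref{lem:int-f-0-inhomo} — matches the paper's, and your bootstrap ``first $M-1$ equations $\Rightarrow$ $\del_\eta^M\tilde f\in L^\infty$'' is a clean way to phrase well-definedness. The gap is in how you bound $c_M$. You split $\tilde f=P+\sum_j c_j G_j$ with $P=\mu_0 R_1+(1-\mu_0)R_2$ and claim $c_M$ is governed by $\vint{\del_\eta^M P}(0)$, invoking Lemma~\ref{lem:int-f-0-inhomo} on ``the incoming datum of $\del_\eta^M R_i$''. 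But for $M\ge2$ these objects are not in $L^\infty$. One checks $\vint{P}(0)=\mu_0$, so $\del_\eta P$ has incoming datum $-H_0/\sin\theta$, which is bounded since $H_0(0,\theta)=0$ on $[0,\alpha_\Eps]\cup[\pi-\alpha_\Eps,\pi]$; but then the incoming datum of $\del_\eta^2 P$ near $\theta=0^+$ is $\vint{\del_\eta P}(0)/\sin\theta$, and $\vint{\del_\eta P}(0)=-\vint{H_0/\sin\theta}(0)=\BigO(|\ln\alpha_\Eps|)\neq0$ in general. Hence $\del_\eta^2 P\notin L^\infty$, and Lemma~\ref{lem:int-f-0-inhomo} (which requires $h_0\in L^\infty(0,\pi)$) cannot be applied to it. The same failure afflicts each $\del_\eta^M G_j$ individually: the pieces of your splitting do not survive $M$-fold $\eta$-differentiation, only the full $\tilde f$ does.

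The paper never separates these pieces. It keeps the subtraction intact, writing at $\eta=0$
\[
\tilde f-\bigl\langle\tilde f\bigr\rangle-\sum_{k<M}c_k\sin^k\theta
= H_0+\sum_{j<M}c_j\,(H_j-\sin^j\theta)+\sum_{j\ge M}c_j H_j,
\]
and divides by $\sin^M\theta$. Each of $H_0$, $H_j-\sin^j\theta$, and $H_j$ ($j\ge M$) vanishes on $[0,\alpha_\Eps]\cup[\pi-\alpha_\Eps,\pi]$ on the incoming side (check directly from \eqref{def:H-k}), so every quotient lies in $L^\infty(0,\pi)$ and Lemma~\ref{lem:int-f-0-inhomo} applies termwise; this is exactly Lemma~\ref{lem:beta-1}. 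After eliminating $c_1,\dots,c_{M-1}$ one arrives at $S_{M,0}=\bigl(H_0+\sum_{j<M}\eta_{M,0}^{(j)}(H_j-\sin^j\theta)\bigr)/\sin^M\theta$, and the dominant contribution to $\vint{S_{M,0}}$ is indeed $\vint{H_0/\sin^M\theta}=\BigO(\alpha_\Eps^{-M+1})$ (up to $|\ln\alpha_\Eps|$), as you intuited, with the correction terms one $\alpha_\Eps$-power smaller (Lemma~\ref{lem:bound-S-beta-M-i}). So your heuristic about which term controls $c_M$ is correct; the fix is only to work with a decomposition whose summands remain in $L^\infty$ after division by $\sin^M\theta$, namely the $H_k$'s rather than the $R_i$'s and $G_j$'s.
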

\begin{proof}
We divide the proof into four steps. \medskip

\Ni \underline{Step 1.} First we reformulate system~\eqref{eq:c-k}. Using Lemma~\ref{lem:lambda} and the definition of $\tilde f$ in~\eqref{def:tilde-f-1}, the $c_1$-equation becomes 
\begin{align*} 
    c_1 
&  =  \vint{\frac{\lambda_N R_1 + (1 - \lambda_N) R_2 
                  + \sum_{k=1}^N c_k F_k - \lambda_N}{\sin\theta}}
    = \vint{\frac{\lambda_N \vpran{R_1 - R_2 - 1} + R_2
                  + \sum_{k=1}^N c_k F_k}{\sin\theta}}
\\
& = \vint{\frac{\mu_0 \vpran{R_1 - R_2 - 1} + R_2}{\sin\theta}}
       + \sum_{k=1}^N  c_k
           \vint{\frac{\mu_k \vpran{R_1 - R_2 - 1} + F_k}{\sin\theta}} \,.
\end{align*}
For the ease of notation, we denote
\begin{align} \label{def:H-k}
    H_0 = \mu_0 \vpran{R_1 - R_2 - 1} + R_2 \,,
\qquad
    H_k = \mu_k \vpran{R_1 - R_2 - 1} + F_k
\quad \text{for} \quad k = 1, 2, \cdots, N \,.
\end{align}
Then
\begin{align} \label{eq:c-1}
    c_1 = \vint{\frac{H_0}{\sin\theta}}(0)
              + \sum_{k=1}^N c_k \vint{\frac{H_k}{\sin\theta}}(0) \,.
\end{align}
Similarly, the $c_M$-equation can be reformulated as
\begin{align} \label{eq:c-M}
   c_M 
&   = \vint{\frac{\lambda_N R_1 + (1 - \lambda_N) R_2 
                  + \sum_{k=1}^N c_k F_k - \lambda_N - \sum_{k=1}^{M-1} c_k \sin^k \theta}{\sin^M \theta}}(0) \nn
\\
& = \vint{\frac{\mu_0 \vpran{R_1 - R_2 - 1} + R_2 
                  + \sum_{k=1}^N c_k \mu_k \vpran{R_1 - R_2 - 1}
                  + \sum_{k=1}^N c_k F_k - \sum_{k=1}^{M-1} c_k \sin^k \theta}{\sin^M \theta}}(0) \,.
\end{align}
We will show that for $\Eps$ small enough, the system~\eqref{eq:c-1}-\eqref{eq:c-M} is uniquely solvable. The strategy  to solve for $c_M$'s is by inductive elimination. 


\Ni \underline{Step 2.} In this step we solve for $c_1$ in terms of $c_M$'s using~\eqref{eq:c-1}. By Lemma~\ref{lem:beta-1} which is proved later, the coefficient for $c_1$ on the right-hand side of~\eqref{eq:c-1} which is given by $\vint{\frac{H_1}{\sin\theta}}$ is of order $\BigO(\alpha_\Eps)$. Hence, for $\Eps$ small enough we can solve for $c_1$ from~\eqref{eq:c-1} and get
\begin{align} \label{soln:c-1}
    c_1 = \beta_{1,0} + \sum_{i=2}^N c_i \beta_{1, i} \,,
\qquad
    \beta_{1, i} 
    = \frac{\vint{\frac{H_i}{\sin\theta}}(0)}
              {1 - \vint{\frac{H_1}{\sin\theta}}(0)} \,,
\quad
   i = 0, 2, 3, \cdots, N \,.
\end{align}
Denote
\begin{align} \label{def:S-1}
    S_{1,i} = \frac{H_i}{\sin\theta}(0, \theta) \,, \qquad i = 0, 1, 2, 3, \cdots, N \,.
\end{align}
Then each coefficient $\beta_{1,i}$ has the form
\begin{align} \label{def:beta-1}
    \beta_{1,i} = \frac{\vint{S_{1, i}}}{1 - \vint{S_{1,1}}} \,, \qquad i = 0, 2, 3, \cdots, N \,.
\end{align}
In this notation , we have
\begin{align} \label{eq:c-1-1}
     c_1 
 = \vint{S_{1,0} + \sum_{k=1}^N c_k S_{1, k}}
     = \frac{1}{1 - \vint{S_{1, 1}}}
        \vint{S_{1,0} + \sum_{k=2}^N c_k S_{1,k}}
 = \beta_{1, 0} + \sum_{k=2}^N c_k \beta_{1, k} \,.
\end{align}


\Ni \underline{Step 3.} In this step, we derive general formulas for $c_M$ for $M \geq 2$. The formulas are inductive. We claim that if we let $S_{1,i}, \beta_{1,i}$ be defined as in~\eqref{def:S-1}--\eqref{def:beta-1}, and let
\begin{align} \label{def:S-k}
    S_{M,i} = \frac{S_{M-1, i} - \beta_{M-1, i} (1 - S_{M-1, M-1}) }{\sin\theta} \,,
\qquad
    \beta_{M, i} = \frac{\vint{S_{M, i}}}{1 - \vint{S_{M, M}}}\,,
\end{align}
for $M = 2, \cdots, N$, $i = 0, M, \cdots, N$, then
\begin{align} \label{soln:c-M}
     c_M 
&   = \vint{S_{M,0} + \sum_{k=M}^N c_k S_{M, k}}
     = \frac{1}{1 - \vint{S_{M, M}}}
        \vint{S_{M,0} + \sum_{k=M+1}^N c_k S_{M,k}} \nn
\\
&   = \beta_{M, 0} + \sum_{k=M+1}^N c_k \beta_{M, k} \,,
\qquad
     M = 2, \cdots, N -1 \,,
\\
  c_N & = \beta_{N, 0} \,. \label{soln:c-N}
\end{align}
Note that for \eqref{def:S-k} to make sense, we need to show that $\vint{S_{M,i}}(0)$ is well-defined and $\vint{S_{M, M}} \neq 1$.
These will be proved in Lemma~\ref{integrability:S-M-i} and Lemma~\ref{lem:bound-S-beta-M-i}.

We now prove \eqref{soln:c-M}-\eqref{soln:c-N} using an induction argument. First \eqref{soln:c-M} holds for $M=1$ by the definition of $S_{1, i}$ in~\eqref{def:S-1}. Suppose~\eqref{soln:c-M} holds for $M$. Then we check the equation for $c_{M+1}$, which has the form
\begin{align*}
   c_{M+1}
&= \vint{\frac{\mu_0 \vpran{R_1 - R_2 - 1} + R_2 
                  + \sum_{k=1}^N c_k \mu_k \vpran{R_1 - R_2 - 1}
                  + \sum_{k=1}^N c_k F_k - \sum_{k=1}^{M} c_k \sin^k \theta}{\sin^{M+1} \theta}}(0)
\\
& \hspace{-1cm}
       = \vint{\frac{\mu_0 \vpran{R_1 - R_2 - 1} + R_2 
                  + \sum_{k=1}^N c_k \mu_k \vpran{R_1 - R_2 - 1}
                  + \sum_{k=1}^N c_k F_k - \sum_{k=1}^{M-1} c_k \sin^k \theta}{\sin^{M+1} \theta} - \frac{c_M}{\sin\theta}}
\\
& \hspace{-1cm}
    = \vint{\frac{S_{M,0} + \sum_{k=M}^N c_k S_{M, k} - c_M}{\sin\theta}}
    = \vint{\frac{S_{M,0} + \sum_{k=M+1}^N c_k S_{M, k} + 
               c_M S_{M, M}- c_M}{\sin\theta}}
\\
& \hspace{-1cm}
    = \vint{\frac{S_{M,0} + \sum_{k=M+1}^N c_k S_{M, k} - 
              \frac{1}{1-\vint{S_{M, M}}}\vint{S_{M,0} + \sum_{k=M+1}^N c_k S_{M,k}} \vpran{1 - S_{M, M}} }{\sin\theta}}
\\
& \hspace{-1cm}
  = \vint{\frac{S_{M, 0} - \frac{\vint{S_{M,0}}}{1 - \vint{S_{M, M}}} \vpran{1 - S_{M, M}}}{\sin\theta}} 
     + \sum_{k=M+1}^N 
          c_k \vint{\frac{S_{M, k} - \frac{\vint{S_{M,k}}}{1 - \vint{S_{M, M}}} \vpran{1 - S_{M, M}}}{\sin\theta}} \,.
\end{align*}
Hence \eqref{soln:c-M} holds for $M+1$. Therefore it holds for any $M \geq 1$. 
We can then solve for $c_N, c_{N-1}, \cdots, c_1$ in order, which proves that there exists a unique set of $\{c_k\}_{k=1}^N$ such that~\eqref{eq:c-k} and~\eqref{eq:tilde-f-small-theta} hold.

\medskip

\Ni \underline{Step 4.} By the estimate of $\beta_{M,i}$ in Lemma~\ref{lem:bound-S-beta-M-i}, we have 
\begin{align*}
   \beta_{M,k} = \BigO(\alpha_\Eps^{k+2-M})= o(1) \,,
\qquad k \geq M \,.
\end{align*} 
Therefore, by \eqref{soln:c-M} and \eqref{soln:c-N}, we have
\begin{align*}
    c_1 = \BigO\vpran{\beta_{1,0}} = \BigO\vpran{\abs{\ln\alpha_\Eps}} \,,
\quad
    c_2 = \BigO\vpran{\beta_{2,0}} = \BigO\vpran{\frac{1}{\alpha_\Eps}\abs{\ln\alpha_\Eps}} \,,
\quad
    c_i = \BigO\vpran{\beta_{i,0}} = \BigO\vpran{\frac{1}{\alpha_\Eps^{i-1}}} \,,
\quad
   i \geq 3\,.
\end{align*}
Hence for $k = 1, 2$ and $i \geq 3$,
\begin{align*}
  \abs{c_k \vint{F_k}(0)} = \BigO\vpran{\alpha_\Eps^2 \abs{\ln\alpha_\Eps}} \,,
\qquad
   \abs{c_i \vint{F_i}(0)} = \BigO\vpran{\alpha_\Eps^{-i+1+i+1}} = \BigO\vpran{\alpha_\Eps^2} \,,
\\
  \norm{c_k F_k(0, \cdot)}_{L^\infty} = \BigO\vpran{\alpha_\Eps \abs{\ln\alpha_\Eps}} \,,
\qquad
   \norm{c_i F_i(0, \cdot)}_{L^\infty} = \BigO\vpran{\alpha_\Eps^{-i+1+i}} = \BigO\vpran{\alpha_\Eps} \,,
\end{align*}
which proves \eqref{bound:c-F-k-1}. 
\end{proof}

Now we prove the lemmas applied in the proof of Theorem~\ref{thm:c-k}.

\begin{lem} \label{lem:beta-1}
Let $H_k$ be defined as in~\eqref{def:H-k}. Then 

\Ni (a) the integral $\Disp\vint{\frac{H_k}{\sin\theta}}(0)$ is well-defined for all $k \geq 0$.

\Ni (b) For any $M \geq 1$, we have
\begin{align} \label{bound:H-0-sin-M}
   \frac{H_0}{\sin^M\theta}
   = \begin{cases}
       0, & \theta \in (0, \alpha_\Eps) \cup (\pi - \alpha_\Eps, \pi) \,, \\[2pt]
       \BigO\vpran{\sin^{-M} \theta}=\BigO(\alpha_\Eps^{-M}), & \theta \in (\alpha_\Eps, \pi - \alpha_\Eps) \,.
      \end{cases}
\end{align}
Moreover, if $f$ is the solution to the half-space equation with incoming data $\frac{H_0}{\sin^M\theta}$, Then
\begin{align*}
   \abs{\vint{f}(0)} 
   = \begin{cases}
     \BigO\vpran{\frac{1}{\alpha_\Eps^{M-1}}\abs{\ln \alpha_\Eps}} \,, 
        & M = 1, 2\,, \\[4pt]
     \BigO\vpran{\frac{1}{\alpha_\Eps^{M-1}}} \,, 
        & M \geq 3\,.
       \end{cases}
\end{align*}

\Ni (c) For any $M \geq 2$ and $1 \leq j \leq M-1$, we have
\begin{align} \label{bound:H-j-sin-M}
   \frac{H_j - \sin^j \theta}{\sin^M\theta}
   = \begin{cases}
       0, & \theta \in (0, \alpha_\Eps) \cup (\pi - \alpha_\Eps, \pi) \,, \\[2pt]
       \BigO\vpran{\sin^{-M+j} \theta} = \BigO(\alpha_\Eps^{-M+j}), & \theta \in (\alpha_\Eps, \pi - \alpha_\Eps) \,.
      \end{cases}
\end{align}
Moreover, if $f$ is the solution to the half-space equation with incoming data $\frac{H_j - \sin^j \theta}{\sin^M\theta}$, Then
\begin{align*}
   \abs{\vint{f}(0)} 
   = \begin{cases}
     \BigO\vpran{\alpha_\Eps^{j-M+1}} \,, 
        & M \geq 3 \,, \quad 1 \leq j \leq M-3\,, \\[4pt]
     \BigO\vpran{\frac{1}{\alpha_\Eps} \abs{\ln \alpha_\Eps}} \,,
        & M \geq 3 \,, \quad j = M-2 \,, \\[4pt]
      \BigO\vpran{\abs{\ln \alpha_\Eps}}\,, 
        & M \geq 2 \,, \quad j = M-1 \,.
       \end{cases}
\end{align*}

\Ni (d) For any $M \geq 1$ and $k \geq M$, we have
\begin{align} \label{bound:H-k-sin-M}
   \frac{H_k}{\sin^M\theta}
   = \begin{cases}
       \BigO(\alpha_\Eps^{k-M}), & \theta \in (0, 2\alpha_\Eps) \cup (\pi - 2\alpha_\Eps, \pi) \,, \\[2pt]
       \BigO(\alpha_\Eps^{k-M+1}), & \theta \in (2 \alpha_\Eps, \pi - 2 \alpha_\Eps) \,.
      \end{cases}
\end{align}
Hence by Lemma~\ref{lem:perturbation-1} if $f$ is the solution to the half-space equation with incoming data $\frac{H_k}{\sin^M\theta}$, Then
\begin{align*}
   \abs{\vint{f}}(0)
= \BigO(\alpha_\Eps^{k-M+1}) \,,
\qquad
   M \geq 1 \,,
\quad
   k \geq M \,.
\end{align*}
\end{lem}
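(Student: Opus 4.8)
The plan is to push everything back to the \emph{incoming data} of $H_0$ and $H_k$, which is explicit, together with the a priori information already available: the maximum principle, Lemma~\ref{lem:bound-F-k} (which gives $0<\vint{R_1}(0),\vint{R_2}(0)<1$, $\norm{F_k}_{L^\infty}=\BigO(\alpha_\Eps^k)$ and, via Lemma~\ref{lem:lambda}, $\mu_k=\BigO(\alpha_\Eps^{k+1})$), Lemma~\ref{lem:int-f-0-inhomo} for parts (b)--(c), and Lemma~\ref{lem:perturbation-1} for part (d). The starting observation is that constants solve~\eqref{eq:kinetic-classical}, so each $H_0=\mu_0(R_1-R_2-1)+R_2$ and $H_k=\mu_k(R_1-R_2-1)+F_k$ is itself a solution of the half-space equation. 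Reading off the incoming data from~\eqref{def:f-1-2} and the definition of $f_k$, on $(0,\alpha_\Eps)\cup(\pi-\alpha_\Eps,\pi)$ we have $r_1\equiv1$, $r_2\equiv0$, $f_k\equiv\sin^k\theta$, hence
\begin{align*}
   H_0(0,\theta)\equiv0,
\qquad
   H_k(0,\theta)\equiv\sin^k\theta
\qquad
   \text{on }(0,\alpha_\Eps)\cup(\pi-\alpha_\Eps,\pi),
\end{align*}
while on $(2\alpha_\Eps,\pi-2\alpha_\Eps)$, where $r_1=r_2=h_0$ and $f_k\equiv0$, we get $H_0(0,\theta)=h_0(\theta)-\mu_0$ and $H_k(0,\theta)\equiv-\mu_k$.

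A short computation using the formulas for $\mu_0,\mu_k$ from Lemma~\ref{lem:lambda} and the identity $\vint{R_1}(0)-\vint{R_2}(0)-1=-\bigl(1-\vint{R_1}(0)+\vint{R_2}(0)\bigr)$ gives $\vint{H_0}(0)=\vint{H_k}(0)=0$ for every $k\ge0$ (this is exactly how the regularization was designed). Combining this with the integral representation of the outgoing trace used in the proof of Lemma~\ref{lem:modification} (the $\sin\theta<0$ branch), the degenerate limits satisfy $H_j(0,0^-)=H_j(0,-\pi^+)=\vint{H_j}(0)=0$, and one obtains a modulus-of-continuity bound $|H_j(0,\theta)|\lesssim|\sin\theta|^\gamma$ (some $\gamma\in(0,1]$, up to a logarithm) near $\theta=0^-,-\pi^+$, inherited from the Hölder continuity of $\eta\mapsto\vint{H_j}(\eta)$ up to $\eta=0$. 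Part (a) now follows: on the incoming side $\theta\in(0,\pi)$ the ratio $H_j(0,\theta)/\sin\theta$ equals $0$ (if $j=0$) or $\sin^{j-1}\theta$ (if $j\ge1$) on $(0,\alpha_\Eps)\cup(\pi-\alpha_\Eps,\pi)$ and is bounded by $\norm{H_j}_{L^\infty}/\sin\alpha_\Eps$ on $(\alpha_\Eps,\pi-\alpha_\Eps)$; on the outgoing side the numerator is bounded and vanishes at $\theta=0^-,-\pi^+$ at rate $|\sin\theta|^\gamma$; hence $H_j(0,\cdot)/\sin\theta\in L^1(-\pi,\pi)$.

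Parts (b)--(c) are pointwise bookkeeping followed by Lemma~\ref{lem:int-f-0-inhomo}. For (b), $H_0/\sin^M\theta$ vanishes on $(0,\alpha_\Eps)\cup(\pi-\alpha_\Eps,\pi)$ and, its numerator being bounded while $\sin\theta\ge\sin\alpha_\Eps$ on $(\alpha_\Eps,\pi-\alpha_\Eps)$, is $\BigO(\sin^{-M}\theta)=\BigO(\alpha_\Eps^{-M})$ there. For (c), $H_j-\sin^j\theta$ vanishes on $(0,\alpha_\Eps)\cup(\pi-\alpha_\Eps,\pi)$ (since $H_j\equiv\sin^j\theta$ there), and using $\norm{F_j}_{L^\infty},\mu_j=\BigO(\alpha_\Eps^j)$ together with $\sin^j\theta\gtrsim\alpha_\Eps^j$ on $(\alpha_\Eps,\pi-\alpha_\Eps)$, one has $|H_j-\sin^j\theta|\le|H_j|+\sin^j\theta\lesssim\sin^j\theta$, so $(H_j-\sin^j\theta)/\sin^M\theta=\BigO(\sin^{j-M}\theta)$. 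Substituting these into Lemma~\ref{lem:int-f-0-inhomo} and using $\int_{\alpha_\Eps}^{\pi-\alpha_\Eps}\sin^p\theta\,\dtheta\sim\alpha_\Eps^{p+1}$ for $p<-1$, $\sim|\ln\alpha_\Eps|$ for $p=-1$, and $\sim1$ for $p>-1$, one selects the $L^2$-form of the bound in the cases whose target carries a $|\ln\alpha_\Eps|$-factor (i.e.\ $M=1$ in (b), and $j=M-1$ in (c)) and the $L^1$-form otherwise; each choice reproduces exactly the stated estimate for $|\vint{f}(0)|$. For part (d): the incoming data $H_k/\sin^M\theta$ equals $\sin^{k-M}\theta=\BigO(\alpha_\Eps^{k-M})$ on $(0,\alpha_\Eps)$, is $\BigO(\alpha_\Eps^{k-M})$ on $(\alpha_\Eps,2\alpha_\Eps)$ (numerator $\BigO(\alpha_\Eps^k)$, $\sin\theta\sim\alpha_\Eps$), and equals $-\mu_k/\sin^M\theta=\BigO(\alpha_\Eps^{k+1}/\alpha_\Eps^M)=\BigO(\alpha_\Eps^{k-M+1})$ on $(2\alpha_\Eps,\pi-2\alpha_\Eps)$, with the symmetric behaviour near $\pi$; this is precisely the hypothesis of Lemma~\ref{lem:perturbation-1}(b) with its index equal to $k-M+1$, giving $|\vint{f}(0)|=\BigO(\alpha_\Eps^{k-M+1})$.

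The only genuinely delicate point is the modulus-of-continuity estimate used above: converting the qualitative identity $H_j(0,0^-)=0$ into the quantitative bound $|H_j(0,\theta)|\lesssim|\sin\theta|^\gamma$ near $\theta=0^-,-\pi^+$ requires Hölder (or Lipschitz) regularity of $\eta\mapsto\vint{H_j}(\eta)$ up to the boundary $\eta=0$, which has to be imported from (or re-derived via) the regularity theory for the Milne/half-space problem. Once that input is in place, the rest of Lemma~\ref{lem:beta-1} is purely the algebra of the explicit incoming data together with Lemmas~\ref{lem:bound-F-k}, \ref{lem:int-f-0-inhomo} and \ref{lem:perturbation-1}.
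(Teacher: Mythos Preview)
Your argument for parts (b)--(d) is essentially the paper's: pointwise computation of the incoming data on the three subintervals, then feeding the result into Lemma~\ref{lem:int-f-0-inhomo} (for (b)--(c)) or Lemma~\ref{lem:perturbation-1} (for (d)). That part is fine.

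The gap is in part (a), and you have correctly flagged it yourself. To make $\vint{H_k/\sin\theta}(0)$ well-defined you try to control the \emph{outgoing} trace $H_k(0,\theta)$ for $\sin\theta<0$ directly, arguing that $H_k(0,0^-)=\vint{H_k}(0)=0$ and then invoking a H\"older bound $|H_k(0,\theta)|\lesssim|\sin\theta|^\gamma$ derived from H\"older regularity of $\eta\mapsto\vint{H_k}(\eta)$ at $\eta=0$. That regularity is not established anywhere in the paper, and trying to prove it runs into a circularity: $\partial_\eta\vint{H_k}=\vint{\partial_\eta H_k}=-\vint{(H_k-\vint{H_k})/\sin\theta}$, which is exactly the quantity whose finiteness you are trying to show.

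The paper avoids this entirely by a one-line identity you missed. Since $\vint{H_k}(0)=0$ and $H_k$ solves the half-space equation,
\[
   \frac{H_k(0,\theta)}{\sin\theta}
   \;=\;\frac{H_k(0,\theta)-\vint{H_k}(0)}{\sin\theta}
   \;=\;-\frac{\partial H_k}{\partial\eta}(0,\theta)
   \qquad\text{for all }\theta\in(-\pi,\pi).
\]
But $\partial_\eta H_k$ is \emph{itself} a solution of the half-space equation, so by the maximum principle it is bounded on all of $(-\pi,\pi)$ as soon as its incoming data on $(0,\pi)$ is bounded --- and that incoming data is the explicit expression you already wrote down. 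This gives $H_k(0,\cdot)/\sin\theta\in L^\infty(-\pi,\pi)$ (not merely $L^1$) with no appeal to boundary regularity theory. Replace your outgoing-trace argument with this identity and the proof closes.
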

\begin{proof}
(a) In order to prove that the integral terms $\Disp \vint{\frac{H_k}{\sin\theta}}$ are well-defined, we show that each $\Disp\frac{H_k}{\sin\theta}$ at $\eta=0$ is bounded on $(-\pi, \pi)$ for any $k \geq 0$. 
By the definitions of $R_1, R_2$, and $F_k$, each $H_k$ is a solution to the half-space equation. Moreover, by the definition of the $\mu_k$'s, we have
\begin{align*}
   \vint{H_0}(0)
= \frac{1}{1 - \vint{R_1}(0) + \vint{R_2}(0)}
    \vint{\vint{R_2} \vpran{R_1 - R_2 - 1} - \vint{R_1 - R_2 - 1} R_2} \big|_{\eta=0}
= 0 \,.
\end{align*}
and
\begin{align*}
   \vint{H_k}(0)
= \frac{1}{1 - \vint{R_1}(0) + \vint{R_2}(0)}
    \vint{\vint{F_k} \vpran{R_1 - R_2 - 1} - \vint{R_1 - R_2 - 1} F_k} \big|_{\eta=0}
= 0 \,,
\qquad 
   k \geq 1 \,.
\end{align*}
Therefore, 
\begin{align} \label{eq:H-k-sin}
   \frac{H_k(0, \theta)}{\sin\theta}
   = \frac{H_k(0, \theta) - \vint{H_k}(0)}{\sin\theta}
   = -\frac{\del H_k}{\del \eta}(0, \theta) \,,
\qquad
    k = 0, 1, \cdots, N \,.
\end{align}
By the maximal principle, since $\Disp \frac{\del H_k}{\del \eta}$ solves the half-space equation, we only need to show that the incoming data for $\Disp \frac{\del H_k}{\del \eta}$ is bounded for $\theta \in (0, \pi)$ (its bound depends on $\alpha_\Eps$). 
By the definition of $H_k, R_1, R_2, F_k$, we have
\begin{align*}
    - \frac{\del H_0}{\del \eta}(0, \theta)
    = \frac{H_0(0, \theta)}{\sin\theta}
    = \begin{cases}
        \frac{\vpran{\mu_0 \vpran{R_1 - R_2 - 1} + R_2}(0, \theta)}{\sin\theta}
    = \frac{R_2(0, \theta)}{\sin\theta} = 0 \,, 
       &  \theta \in (0, \alpha_\Eps) \cup (\pi - \alpha_\Eps, \pi) \,, \\[4pt]
       \frac{\vpran{\mu_0 \vpran{R_1 - R_2 - 1} + R_2}(0, \theta)}{\sin\theta}
       = \BigO\vpran{\frac{1}{\sin\theta}}
       = \BigO\vpran{\frac{1}{\alpha_\Eps}} \,,
       & \theta \in (\alpha_\Eps, \pi - \alpha_\Eps) \,,
       \end{cases}
\end{align*}
and for each $k \geq 1$,
\begin{align*}
     -\frac{\del H_k}{\del \eta}\Big|_{\eta=0}
    = \frac{H_k(0, \theta)}{\sin\theta}
    = \begin{cases}
        \frac{\vpran{\mu_k \vpran{R_1 - R_2 - 1} + F_k}(0, \theta)}{\sin\theta}
    = \frac{F_k(0, \theta)}{\sin\theta} 
    = \BigO\vpran{\sin^{k-1}\theta} \,, 
       &  \theta \in (0, 2\alpha_\Eps) \cup (\pi - 2\alpha_\Eps, \pi) \,, \\[4pt]
       \frac{\vpran{\mu_k \vpran{R_1 - R_2 - 1} + F_k}(0, \theta)}{\sin\theta}
       = \BigO\vpran{\frac{\alpha_\Eps^{k+1}}{\sin\theta}}
       = \BigO\vpran{\alpha_\Eps^k} \,,
       & \theta \in (2\alpha_\Eps, \pi - 2\alpha_\Eps) \,,
       \end{cases}
\end{align*}
which shows $\frac{\del H_k}{\del \eta} \in L^\infty(\deta\dtheta)$ for any $k \geq 0$. Therefore $\vint{\frac{\del H_k}{\del \eta}}(0)$ is well-defined. 
%

\medskip

\Ni (b) The bounds in \eqref{bound:H-0-sin-M} follow directly from the definition of $H_0$. If $M=1$, then by~\eqref{bound:int-f-0} in Lemma~\ref{lem:int-f-0-inhomo}, we have
\begin{align*}
    \abs{\vint{f}(0)}
\leq
  C_0 \vpran{\alpha_\Eps \abs{\ln \alpha_\Eps}^{1/2} \norm{\frac{H_0}{\sin\theta}}_{L^\infty}
  + \vpran{|\ln \alpha_\Eps|}^{1/2}\vpran{\int_{\alpha_\Eps}^{\pi-\alpha_\Eps} \sin\theta \abs{\frac{H_0}{\sin\theta}}^2 \dtheta}^{1/2}}
= \BigO\vpran{\abs{\ln \alpha_\Eps}} \,.
\end{align*}
If $M \geq 2$, then by~\eqref{bound:int-f-1} in Lemma~\ref{lem:int-f-0-inhomo}, we have
\begin{align*}
    \abs{\vint{f}(0)}
\leq
  C_0 \vpran{\alpha_\Eps \norm{\frac{H_0}{\sin\theta}}_{L^\infty}
  + \frac{1}{\alpha_\Eps} \int_{\alpha_\Eps}^{\pi-\alpha_\Eps} \sin\theta \abs{\frac{H_0}{\sin^M\theta}} \dtheta}
= \begin{cases}
     \BigO\vpran{\frac{1}{\alpha_\Eps} \abs{\ln \alpha_\Eps}} \,, & M = 2 \,, \\[4pt]
     \BigO\vpran{\frac{1}{\alpha_\Eps^{M-1}}} \,, & M \geq 3 \,.
   \end{cases}
\end{align*}

\medskip
\Ni (c) The bounds in \eqref{bound:H-j-sin-M} also directly comes from the definition of $H_j$. For the bound of $\vint{f}(0)$ when $1 \leq j \leq M-2$, we have
\begin{align*}
  \norm{\frac{H_j - \sin^j \theta}{\sin^M\theta}}_{L^\infty(0, \pi)}
  = \BigO\vpran{\alpha_\Eps^{j-M}} \,,
\qquad
   \int_{\alpha_\Eps}^{\pi-\alpha_\Eps} \sin\theta \abs{\frac{H_j - \sin^j \theta}{\sin^M \theta}} \dtheta
   = \begin{cases}
        \BigO\vpran{\alpha_\Eps^{j- M+2}} \,, & j \leq M-3 \,, \\[4pt]
        \BigO\vpran{\abs{\ln \alpha_\Eps}} \,, & j = M - 2 \,.
     \end{cases}
\end{align*}
Hence by ~\eqref{bound:int-f-1} in Lemma~\ref{lem:int-f-0-inhomo}, we have
\begin{align*}
    \abs{\vint{f}}(0) 
    = \begin{cases}
          \BigO\vpran{\alpha_\Eps^{j- M+1}} \,,
       &M \geq 3 \,, \quad   1 \leq j \leq M-2 \,, \\[4pt]
          \BigO\vpran{\frac{1}{\alpha_\Eps} \abs{\ln \alpha_\Eps}} \,,
        & M \geq 3 \,, \quad j = M - 2 \,.
       \end{cases}
\end{align*}
In the case when $j = M-1$, we have
\begin{align*}
  \norm{\frac{H_j - \sin^j \theta}{\sin^M\theta}}_{L^\infty(0, \pi)}
  = \BigO\vpran{\frac{1}{\alpha_\Eps}} \,,
\qquad
   \vpran{\int_{\alpha_\Eps}^{\pi-\alpha_\Eps} \sin\theta \abs{\frac{H_j - \sin^j \theta}{\sin^M \theta}}^2 \dtheta}^{1/2}
   = \BigO\vpran{\abs{\ln \alpha_\Eps}^{1/2}} \,.
\end{align*}
Thus we have $\abs{\vint{f}}(0) = \BigO\vpran{\abs{\ln \alpha_\Eps}}$ for $j = M - 1$.

\medskip
\Ni (d) 
First we have
\begin{align*}
  \abs{\frac{H_k(0, \theta)}{\sin^M\theta}}_{\eta=0}
=\BigO(\sin^{k-M}(\theta))
= \BigO\vpran{\alpha^{k-M}_\Eps} \,, 
\qquad
   \theta \in (0, \alpha_\Eps) \cup (\pi - \alpha_\Eps, \pi) \,.
\end{align*}
If $\theta \in (\alpha_\Eps, 2 \alpha_\Eps) \cup (\pi - 2\alpha_\Eps, \pi - \alpha_\Eps)$, then we have
\begin{align*}
   \frac{H_k(0, \theta)}{\sin^M\theta}
 = \frac{1}{1 - \vint{R_1}(0) + \vint{R_2}(0)}
    \frac{\vpran{\vint{F_k} \vpran{R_1 - R_2 - 1} - \vint{R_1 - R_2 - 1} F_k}}{\sin^M\theta}
 = \BigO(\alpha_\Eps^{k-M}) \,,
\end{align*}
since $0 \leq F_k \leq \alpha_\Eps^k$ and $\vint{F_k} = \BigO(\alpha_\Eps^{k+1})$ for $k \geq 1$.
Lastly, for $\theta \in (2\alpha_\Eps, \pi-2\alpha_\Eps)$, we have
\begin{align*}
   \frac{H_k(0, \theta)}{\sin^M\theta}
= - \frac{1}{1 - \vint{R_1}(0) + \vint{R_2}(0)}
    \frac{\vint{F_k} }{\sin^M\theta}
= \BigO(\alpha_\Eps^{k-M+1}) \,,
\end{align*}
where once again we have applied $\vint{F_k} = \BigO(\alpha_\Eps^{k+1})$. 
Hence, by Lemma~\ref{lem:perturbation-1} we have
\begin{align*}
  \abs{\vint{f}}(0) 
  = \BigO(\alpha_\Eps^{k-M}) \,,
\qquad
   k \geq 1\,.
\end{align*}
\end{proof}

In the following lemma we show that $\vint{S_{M, i}}$ is well-defined for any $M = 1, \cdots, N$ and $i = 0, M, \cdots, N$ and derive its explicit bound. 
\begin{lem} \label{integrability:S-M-i}
Let $S_{M, i}$ and $\beta_{M,i}$ be defined in~\eqref{def:S-1}, \eqref{def:beta-1},  and~\eqref{def:S-k} for each $M = 1, \cdots, N$ and $i = 0, M, \cdots, N$. Then 
\begin{itemize}
\item[(a)]
each $S_{M, i}$ is the restriction of a solution to the half-space equation at $\eta=0$.

\item[(b)] 
There exists $\eta_{M,i}^{(j)}$ with $1 \leq j \leq M-1$ such that
\begin{align} \label{eq:S-M-i-explicit}
    S_{M, i}
= \frac{H_i + \sum_{j=1}^{M-1} \eta_{M, i}^{(j)} \vpran{H_j - \sin^j \theta}}{\sin^M\theta} \,,
\qquad
    i = 0, M, \cdots, N \,, 
\quad
    M \geq 1 \,.
\end{align}
where the case with $M=1$ reduces to $\eta_{1, i}^{(0)} = 0$ or equivalently, 
\begin{align} \label{eq:S-1-i-explicit}
    S_{1, i}
   = \frac{H_i}{\sin\theta} \,.
\end{align}

\item[(c)] Each $\vint{S_{M, i}}$ is well-defined. 

%
\end{itemize}
\end{lem}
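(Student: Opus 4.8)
The plan is to prove (a), (b), (c) simultaneously by induction on $M$, establishing the explicit representation~\eqref{eq:S-M-i-explicit} first at each level and then reading off (a) and (c) from it together with the decay estimates of Lemma~\ref{lem:beta-1}. The mechanism driving everything is the elementary fact that if $g$ solves~\eqref{eq:kinetic-classical} with $\vint{g}(0)=0$, then $\partial_\eta g$ again solves~\eqref{eq:kinetic-classical}, and evaluating the equation at $\eta=0$ gives $g(0,\theta)/\sin\theta=-\partial_\eta g(0,\theta)$ for $\sin\theta\ne 0$; so dividing the restriction of a mean-zero solution by $\sin\theta$ produces the restriction of another solution, whose boundedness is then supplied by the maximum principle once its incoming data is controlled.

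\emph{Base case.} For $M=1$ we have $S_{1,i}=H_i/\sin\theta$ by~\eqref{def:S-1}, which is~\eqref{eq:S-1-i-explicit} (the sum in~\eqref{eq:S-M-i-explicit} being empty). From the proof of Lemma~\ref{lem:beta-1}(a), each $H_i$ solves~\eqref{eq:kinetic-classical}, $\vint{H_i}(0)=0$, and $\partial_\eta H_i(0,\cdot)\in L^\infty$; applying the mechanism with $g=H_i$ gives $S_{1,i}=-\partial_\eta H_i(0,\cdot)$, which is the restriction of the bounded solution $-\partial_\eta H_i$, so $\vint{S_{1,i}}(0)$ is well-defined.

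\emph{Inductive step.} Assume (a)--(c) for all indices $\le M-1$. We invoke $\vint{S_{M-1,M-1}}(0)=o(1)$, hence $\ne 1$, from Lemma~\ref{lem:bound-S-beta-M-i}, so $\beta_{M-1,i}$ in~\eqref{def:S-k} is well-defined. For (b) at level $M$: insert the inductive formula~\eqref{eq:S-M-i-explicit} for $S_{M-1,i}$ and $S_{M-1,M-1}$ into $S_{M,i}=(S_{M-1,i}-\beta_{M-1,i}(1-S_{M-1,M-1}))/\sin\theta$, rewrite $1-S_{M-1,M-1}$ as $\bigl(-(H_{M-1}-\sin^{M-1}\theta)-\sum_{j=1}^{M-2}\eta^{(j)}_{M-1,M-1}(H_j-\sin^j\theta)\bigr)/\sin^{M-1}\theta$, and collect terms; this yields~\eqref{eq:S-M-i-explicit} at level $M$ with $\eta^{(M-1)}_{M,i}=\beta_{M-1,i}$ and $\eta^{(j)}_{M,i}=\eta^{(j)}_{M-1,i}+\beta_{M-1,i}\eta^{(j)}_{M-1,M-1}$ for $1\le j\le M-2$. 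For (a) and (c) at level $M$: by the inductive hypothesis there are bounded solutions $\mathcal S_{M-1,i}$ and $\mathcal S_{M-1,M-1}$ of~\eqref{eq:kinetic-classical} with restrictions $S_{M-1,i}$ and $S_{M-1,M-1}$ at $\eta=0$; since the constant $1$ is also such a solution, $g:=\mathcal S_{M-1,i}-\beta_{M-1,i}(1-\mathcal S_{M-1,M-1})$ is a bounded solution, and by the definition of $\beta_{M-1,i}$ we get $\vint{g}(0)=\vint{S_{M-1,i}}(0)-\beta_{M-1,i}(1-\vint{S_{M-1,M-1}}(0))=0$. The mechanism then gives $S_{M,i}(\theta)=g(0,\theta)/\sin\theta=-\partial_\eta g(0,\theta)$, so $S_{M,i}$ is the restriction of the solution $\mathcal S_{M,i}:=-\partial_\eta g$ of~\eqref{eq:kinetic-classical}. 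Its incoming data is $S_{M,i}(\theta)$ for $\sin\theta>0$, which by the formula~\eqref{eq:S-M-i-explicit} just proved equals $\sin^{i-M}\theta$ for $\theta$ near $0$ or $\pi$ (vanishing there when $i=0$) --- bounded because $i\ge M$ in the construction --- and is $\BigO(\alpha_\Eps^{-M})$ on $(\alpha_\Eps,\pi-\alpha_\Eps)$ by the $L^\infty$ bounds on $H_i$ and $H_j-\sin^j\theta$ from Lemma~\ref{lem:beta-1}(b)--(d). Hence $\mathcal S_{M,i}\in L^\infty$ by the maximum principle, and $\vint{S_{M,i}}(0)=\vint{\mathcal S_{M,i}}(0)$ is well-defined. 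This closes the induction.

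\emph{Main obstacle.} Beyond the routine algebra, the one delicate point is the interlocking of this induction with Lemma~\ref{lem:bound-S-beta-M-i}: defining $\beta_{M-1,i}$ --- and hence the coefficients $\eta^{(j)}_{M,i}$ and the auxiliary solution $g$ --- presupposes $\vint{S_{M-1,M-1}}(0)\ne 1$, which is only meaningful once the $o(1)$-size of that quantity is known, while that size estimate itself rests on the representation~\eqref{eq:S-M-i-explicit}. Organizing the two lemmas so that each is used only for indices strictly below the one currently treated is what makes the scheme consistent; the remainder is the bookkeeping of~\eqref{eq:S-M-i-explicit} and a case split near $\theta=0,\pi$ versus the interior $(\alpha_\Eps,\pi-\alpha_\Eps)$.
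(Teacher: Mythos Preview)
Your proposal is correct and follows essentially the same route as the paper: both argue by induction on $M$, identify $S_{M,i}$ as $-\partial_\eta g|_{\eta=0}$ for the mean-zero half-space solution $g=\mathcal S_{M-1,i}-\beta_{M-1,i}(1-\mathcal S_{M-1,M-1})$, derive the explicit representation~\eqref{eq:S-M-i-explicit} with the same recursion $\eta^{(M-1)}_{M,i}=\beta_{M-1,i}$, $\eta^{(j)}_{M,i}=\eta^{(j)}_{M-1,i}+\beta_{M-1,i}\eta^{(j)}_{M-1,M-1}$, and use that formula to check boundedness of the incoming data near $\theta=0,\pi$. Your remark about the interlocking with Lemma~\ref{lem:bound-S-beta-M-i} is apt---the paper also defers the verification of $\vint{S_{M,M}}\neq 1$ to that lemma, with the two inductions advancing in tandem level by level.
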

\begin{proof}
(a) The case where $M=1$ is proved in Lemma~\ref{lem:beta-1}. 
In general, we assume that $S_{M, i}$ is the restriction of a solution to the half-space equation at $\eta = 0$ and $\vint{S_{M, i}}$ is well-defined. Then by~\eqref{def:S-k},
\begin{align*}
        S_{M+1,i} = \frac{S_{M, i} - \beta_{M, i} (1 - S_{M, M}) }{\sin\theta} \Big|_{\eta=0} \,,
\qquad
    \beta_{M, i} = \frac{\vint{S_{M, i}}(0)}{1 - \vint{S_{M, M}}(0)} \,.
\end{align*}
Suppose $T_{M, i}$ is the solution to the half-space equation with $S_{M, i} = T_{M, i} \big|_{\eta=0}$ for $i=0, M, \cdots, N$. Then $T_{M, i} - \beta_{M, i} (1 - T_{M, M})$ is also a solution to the half-space equation. Moreover, by the definition of $\beta_{M, i}$, 
\begin{align*}
    \vint{S_{M, i} - \beta_{M, i} (1 - S_{M, M})} = 0 \,,
\qquad
    i = 0, M+1, \cdots, N \,.
\end{align*}
Therefore,
\begin{align*}
    S_{M+1, i}
&= \frac{S_{M, i} - \beta_{M, i} (1 - S_{M, M}) - \vint{S_{M, i} - \beta_{M, i} (1 - S_{M, M})}}{\sin\theta} \Big|_{\eta=0}
\\
& = -\frac{\del}{\del\eta} \vpran{T_{M, i} - \beta_{M, i} (1 - T_{M, M})} \Big|_{\eta=0} \,.
\end{align*}
Therefore $S_{M+1, i}$ is the restriction of the half-space solution $\Disp - \frac{\del}{\del\eta} \vpran{T_{M, i} - \beta_{M, i} (1 - T_{M, M})}$ to $\eta=0$. This finishes the induction proof. 

\medskip

\Ni (b) 
We prove~\eqref{eq:S-M-i-explicit} inductively. First, $M=1$ holds by ~\eqref{eq:S-1-i-explicit} and by setting $\eta_{1,i}^{(0)} = 0$ for $1 \leq i \leq N$. Assume that~\eqref{eq:S-M-i-explicit} holds for $M$. Then by~\eqref{def:S-k}, we have
\begin{align*}
& \quad \,
    S_{M+1,i} 
    = \frac{S_{M, i} - \beta_{M, i} (1 - S_{M, M}) }{\sin\theta} \Big|_{\eta=0}
\\
&  = \frac{H_i + \sum_{j=1}^{M-1} \eta_{M, i}^{(j)} \vpran{H_j - \sin^j \theta}
                 + \beta_{M,i} \vpran{H_M + \sum_{j=1}^{M-1} \eta_{M, M}^{(j)} \vpran{H_j - \sin^j \theta} - \sin^M \theta}}{\sin^{M+1}\theta}
\\
& = \frac{H_i + \sum_{j=1}^{M} \eta_{M+1, i}^{(j)} \vpran{H_j - \sin^j \theta}}
              {\sin^{M+1}\theta} \,,
\end{align*}
where 
\begin{align} \label{def:eta-M-i-1}
    \eta_{M+1, i}^{(j)} 
    = \begin{cases}
       \eta_{M, i}^{(j)} + \beta_{M, i} \eta_{M, M}^{(j)} \,,
        & i = 0, M+1, \cdots, N, 
\quad
    j = 1, 2, \cdots, M-1 \,, \quad M \geq 2 \,, \\[4pt]
    \beta_{M, i} \,,
     & i = 0, M+1, \cdots, N \,, \quad  M \geq 1 \,.
    \end{cases}
\end{align}
Therefore by induction ~\eqref{eq:S-M-i-explicit} holds. 

\medskip

\Ni (c) Now we use~\eqref{eq:S-M-i-explicit} to show that each $S_{M, i}$ is bounded. By its definition, we only need to check the behaviour of $S_{M,i}$ near $\theta = 0, \pi$. The case $M=1$ has already been shown in Lemma~\ref{lem:beta-1}. In general, first, if $i =0$, then
\begin{align*}
   S_{M, 0} 
 = \frac{H_0 + \sum_{j=1}^{M-1} \eta_{M, 0}^{(j)} \vpran{H_j - \sin^j \theta}}
              {\sin^{M}\theta} \,.
\end{align*}
Recall the definitions of $H_0, H_j$ in~\eqref{def:H-k}. We have
\begin{align*}
    S_{M, 0}(\theta) = 0 \,,
\qquad
    \theta \in (0, \alpha_\Eps) \cup (\pi - \alpha_\Eps, \pi) \,.
\end{align*}
By (a) and the maximum principle, 
we have that $\vint{S_{M,0}}(0)$ is well-defined for each $M \geq 1$. Similarly, 
\begin{align*}
   S_{M, i} 
 = \frac{H_i + \sum_{j=1}^{M-1} \eta_{M, i}^{(j)} \vpran{H_j - \sin^j \theta}}
              {\sin^{M}\theta}
 = \BigO(1) \,,
\qquad
   \theta \in (0, \alpha_\Eps) \cup (\pi - \alpha_\Eps, \pi) \,,
\quad
   i \geq M \,.
\end{align*}
Hence $\vint{S_{M,i}}(0)$ is well-defined for $M \geq 1$ and $i = 0, M, \cdots, N$.
\end{proof}

In the following lemma we show more explicit bounds of $\vint{S_{M, i}}$ and $\beta_{M, i}$.
\begin{lem} \label{lem:bound-S-beta-M-i}
Let $S_{M, i}$ and $\beta_{M, i}$ be defined in~\eqref{def:S-k}. Let
\begin{align*}
    \eta_{1,i}^{(0)} = 0 \,,
\qquad
    i = 0, 1, 2, \cdots, N \,.
\end{align*}
Then

\Ni (a) For $M \geq 1, i \geq M, $ and $1 \leq j \leq M-1$, we have
\begin{align} \label{bound:S-beta-M-i}
   \vint{S_{M, i}} =  \BigO(\alpha_\Eps^{i-M+1}) \,,
\qquad
  \beta_{M,i} = \BigO(\alpha_\Eps^{i-M+1}) \,,
\qquad
   \eta_{M, i}^{(j)} 
   = \BigO\vpran{\alpha_\Eps^{i - j + 1}} \,.
\end{align}
with $\eta_{1,i}^{(0)} = 0$ when $M=1$.
\medskip

\Ni (b) If $i=0$, then we have for $M = 1, 2$,
\begin{align} \label{bound:S-beta-M-0}
   S_{M, 0} =  \BigO\vpran{\frac{1}{\alpha_\Eps^{M-1}} \abs{\ln \alpha_\Eps}} \,,
\qquad
   \beta_{M, 0} = \BigO\vpran{\frac{1}{\alpha_\Eps^{M-1}} \abs{\ln \alpha_\Eps}} \,,
\qquad
   \eta_{M, 0}^{(j)} = \BigO\vpran{\alpha_\Eps^{- j +1} \abs{\ln \alpha_\Eps}} \,,
\end{align}
while for $M \geq 3$,
\begin{align} \label{bound:S-beta-M-0-1}
   S_{M, 0} =  \BigO\vpran{\frac{1}{\alpha_\Eps^{M-1}}} \,,
\qquad
   \beta_{M, 0} = \BigO\vpran{\frac{1}{\alpha_\Eps^{M-1}}} \,,
\qquad
   \eta_{M, 0}^{(j)} 
   = \begin{cases}
         \BigO\vpran{\alpha_\Eps^{-j +1} \abs{\ln \alpha_\Eps}} \,, & j= 1, 2, \\[2pt]
         \BigO\vpran{\alpha_\Eps^{-j +1}} \,, & 3 \leq j \leq M-1 \,.
       \end{cases}
\end{align}
\end{lem}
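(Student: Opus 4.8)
The plan is to prove part (a) first and then part (b), in both cases by induction on $M$, using that --- via the recursions \eqref{def:S-k} and \eqref{def:eta-M-i-1} and the explicit representation \eqref{eq:S-M-i-explicit} of Lemma~\ref{integrability:S-M-i} --- all quantities at level $M$ are determined by those at level $M-1$. I keep the conventions of this section (empty sums are zero, $\eta_{1,i}^{(0)}=0$) and take $\Eps$, hence $\alpha_\Eps$, small. At a fixed level $M$ I would proceed in three steps: first bound the $\eta_{M,i}^{(j)}$ ($1\le j\le M-1$) from \eqref{def:eta-M-i-1} together with the level-$(M-1)$ bounds on $\beta_{M-1,\cdot}$ and $\eta_{M-1,\cdot}^{(\cdot)}$; then insert these into \eqref{eq:S-M-i-explicit}, split $S_{M,i}$ by linearity into the pieces with incoming data $H_i/\sin^M\theta$, $(H_j-\sin^j\theta)/\sin^M\theta$ and (when $i=0$) $H_0/\sin^M\theta$, and bound $\abs{\vint{S_{M,i}}(0)}$ using the solution-operator estimates in Lemma~\ref{lem:beta-1}(b)--(d); finally, observe that the $i=M$ instance of the previous step gives $\vint{S_{M,M}}(0)=\BigO(\alpha_\Eps)$, so $1-\vint{S_{M,M}}(0)\ge 1/2$ for $\Eps$ small and $\beta_{M,i}=\vint{S_{M,i}}(0)/(1-\vint{S_{M,M}}(0))$ inherits the order of $\vint{S_{M,i}}(0)$.

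For part (a) the base case $M=1$ is immediate: $S_{1,i}=H_i/\sin\theta$, so Lemma~\ref{lem:beta-1}(d) with $k=i\ge1$ gives $\vint{S_{1,i}}(0)=\BigO(\alpha_\Eps^{i})$, hence $\beta_{1,i}=\BigO(\alpha_\Eps^i)$, while $\eta_{1,i}^{(0)}=0$. For the inductive step I would first note $\eta_{M+1,i}^{(j)}=\eta_{M,i}^{(j)}+\beta_{M,i}\eta_{M,M}^{(j)}$, whose correction is $\BigO(\alpha_\Eps^{(i-M+1)+(M-j+1)})=\BigO(\alpha_\Eps^{i-j+2})$, subordinate to $\eta_{M,i}^{(j)}=\BigO(\alpha_\Eps^{i-j+1})$, while $\eta_{M+1,i}^{(M)}=\beta_{M,i}=\BigO(\alpha_\Eps^{i-M+1})$; this matches \eqref{bound:S-beta-M-i} at level $M+1$. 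Then in \eqref{eq:S-M-i-explicit} the principal piece $H_i/\sin^{M+1}\theta$ contributes $\BigO(\alpha_\Eps^{i-M})$ by Lemma~\ref{lem:beta-1}(d) (with $k=i\ge M+1$), and each correction term $\eta_{M+1,i}^{(j)}$ times the contribution of $(H_j-\sin^j\theta)/\sin^{M+1}\theta$ is, by Lemma~\ref{lem:beta-1}(c), of order at most $\alpha_\Eps^{i-j+1}\cdot\alpha_\Eps^{j-M}=\alpha_\Eps^{i-M+1}$ in the generic range and $\alpha_\Eps^{i-M+1}\abs{\ln\alpha_\Eps}$ in the boundary cases $j=M-1,M$ --- all $o(\alpha_\Eps^{i-M})$ since $\alpha_\Eps\abs{\ln\alpha_\Eps}\to0$. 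Hence $\vint{S_{M+1,i}}(0)=\BigO(\alpha_\Eps^{i-M})$ and $\beta_{M+1,i}=\BigO(\alpha_\Eps^{i-M})$, closing the induction for (a).

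For part (b) ($i=0$) I would induct on $M$ with part (a) already available; note that $\eta_{M+1,0}^{(j)}=\eta_{M,0}^{(j)}+\beta_{M,0}\eta_{M,M}^{(j)}$ feeds the $i=M$ quantity $\eta_{M,M}^{(j)}$ from (a), which is why (a) is handled first. The base cases $M=1,2$ come directly from Lemma~\ref{lem:beta-1}(b), using $\eta_{2,0}^{(1)}=\beta_{1,0}=\BigO(\abs{\ln\alpha_\Eps})$. In the inductive step the dominant contribution to $S_{M+1,0}$ is always the $H_0/\sin^{M+1}\theta$ piece, whose $\vint{\cdot}(0)$ is $\BigO(\alpha_\Eps^{-(M+1)+1}\abs{\ln\alpha_\Eps})$ when $M+1\le2$ and $\BigO(\alpha_\Eps^{-(M+1)+1})$ when $M+1\ge3$, by Lemma~\ref{lem:beta-1}(b) --- this is exactly the origin of the logarithm in \eqref{bound:S-beta-M-0} and of its absence in \eqref{bound:S-beta-M-0-1}. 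Every other contribution --- the correction terms $\eta_{M+1,0}^{(j)}$ times the $(H_j-\sin^j\theta)/\sin^{M+1}\theta$ solutions, and the term $\beta_{M,0}\eta_{M,M}^{(j)}$ in the $\eta$-recursion --- carries at least one extra power of $\alpha_\Eps$ relative to the dominant term, at the cost of a bounded power $\abs{\ln\alpha_\Eps}^p$; since $N$ is finite and $\alpha_\Eps\abs{\ln\alpha_\Eps}^p\to0$ for every fixed $p$, all of these remain strictly subordinate, so \eqref{bound:S-beta-M-0}--\eqref{bound:S-beta-M-0-1} and the stated bounds on $\eta_{M,0}^{(j)}$ and $\beta_{M,0}$ propagate. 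I expect the main obstacle to be precisely this bookkeeping: matching each triple $(M,i,j)$ to the right case of Lemma~\ref{lem:beta-1}(b)--(d) and checking that the powers of $\abs{\ln\alpha_\Eps}$ accumulated along the recursion never overtake the gained powers of $\alpha_\Eps$ --- which is where finiteness of $N$ is used.
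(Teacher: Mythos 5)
Your proposal is correct and follows essentially the same route as the paper: part (a) by induction on $M$, bounding $\eta_{M+1,i}^{(j)}$ from the recursion \eqref{def:eta-M-i-1}, then feeding the explicit representation \eqref{eq:S-M-i-explicit} into Lemma~\ref{lem:beta-1}(b)--(d) term by term, and then part (b) separately with the $M=1,2$ log-bearing base cases; your explicit remark that $\vint{S_{M,M}}(0)=\BigO(\alpha_\Eps)$ keeps $1-\vint{S_{M,M}}(0)$ bounded away from zero is a detail the paper leaves implicit. No substantive differences.
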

\begin{proof}
(a) We use an induction proof to verify~\eqref{bound:S-beta-M-i}. The base case $M=1$ satisfies
\begin{align*}
    S_{1,i} = \frac{H_i}{\sin\theta} \,,
\qquad
    \beta_{1, i} 
    = \frac{\vint{\frac{H_i}{\sin\theta}}(0)}
               {1- \vint{\frac{H_1}{\sin\theta}(0)}}  \,,
\qquad
   \eta_{1,i}^{(0)} = 0 \,,
\qquad
    i = 1, \cdots, N \,.
\end{align*}
By Lemma~\ref{lem:beta-1}, we have
\begin{align*}
   \vint{S_{1, i}} = \BigO(\alpha_\Eps^i) \,,
\qquad
    \beta_{1,i} = \BigO(\alpha_\Eps^i)
\qquad
    i = 1, \cdots, N \,.
\end{align*}
Thus the base case is verified. Now suppose~\eqref{bound:S-beta-M-i} holds for $M \geq 1$ and we consider the case $M+1$. First we estimate the size of $\eta_{M+1, i}^{(j)}$. By~\eqref{def:eta-M-i-1},
\begin{align*}
    \eta_{M+1, i}^{(M)} = \beta_{M, i} 
    = \BigO\vpran{\alpha_\Eps^{i-M+1}} \,,
\end{align*}
and
\begin{align*}
 \eta_{M+1, i}^{(j)} 
    = \eta_{M, i}^{(j)} + \beta_{M, i} \eta_{M, M}^{(j)}
    = \BigO\vpran{\alpha_\Eps^{i-j+1}}
       + \BigO\vpran{\alpha_\Eps^{i - M + 1 + M- j + 1}}
    = \BigO\vpran{\alpha_\Eps^{i-j+1}} \,,
\end{align*}
for $i \geq M+1$ and $ 1 \leq j \leq M-1$. Thus the estimate for $\eta_{M+1,i}^{(j)}$ in~\eqref{bound:S-beta-M-i} holds.

Using the bounds for $\eta_{M+1,i}^{(j)}$ and~\eqref{eq:S-M-i-explicit}, we can now bound $\vint{S_{M+1, i}}$. 
Since $S_{M+1, i}$ is the restriction of the half-space solution $T_{M+1,i}$ to $\eta = 0$, we can apply Lemma~\ref{lem:beta-1} together with the linearity of the half-space equation 
to get 
\begin{align*}
   \abs{\vint{S_{M+1,i}}}
= \BigO\vpran{\alpha_\Eps^{i-M}}
    + \sum_{j=1}^{M-1}\BigO\vpran{\alpha_\Eps^{i-j+1}} 
         \BigO\vpran{\alpha_\Eps^{j - M}}
= \BigO\vpran{\alpha_\Eps^{i-M}}
= \BigO\vpran{\alpha_\Eps^{i-(M+1)+1}} \,.
\end{align*}
Moreover, since $\beta_{M+1, i} = \BigO(\abs{\vint{S_{M+1,i}}})$, we also have
\begin{align*}
   \beta_{M+1,i} = \BigO\vpran{\alpha_\Eps^{i-(M+1)+1}} \,.
\end{align*}
This shows all the bounds in \eqref{bound:S-beta-M-i} holds for $M+1$, which proves that \eqref{bound:S-beta-M-i} holds for all $M \geq 1$.

\medskip

(b) Now we check the case where $i = 0$. Since the bounds in Lemma~\ref{lem:beta-1}(b) are slightly different for $M=1,2$, we first treat these two cases. If $M=1$, then
\begin{align*}
     S_{1,0} = \frac{H_0}{\sin\theta} \,,
\qquad
    \beta_{1, 0} 
    = \frac{\vint{\frac{H_0}{\sin\theta}}(0)}
               {1- \vint{\frac{H_1}{\sin\theta}(0)}}  \,,
\qquad
   \eta_{1,0}^{(0)} = 0 \,,
\end{align*}
By Lemma~\ref{lem:beta-1}, we have
\begin{align*}
   \abs{\vint{S_{1,0}}} = \BigO\vpran{\abs{\ln \alpha_\Eps}} \,,
\qquad
   \beta_{1,0} = \BigO\vpran{\abs{\vint{S_{1,0}}}} 
   = \BigO\vpran{\abs{\ln \alpha_\Eps}} \,,
\end{align*}
which proves the case when $M=1$. Next we check the case where $M=2$. In this case, we have
\begin{align*}
   \eta_{2,0}^{(1)} = \beta_{1,0} = \BigO\vpran{\abs{\ln \alpha_\Eps}}  \,.
\end{align*}
Recall that
\begin{align*}
    S_{2, 0} = \frac{H_0 + \eta_{2,0}^{(1)} (H_1 - \sin\theta)}{\sin^2\theta} \,.
\end{align*}
Then by Lemma~\ref{lem:beta-1}, we have
\begin{align*}
   \abs{\vint{S_{2,0}}}
= \BigO\vpran{\frac{1}{\alpha_\Eps} \abs{\ln \alpha_\Eps}}
   + \BigO\vpran{\abs{\ln \alpha_\Eps}}
      \BigO\vpran{\abs{\ln \alpha_\Eps}}
= \BigO\vpran{\frac{1}{\alpha_\Eps} \abs{\ln \alpha_\Eps}} \,.
\end{align*}
This further gives
\begin{align*}
\beta_{2,0} = \BigO\vpran{\frac{1}{\alpha_\Eps} \abs{\ln \alpha_\Eps}} \,.
\end{align*}
Now we use induction to prove the case when $M \geq 3$. The base case is $M=3$, which by~\eqref{def:eta-M-i-1} satisfies
\begin{align*}
   \eta_{3,0}^{(2)} = \beta_{2,0}
  = \BigO\vpran{\frac{1}{\alpha_\Eps} \abs{\ln \alpha_\Eps}} \,,
\quad
&  \eta_{3,0}^{(1)}
  = \eta_{2, 0}^{(1)} + \beta_{2, 0} \eta_{2, 2}^{(1)}
  = \BigO\vpran{\abs{\ln \alpha_\Eps}}
     + \BigO\vpran{\frac{1}{\alpha_\Eps} \abs{\ln \alpha_\Eps}}
        \BigO\vpran{\alpha_\Eps}
   = \BigO\vpran{\abs{\ln \alpha_\Eps}} \,.
\end{align*}
Recall that
\begin{align*}
   S_{3,0}
= \frac{H_0 + \eta_{3,0}^{(1)} (H_1 - \sin\theta)
           + \eta_{3,0}^{(2)} (H_2 - \sin^2\theta)}{\sin^3\theta} \,.
\end{align*}
By Lemma~\ref{lem:beta-1}, we have
\begin{align*}
    \abs{\vint{S_{3,0}}}
= \BigO\vpran{\frac{1}{\alpha_\Eps^2}}
   + \BigO\vpran{\abs{\ln \alpha_\Eps}}
      \BigO\vpran{\frac{1}{\alpha_\Eps} \abs{\ln \alpha_\Eps}}
   + \BigO\vpran{\frac{1}{\alpha_\Eps}\abs{\ln \alpha_\Eps}}
      \BigO\vpran{\abs{\ln \alpha_\Eps}}
= \BigO\vpran{\frac{1}{\alpha_\Eps^2}} \,.
\end{align*}
This implies
\begin{align*}
   \beta_{3,0} = \BigO\vpran{\frac{1}{\alpha_\Eps^2}} \,.
\end{align*}
Therefore~\eqref{bound:S-beta-M-0-1} holds for $M=3$. Assume that~\eqref{bound:S-beta-M-0-1} holds for $M \geq 3$. Then for $M+1$ we~ have
\begin{align*}
  \eta_{M+1,0}^{(M)} = \beta_{M,0} = \BigO\vpran{\frac{1}{\alpha_\Eps^{M-1}}}
  = \BigO\vpran{\alpha_\Eps^{-M + 1}} \,.
\end{align*}
For $j = 1, \cdots, M-1$, it holds that
\begin{align*}
    \eta_{M+1, 0}^{(j)} 
= \eta_{M, 0}^{(j)} + \beta_{M, 0} \eta_{M, M}^{(j)}
= \BigO\vpran{\alpha_\Eps^{-j+1}}
      + \BigO\vpran{\frac{1}{\alpha_\Eps^{M-1}} \cdot \alpha_\Eps^{M-j+1}}
= \BigO\vpran{\alpha_\Eps^{-j+1}} \,,
\qquad j \geq 3 \,,
\\
    \eta_{M+1, 0}^{(j)} 
= \eta_{M, 0}^{(j)} + \beta_{M, 0} \eta_{M, M}^{(j)}
= \BigO\vpran{\alpha_\Eps^{-j+1} \abs{\ln \alpha_\Eps}}
      + \BigO\vpran{\frac{1}{\alpha_\Eps^{M-1}} \cdot \alpha_\Eps^{M-j+1}}
= \BigO\vpran{\alpha_\Eps^{-j+1} \abs{\ln \alpha_\Eps}} \,,
\quad j = 1, 2 \,.
\end{align*}
This verifies~\eqref{bound:S-beta-M-0-1} for $\eta_{M+1,0}^{(j)}$. Now we check the size of $S_{M+1,0}$. By~\eqref{eq:S-M-i-explicit}, we have
\begin{align*}
    S_{M+1, 0}
&= \frac{H_0 + \sum_{j=1}^{M} \eta_{M+1, 0}^{(j)} \vpran{H_j - \sin^j \theta}}{\sin^{M+1}\theta}
\\
& \hspace{-0.5cm}
   = \frac{H_0}{\sin^{M+1}\theta}
     + \sum_{j=1}^{2} \frac{\eta_{M+1, 0}^{(j)} \vpran{H_j - \sin^j \theta}}{\sin^{M+1}\theta}
     + \sum_{j=3}^{M-2} \frac{\eta_{M+1, 0}^{(j)} \vpran{H_j - \sin^j \theta}}{\sin^{M+1}\theta}
     + \sum_{j=M-1}^{M}\eta_{M+1, 0}^{(M)} \frac{\vpran{H_M - \sin^M \theta}}{\sin^{M+1}\theta}
\\
& \hspace{-0.5cm} = \BigO\vpran{\alpha_\Eps^{-M}}
      + \sum_{j=1}^2 \BigO\vpran{\alpha_\Eps^{-j+1} \abs{\ln \alpha_\Eps}} \BigO\vpran{\alpha_\Eps^{j-M}}
      + \sum_{j=3}^{M-1} \BigO\vpran{\alpha_\Eps^{-j+1}} \BigO\vpran{\alpha_\Eps^{j-M}}
 + \BigO\vpran{\alpha_\Eps^{-M+1}} \BigO\vpran{\abs{\ln \alpha_\Eps}}
\\
& \hspace{-0.5cm}
   = \BigO\vpran{\alpha_\Eps^{-M}} \,.
\end{align*}
This further implies
\begin{align*}
   \beta_{M+1, 0} = \BigO\vpran{\frac{1}{\alpha_\Eps^M}} \,.
\end{align*}
Therefore~\eqref{bound:S-beta-M-0-1} holds for $M+1$. Thus it holds for all $M \geq 1$.
%
%
\end{proof}

To prove the bound in Theorem~\ref{thm:homogeneous}, we first recall the $L^2$-bound of solutions to the half-space equation. These are classical results and one can find their proofs in \cite{WG2014}. 

\begin{lem} \label{lem:bound-L-2-half-space}
Let $f$ be the solution to the classical half-space equation with source $g$, incoming data $h_0$, and end-state~$f_\infty$. 
Then $f$ satisfies the bounds 
\begin{align*}
     \norm{e^{\kappa_0 \eta} (f - f_\infty)}_{L^2(\deta\dtheta)}^2 
\leq C \vpran{\int_0^\pi h_0^2 \sin\theta \dtheta 
       +  \norm{e^{\kappa_0 \eta}g}_{L^2(\deta\dtheta)}^2
       + \norm{e^{\kappa_0 \eta}g}_{L^\infty(\deta\dtheta)}^2} \,,
\\
     \norm{e^{\kappa_0 \eta} (f - f_\infty)}_{L^\infty(\deta\dtheta)}^2 
\leq C \vpran{\norm{h_0}_{L^\infty(0, \pi)} 
       +  \norm{e^{\kappa_0 \eta}g}_{L^2(\deta\dtheta)}^2
       + \norm{e^{\kappa_0 \eta}g}_{L^\infty(\deta\dtheta)}^2} \,,
\end{align*}
where $\kappa_0$ is the same decay constant as in Lemma~\ref{lem:modification}. 
\end{lem}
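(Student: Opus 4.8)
The plan is to reduce to the case $f_\infty = 0$ and then split the estimate into a weighted $L^2$ energy inequality and a weighted $L^\infty$ maximum-principle argument. Write $\phi = f - f_\infty$; since the collision operator $\mathrm{Id} - \vint{\cdot}$ annihilates constants, $\phi$ solves the same half-space equation with source $g$, incoming data $h_0 - f_\infty$, and end-state $0$. The exponential decay of $\phi$ (at some rate $\kappa_1 > \kappa_0$ determined solely by the collision operator) is established first, exactly as in \cite{WG2014}: from the Duhamel representation along characteristics together with the maximum principle one gets both $\norm{\phi}_{L^\infty(\deta\dtheta)} \le C\norm{h_0}_{L^\infty(0,\pi)} + C\norm{e^{\kappa_0\eta}g}_{L^\infty}$ and $\abs{\phi(\eta,\theta)} \le Ce^{-\kappa_1\eta}\bigl(\norm{h_0}_{L^\infty}+\norm{e^{\kappa_0\eta}g}_{L^\infty}\bigr)$. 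Separately, the stability estimate in the spirit of Lemma~\ref{lem:stability} plus the conservation law give $\abs{f_\infty} \le \tfrac{2}{\pi}\int_0^\pi\abs{f(0,\theta)}\sin\theta\dtheta + C\norm{e^{\kappa_0\eta}g}_{L^\infty}$, and Cauchy--Schwarz on $(0,\pi)$ with weight $\sin\theta$ turns this into $\abs{f_\infty} \le C\bigl(\int_0^\pi h_0^2\sin\theta\dtheta\bigr)^{1/2} + C\norm{e^{\kappa_0\eta}g}_{L^\infty}$, so that $\abs{f_\infty}^2$ is absorbed by the right-hand sides we aim for; hence it suffices to bound $\phi$ in terms of its own data.

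For the $L^2$ bound, multiply the $\phi$-equation by $e^{2\kappa_0\eta}\phi$, integrate over $\theta\in(-\pi,\pi)$ and $\eta\in(0,\infty)$, and integrate by parts in $\eta$ (justified by the decay just recorded). Using $\vint{(\phi - \vint{\phi})\phi} = \vint{(\phi-\vint{\phi})^2}\ge 0$, this produces the dissipation $D_{\kappa_0} := \int_0^\infty e^{2\kappa_0\eta}\int_{-\pi}^\pi(\phi - \vint{\phi})^2\dtheta\deta$, the favourable outgoing boundary term $\tfrac12\int_{\sin\theta<0}\abs{\sin\theta}\,\phi^2(0,\theta)\dtheta$, the incoming boundary contribution $\tfrac12\int_{\sin\theta>0}\sin\theta\,(h_0-f_\infty)^2\dtheta \le C\int_0^\pi h_0^2\sin\theta\dtheta + C\abs{f_\infty}^2$, a vanishing term at $\eta=\infty$ (since $\kappa_1 > \kappa_0$), and the two error terms $\kappa_0\int_0^\infty e^{2\kappa_0\eta}\int\sin\theta\,\phi^2\dtheta\deta$ and $\int_0^\infty e^{2\kappa_0\eta}\int g\phi\dtheta\deta$. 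To handle the errors one recovers the macroscopic part: writing $\vint{\phi} = 2\vint{\phi\sin^2\theta} - 2\vint{(\phi-\vint{\phi})\sin^2\theta}$ and using that $\partial_\eta\vint{\phi\sin^2\theta} = \vint{g\sin\theta} - \vint{(\phi-\vint{\phi})\sin\theta}$ with $\vint{\phi\sin^2\theta}\to 0$, one integrates from $\eta$ to $\infty$ to obtain $\norm{e^{\kappa_0\eta}\vint{\phi}}_{L^2(\deta)} \le C D_{\kappa_0}^{1/2} + C\norm{e^{\kappa_0\eta}g}_{L^2}$, hence $\norm{e^{\kappa_0\eta}\phi}_{L^2(\deta\dtheta)}^2 \le C D_{\kappa_0} + C\norm{e^{\kappa_0\eta}g}_{L^2}^2$. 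Choosing $\kappa_0$ small relative to the spectral gap and applying Young's inequality, both error terms are absorbed into $D_{\kappa_0}$, giving $D_{\kappa_0} \le C\int_0^\pi h_0^2\sin\theta\dtheta + C\norm{e^{\kappa_0\eta}g}_{L^2}^2 + C\abs{f_\infty}^2$; combined with the previous display and the bound on $\abs{f_\infty}$ this is the claimed $L^2$ estimate.

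For the $L^\infty$ bound there is nothing more to prove: multiplying the uniform maximum-principle bound $\norm{\phi}_{L^\infty}\le C\norm{h_0}_{L^\infty}+C\norm{e^{\kappa_0\eta}g}_{L^\infty}$ by $e^{\kappa_0\eta}$ on $\{\eta\le 1\}$, and the exponential-decay bound by $e^{\kappa_0\eta}$ on $\{\eta\ge 1\}$ (where it contributes $Ce^{(\kappa_0-\kappa_1)\eta}(\cdots)\le C(\cdots)$ since $\kappa_1>\kappa_0$), yields $\norm{e^{\kappa_0\eta}\phi}_{L^\infty(\deta\dtheta)}\le C\norm{h_0}_{L^\infty(0,\pi)}+C\norm{e^{\kappa_0\eta}g}_{L^\infty}$; squaring and reabsorbing $\abs{f_\infty}$ gives the stated inequality.

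The main obstacle is the second paragraph: the coercivity only yields control of the microscopic part $\phi - \vint{\phi}$, so the auxiliary flux identity for $\vint{\phi\sin^2\theta}$ — which plays the role of a Poincaré-type inequality adapted to the half-space geometry — is what lets one recover $\vint{\phi}$ and thereby close the weighted energy estimate. Care must be taken to keep every constant independent of $\Eps$, which is automatic here since they depend only on the collision operator and on the fixed choice of $\kappa_0$ below the spectral gap.
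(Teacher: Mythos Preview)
The paper does not prove this lemma at all: immediately before the statement it says ``These are classical results and one can find their proofs in \cite{WG2014},'' and no argument is given. So there is nothing to compare against; your task was really to supply what the paper omits.

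Your sketch is the standard route (and essentially the one in \cite{WG2014}): reduce to zero end-state, run a weighted $L^2$ energy estimate where the collision operator controls the microscopic part $\phi-\vint{\phi}$, recover the macroscopic part $\vint{\phi}$ via the flux identity for $\vint{\phi\sin^2\theta}$, and close by taking $\kappa_0$ below the spectral gap; then get the weighted $L^\infty$ bound from the maximum principle along characteristics. One organizational remark: in \cite{WG2014} the pointwise exponential decay in $L^\infty$ is obtained \emph{after} the $L^2$ decay by a bootstrap through the Duhamel formula (the integral representation feeds the decaying $\vint{\phi}$ back into $\phi$), not before it as you assert in your first paragraph. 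Invoking $L^\infty$ decay at a rate $\kappa_1>\kappa_0$ at the outset is fine as a citation, but if you intend this as a self-contained proof you should reorder so that the $L^2$ estimate comes first and the $L^\infty$ decay is derived from it; otherwise the argument is slightly circular. With that reordering the proof goes through.
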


Now we can finish the proof of Theorem~\ref{thm:homogeneous}.

\begin{proof}[Proof of Theorem~\ref{thm:homogeneous}] We divide the proof in two steps. 

\medskip

\Ni \underline{Step 1. Bounds of $\Disp\frac{\del^M \tilde f}{\del \eta^M}$.} 
Since each $\Disp\frac{\del^M \tilde f}{\del \eta^M}$ is a solution to the classical half-space equation, to show its bound in either $L^\infty$ or $L^2$, we only need to study its incoming data. By Theorem~\ref{thm:c-k}, we have a unique family of $c_k$'s which gives that 
\begin{align} \label{eq:tilde-f-small-theta-recall}
   \tilde f(0, \theta) = \bigl\langle\tilde f\bigr\rangle(0) + \sum_{k=1}^N c_k \sin^k\theta 
\qquad
  \text{for $\theta \in (0, \alpha_\Eps) \cup (\pi - \alpha_\Eps, \pi)$,}
\end{align}
and the incoming data $\tilde h_0$ only differs from $h_0$ by order $\BigO(1)$ on $\theta \in (0, \alpha_\Eps) \cup (\pi-\alpha_\Eps, \pi)$. 
By our construction, we have
\begin{align} \label{eq:del-M-tilde-f-near-0}
    \frac{\del^M \tilde f}{\del\eta^M} \Big|_{\eta=0}
& = (-1)^{M} \frac{\tilde f - \bigl\langle\tilde f\bigr\rangle - \sum_{k=1}^{M-1} c_k \sin^k \theta}{\sin^{M} \theta} \nn
\\
& = (-1)^{M} \frac{\sum_{k=M}^N c_k \sin^k\theta}{\sin^{M}\theta}
  = (-1)^{M} \sum_{k=0}^{N-M} c_{k+M} \sin^k\theta \,,
\qquad
   \theta \in (0, \alpha_\Eps) \cup (\pi - \alpha_\Eps, \pi) 
\end{align}
for $M = 1, 2, \cdots, N$ and
\begin{align}  \label{eq:del-N-tilde-f-near-0}
   \frac{\del^{N+1} \tilde f}{\del\eta^{N+1}} \Big|_{\eta=0} = 0 \,,
\qquad
   \theta \in (0, \alpha_\Eps) \cup (\pi - \alpha_\Eps, \pi) \,. 
\end{align}
Moreover, 
\begin{align} \label{eq:del-M-tilde-f-near-0-1}
    \frac{\del^M \tilde f}{\del\eta^M} \Big|_{\eta=0}
& = (-1)^{M} \frac{\tilde f - \bigl\langle\tilde f\bigr\rangle - \sum_{k=1}^{M-1} c_k \sin^k \theta}{\sin^{M} \theta} \nn
\\
& = (-1)^{M} \frac{\tilde f - \bigl\langle\tilde f\bigr\rangle}{\sin^{M}\theta}
      - (-1)^{M} \sum_{k=1}^{M-1} c_k \sin^{k-M}\theta \,,
\qquad
   \theta \in (\pi - \alpha_\Eps, \pi)
\end{align}
for $M = 1, 2, \cdots , N+1$.  Equations~\eqref{eq:del-M-tilde-f-near-0}-\eqref{eq:del-M-tilde-f-near-0-1} together with the bounds of $c_k$ in~\eqref{bound:c-k}, show that at $\eta=0$,
\begin{align} \label{bound:eta-M-L-infty-1}
   \norm{\frac{\del^M \tilde f}{\del\eta^M}(0, \cdot)}_{L^\infty(0, \pi)}
= \BigO\vpran{\frac{1}{\alpha_\Eps^{M}}} \,,
\qquad
   M = 1, 2, \cdots, N+1 \,.
\end{align}
In addition, 
\begin{align*}
   \int_0^\pi \abs{\frac{\del^M \tilde f}{\del\eta^M}(0, \theta)}^2 \sin\theta \dtheta
=  \int_0^{\alpha_\Eps} \abs{\frac{\del^M \tilde f}{\del\eta^M}}^2 \sin\theta \dtheta
   +  \int_{\pi-\alpha_\Eps}^\pi \abs{\frac{\del^M \tilde f}{\del\eta^M}}^2 \sin\theta \dtheta
   +    \int_{\alpha_\Eps}^{\pi-\alpha_\Eps} \abs{\frac{\del^M \tilde f}{\del\eta^M}}^2 \sin\theta \dtheta  \,.
\end{align*}
We estimate the three terms on the right-hand side respectively. Estimates for the first two terms are similar and we only show the details for the first one. By~\eqref{eq:del-M-tilde-f-near-0} and~\eqref{eq:del-M-tilde-f-near-0-1}, 
\begin{align*}
   \int_0^{\alpha_\Eps} \abs{\frac{\del^M \tilde f}{\del\eta^M}}^2 \sin\theta \dtheta
&   \leq 2\sum_{k=0}^{N-M} \int_0^{\alpha_\Eps} \abs{c_{k+M} \sin^k \theta}^2 \sin\theta \dtheta
\\
&   = \BigO\vpran{\sum_{k=0}^{N-M}\alpha_\Eps^{-2(k+M-1)+2k+2} \abs{\ln \alpha_\Eps}}
   = \BigO\vpran{\alpha_\Eps^{-2M + 4} \abs{\ln \alpha_\Eps}}
\end{align*}
and
\begin{align*}
   \int_{\alpha_\Eps}^{\pi-\alpha_\Eps} \abs{\frac{\del^M \tilde f}{\del\eta^M}}^2 \sin\theta \dtheta
&\leq 
  C \int_{\alpha_\Eps}^{\pi-\alpha_\Eps}
       \frac{1}{\sin^{2M-1} \theta} \dtheta
  + C \sum_{k=1}^{M-1} c_k^2
        \int_{\alpha_\Eps}^{\pi-\alpha_\Eps}
             \sin^{2k-2M+1} \theta \dtheta
\\
 & \hspace{-1cm}
 = \begin{cases}
    \BigO\vpran{\abs{\ln \alpha_\Eps}} 
       + \sum_{k=1}^{M-1}
           \BigO\vpran{\alpha_\Eps^{-2(k-1)+2k-2M+2} \abs{\ln \alpha_\Eps}}
    = \BigO\vpran{\abs{\ln \alpha_\Eps}} \,, & M = 1 \,.
     \\[2pt]
    \BigO\vpran{\alpha_\Eps^{-2M+2}} 
       + \sum_{k=1}^{M-1}
           \BigO\vpran{\alpha_\Eps^{-2(k-1)+2k-2M+2} \abs{\ln \alpha_\Eps}}
    = \BigO\vpran{\alpha_\Eps^{-2M+2}} \,, & M \geq 2 \,.
 \end{cases}
\end{align*}
Therefore, 
\begin{align} \label{bound:bdry-eta-M}
   \int_0^\pi \abs{\frac{\del^M \tilde f}{\del\eta^M}(0, \theta)}^2 \sin\theta \dtheta
   = \begin{cases}
        \BigO\vpran{\abs{\ln \alpha_\Eps}} \,, & M = 1 \,,\\[2pt]
        \BigO\vpran{\alpha_\Eps^{-M+1}} \,, & M \geq 2 \,.
      \end{cases}
\end{align}
By Lemma~\ref{lem:bound-L-2-half-space}, we obtain that
\begin{align} \label{bound:eta-M-L-2-expo}
     \norm{e^{\kappa_0 \eta} \frac{\del^M \tilde f}{\del\eta^M}}_{L^2(\deta\dtheta)}^2 
\leq C_{\kappa_0} \int_0^\pi \abs{\frac{\del^M \tilde f}{\del\eta^M}(0, \theta)}^2 \sin\theta \dtheta
= \begin{cases}
        \BigO\vpran{\abs{\ln \alpha_\Eps}} \,, & M = 1 \,,\\[2pt]
        \BigO\vpran{\alpha_\Eps^{-M+1}} \,, & M \geq 2 \,.
      \end{cases}
\end{align}
By ~\eqref{bound:eta-M-L-infty-1} and Lemma~\ref{lem:bound-L-2-half-space} again,
we have
\begin{align} \label{bound:eta-M-L-infty-2}
   \norm{e^{\kappa_0 \eta}\frac{\del^M \tilde f}{\del\eta^M}}_{L^\infty(\deta\dtheta)}
= \BigO\vpran{\alpha_\Eps^{-M}} \,,
\qquad
  M \geq 1 \,.
\end{align}
\Ni \underline{Step 2. Bounds of $\frac{\del^M \tilde f}{\del \theta^M}$ and mixed derivatives.}  Next, we check the regularity of $\tilde f$ with respect to $\theta$ and all the mixed derivatives. These will be based on the regularity in $\eta$ in Step 1. 
The main strategy is still the induction proof. First we check the case $M=1$. In this case, $\frac{\del \tilde f}{\del \theta}$ satisfies the equation
\begin{align*}
    \sin\theta \frac{\del}{\del \eta} \vpran{\frac{\del \tilde f}{\del \theta}}
    &+ \frac{\del \tilde f}{\del \theta} 
    = -\cos\theta \frac{\del \tilde f}{\del \eta} \,,  
\\
   \frac{\del \tilde f}{\del \theta} \big|_{\eta=0} &= \tilde h_0'(\theta) \,,
\qquad
   \theta \in (0, \pi) \,,   
\\
   \frac{\del \tilde f}{\del \theta} &\to 0 \,,
\qquad
   \text{as $\eta \to \infty$.}  
\end{align*}
The estimates related to the incoming data $\tilde h_0'(\theta)$ are as follows. First,
\begin{align} \label{bound:del-theta-0}
   \norm{\tilde h'_0}_{L^\infty(0, \pi)} 
   = \BigO\vpran{\alpha_\Eps^{-1}} \,.
\end{align}
Second, 
\begin{align} \label{bound:del-theta-1}
   \int_0^{\alpha_\Eps} 
      \abs{\tilde h'_0(\theta)}^2 \sin\theta \dtheta
\leq
  2 \sum_{k=1}^N \int_0^{\alpha_\Eps}
       c_k^2 k^2 \sin^{2k-1}\theta \cos^2 \theta \dtheta
= \BigO\vpran{\alpha_\Eps^{-2(k-1)+2k} \abs{\ln \alpha_\Eps}}
= \BigO\vpran{\alpha_\Eps^2 \abs{\ln \alpha_\Eps}}.
\end{align}
Similar estimate holds for $\theta \in (\pi - \alpha_\Eps, \pi)$. For the part where $\theta \in (\alpha_\Eps, \pi - \alpha_\Eps)$, we have
\begin{align} \label{bound:del-theta-2}
   \int_{\alpha_\Eps}^{\pi-\alpha_\Eps}
      \abs{\tilde h'_0(\theta)}^2 \sin\theta \dtheta  = \BigO(1) \,.
\end{align}
Hence,
\begin{align} \label{bound:del-theta-3}
   \int_0^\pi
      \abs{\tilde h'_0(\theta)}^2 \sin\theta \dtheta  = \BigO(1) \,.
\end{align}
Applying \eqref{bound:eta-M-L-infty-1} for $M=1$, we have
\begin{align*}
     \norm{e^{\kappa_0 \eta} \frac{\del \tilde f}{\del\theta}}_{L^2(\deta\dtheta)}
    = \BigO(\abs{\ln \alpha_\Eps}^{1/2}) \,.
\end{align*}
Therefore, 
\begin{align*}
     \norm{e^{\kappa_0 \eta} \frac{\del^{M+k} \tilde f}{\del\eta^M \del\theta^k}}_{L^2(\deta\dtheta)} 
= \BigO(\abs{\ln \alpha_\Eps}^{1/2}) \,,
\qquad
   \norm{e^{\kappa_0 \eta}\frac{\del^{M+k} \tilde f}{\del\eta^M \del\theta^k}}_{L^\infty(\deta\dtheta)} = \frac{1}{\alpha_\Eps} \,,
\qquad
   M+k = 1 \,.
\end{align*}
Hence the base case where $N = M+k=1$ is verified. 
%

For general $M \geq 1$, we have shown the bounds of $\frac{\del^M \tilde f}{\del \eta^M}$ in both $L^2$ and $L^\infty$ norms in Step 1. 
Suppose~\eqref{bound:deriv-tilde-f-1} and~\eqref{bound:deriv-tilde-f-2} hold for $N-1$ with $N \geq 2$.
Now we show that $\tilde f \in W^{N+1, \infty}(\deta\dtheta)$ and it satisfies the bounds in~\eqref{bound:deriv-tilde-f-1} and~\eqref{bound:deriv-tilde-f-2}. We use a further induction on the order of the derivative of $\theta$ for this fixed $N$. 
The base case $k=0$ holds due to~\eqref{bound:eta-M-L-2-expo} and~\eqref{bound:eta-M-L-infty-2}. Assume that the bounds~\eqref{bound:deriv-tilde-f-1} and~\eqref{bound:deriv-tilde-f-2} hold for $(M_1, k_1)$ satisfying that 
\begin{align*}
    M_1 + k_1 = M + k = N \,,
\qquad
    M_1 \leq M \,,
\qquad
    M \geq 1 \,.
\end{align*}
We then check the case $(M-1, k+1)$. The equation for $\Disp\frac{\del^N \tilde f}{\del \eta^{M-1}\del \theta^{k+1}}$ is
\begin{align*}
    \sin\theta \frac{\del}{\del \eta} \vpran{\frac{\del^N \tilde f}{\del \eta^{M-1}\del \theta^{k+1}}}
    + &\frac{\del^N \tilde f}{\del \eta^{M-1}\del \theta^{k+1}} 
   = G_{M-1,k+1} \,,  
\\
   \frac{\del^N \tilde f}{\del \eta^{M-1}\del \theta^{k+1}} \big|_{\eta=0} 
   &= \frac{\del^{k+1}}{\del\theta^{k+1}}
         \vpran{\frac{\del^{M-1} \tilde f}{\del\eta^{M-1}}}\,,
\qquad
   \theta \in (0, \pi) \,,   
\\
   \frac{\del^N \tilde f}{\del \eta^{M-1}\del \theta^{k+1}} &\to 0 \,,
\qquad
   \text{as $\eta \to \infty$.}  
\end{align*}
where
\begin{align*}
   G_{M-1, k+1}
= \frac{\del^{k+1}}{\del\theta^{k+1}}
    \vpran{\sin\theta \frac{\del^{M} \tilde f}{\del \eta^{M}}}
   - \sin\theta \frac{\del}{\del \eta} \vpran{\frac{\del^N \tilde f}{\del \eta^{M-1}\del \theta^{k+1}}} \,,
\end{align*}
where by the induction assumptions $G_{M-1, k+1}$ is bounded by
\begin{align} \label{bound:G-M-k}
     \norm{e^{\kappa_0 \eta} G_{M-1,k+1}}_{L^2(\deta\dtheta)} 
= \BigO(\alpha_\Eps^{-N + 1} \abs{\ln \alpha_\Eps}^{1/2}) \,.
\qquad
   \norm{e^{\kappa_0 \eta} G_{M-1,k+1}}_{L^\infty(\deta\dtheta)} = \BigO\vpran{\alpha_\Eps^{-N}} \,.
\end{align}
Meanwhile, the incoming data satisfies
\begin{align*}
    \frac{\del^{k+1}}{\del\theta^{k+1}}
         \vpran{\frac{\del^{M-1} \tilde f}{\del\eta^{M-1}}}
& = (-1)^{M-1}\frac{\del^{k+1}}{\del\theta^{k+1}}
       \vpran{\frac{\tilde f - \bigl\langle\tilde f\bigr\rangle - \sum_{i=1}^{M-2} c_i \sin^i \theta}{\sin^{M-1} \theta}}
\\
& = (-1)^{M-1}\frac{\del^{k+1}}{\del\theta^{k+1}}
       \vpran{\sum_{i=0}^{k+1} c_{i+M-1} \sin^i \theta} \,,
\qquad
   \theta \in (0, \alpha_\Eps) \cup (\pi-\alpha_\Eps, \pi) \,.
\end{align*}
Therefore,
\begin{align*}
  \int_0^{\alpha_\Eps}
     \abs{\frac{\del^{k+1}}{\del\theta^{k+1}}
         \vpran{\frac{\del^{M-1} \tilde f}{\del\eta^{M-1}}}}^2 \sin\theta \dtheta
= \BigO\vpran{c_N^2 \alpha_\Eps^2 } 
= \BigO\vpran{\alpha_\Eps^{-2N+4} \abs{\ln \alpha_\Eps}^{2}}\,.
\end{align*}
Similar estimate holds for the integration on $(\pi-\alpha_\Eps, \pi)$. For the part where $\theta \in (\alpha_\Eps, \pi - \alpha_\Eps)$, we have
\begin{align*}
    \frac{\del^{k+1}}{\del\theta^{k+1}}
         \vpran{\frac{\del^{M-1} \tilde f}{\del\eta^{M-1}}}
& = (-1)^{M-1}\frac{\del^{k+1}}{\del\theta^{k+1}}
       \vpran{\frac{\tilde f - \bigl\langle\tilde f\bigr\rangle - \sum_{i=1}^{M-2} c_i \sin^i \theta}{\sin^{M-1} \theta}}
\\
& \hspace{-3cm}
    = (-1)^{M-1}\frac{\del^{k+1}}{\del\theta^{k+1}}
       \vpran{\frac{\tilde f - \bigl\langle\tilde f\bigr\rangle}{\sin^{M-1} \theta}}
     - (-1)^{M-1}\frac{\del^{k+1}}{\del\theta^{k+1}}
       \vpran{\sum_{i=1}^{M-2} c_{i} \sin^{i-M+1} \theta} \,,
\end{align*}
Therefore,
\begin{align*}
& \quad \,
  \int_{\alpha_\Eps}^{\pi-\alpha_\Eps}
     \abs{\frac{\del^{k+1}}{\del\theta^{k+1}}
         \vpran{\frac{\del^{M-1} \tilde f}{\del\eta^{M-1}}}}^2 \sin\theta \dtheta
\\
& \leq C \int_{\alpha_\Eps}^{\pi - \alpha_\Eps}
                \frac{1}{\sin^{2N-1} \theta} \dtheta
          + C c_1 \int_{\alpha_\Eps}^{\pi - \alpha_\Eps}
                \frac{1}{\sin^{2N-3} \theta} \dtheta
= \BigO\vpran{\alpha_\Eps^{-2N+2} \abs{\ln \alpha_\Eps}}\,.
\end{align*}
This gives
\begin{align*}
  \int_0^\pi
     \abs{\frac{\del^{k+1}}{\del\theta^{k+1}}
         \vpran{\frac{\del^{M-1} \tilde f}{\del\eta^{M-1}}}}^2 \sin\theta \dtheta
= \BigO\vpran{\alpha_\Eps^{-2N+2} \abs{\ln \alpha_\Eps}}\,,
\end{align*}
Combining with~\eqref{bound:G-M-k}, we have
\begin{align*}
    \norm{e^{\kappa_0 \eta}\frac{\del^{N} \tilde f}{\del\eta^{M-1} \del \theta^{k+1}}}_{L^2(\deta\dtheta)}
 = \BigO\vpran{\frac{1}{\alpha_\Eps^{N -1}} \abs{\ln \alpha_\Eps}^{1/2}} \,,
\qquad
    \norm{e^{\kappa_0 \eta} \frac{\del^{N} \tilde f}{\del\eta^{M-1} \del \theta^{k+1}}}_{L^\infty(0, \pi)}
 = \BigO\vpran{\frac{1}{\alpha_\Eps^{N}}} \,,
\end{align*}
where $M+k = N$. This proves the induction for $k+1$ and for any arbitrary $N \in \NN$. We thereby finish the proof of Theorem~\ref{thm:homogeneous}.
\end{proof}

\bibliography{kinetic}

\begin{figure} 
\includegraphics[width = 0.4\textwidth,height = 0.2\textheight]{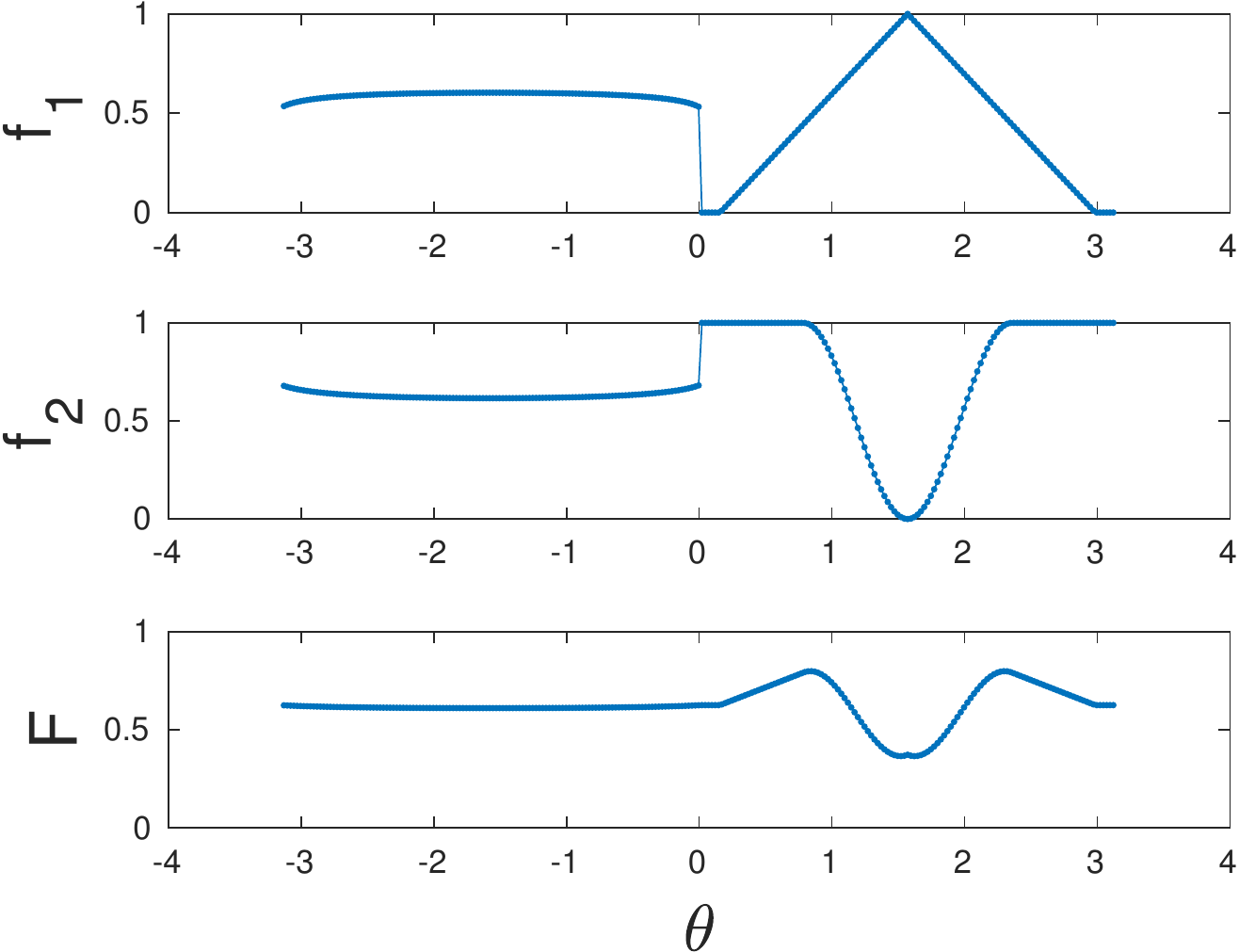}
\quad
\includegraphics[width = 0.4\textwidth,height = 0.21\textheight]{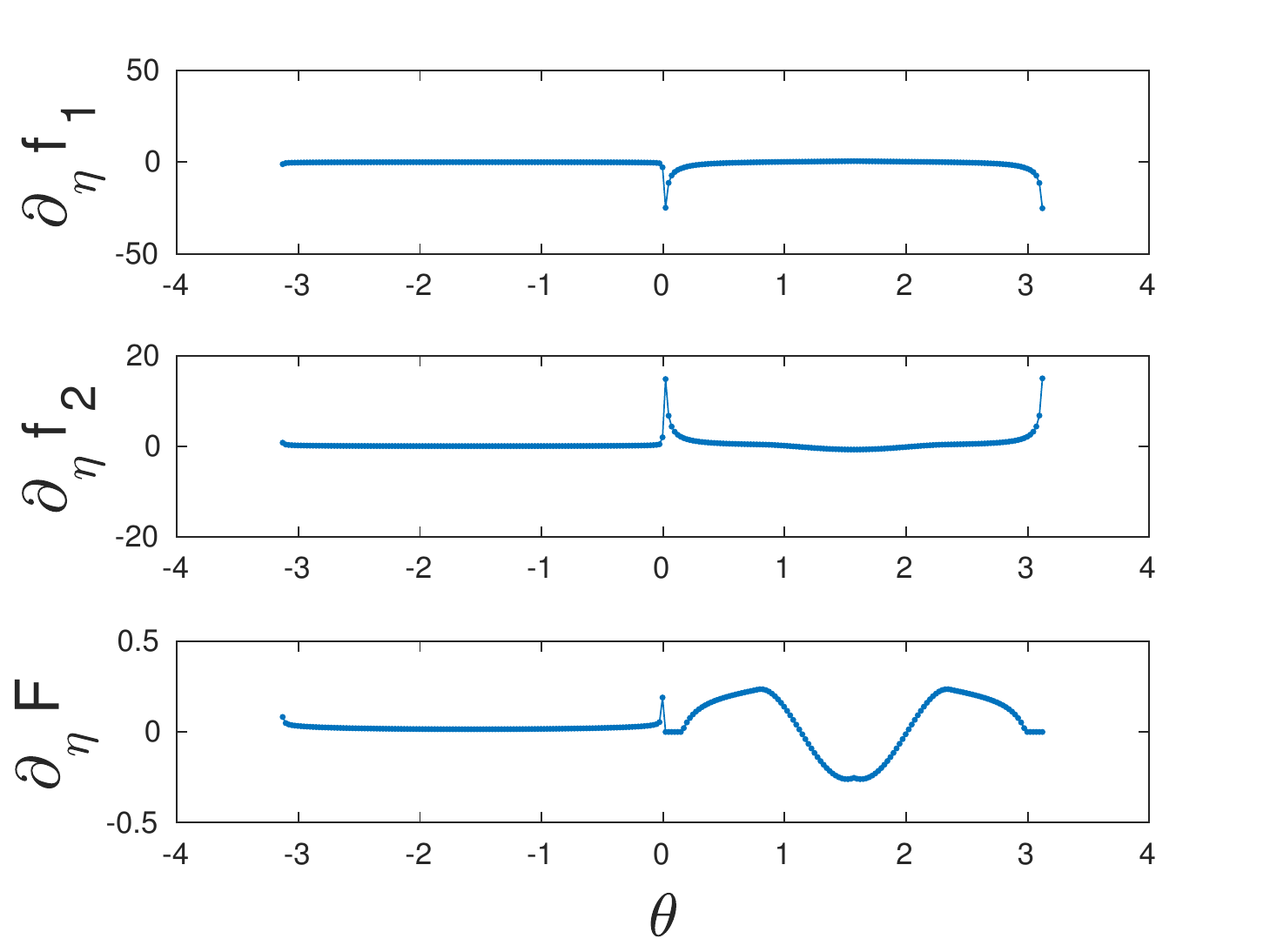} 
\caption{The plot on the left panel shows $f_1$, $f_2$, and their convex combination $F$, and on the right we show that $\del_\eta f_1$ and $\del_\eta f_2$ blows up while $\del_\eta F$ is still bounded.}
\label{Figure:1}
\end{figure}

\begin{figure} 
\includegraphics[width = 0.4\textwidth]{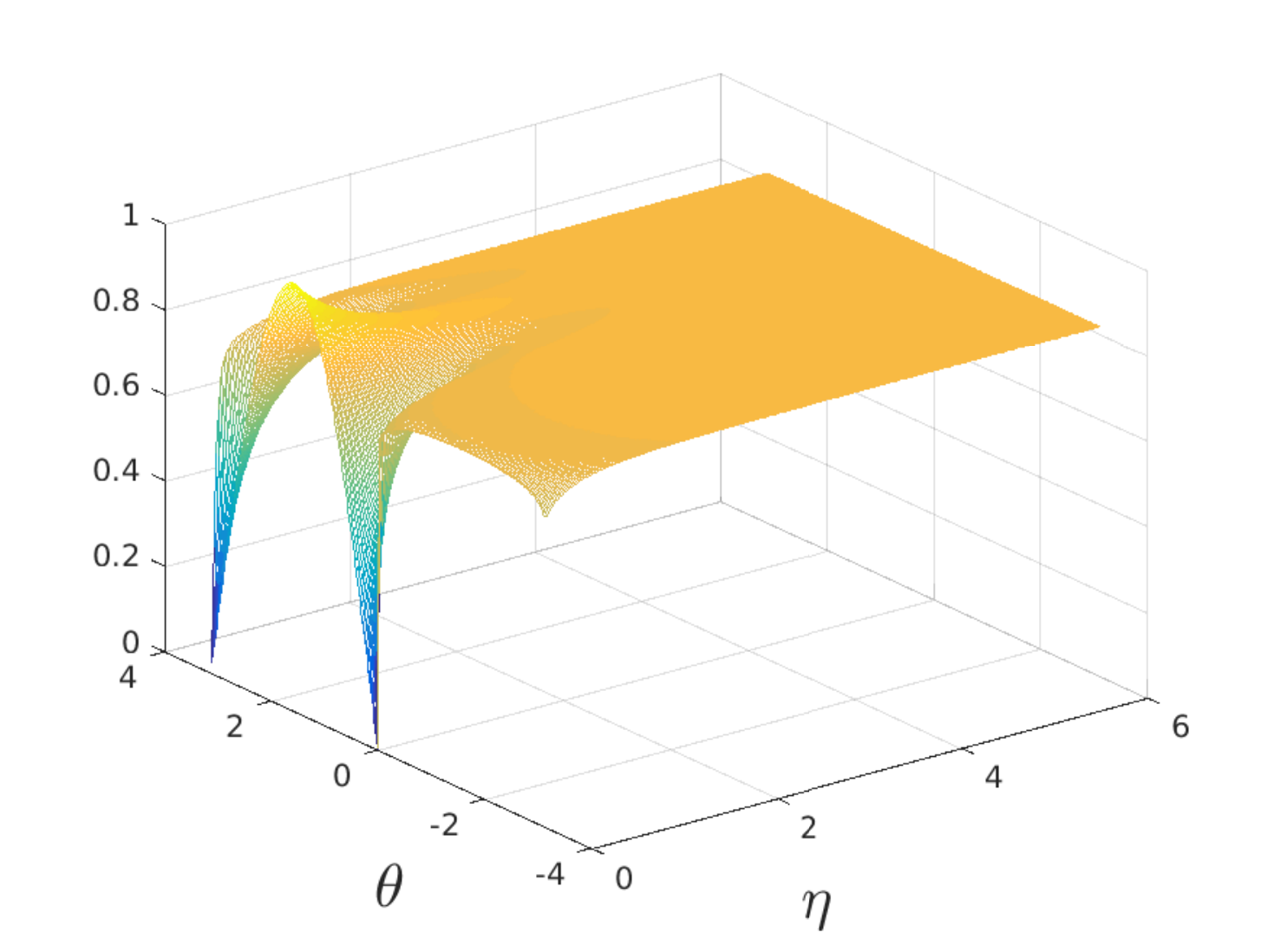} 
\qquad
\includegraphics[width = 0.4\textwidth]{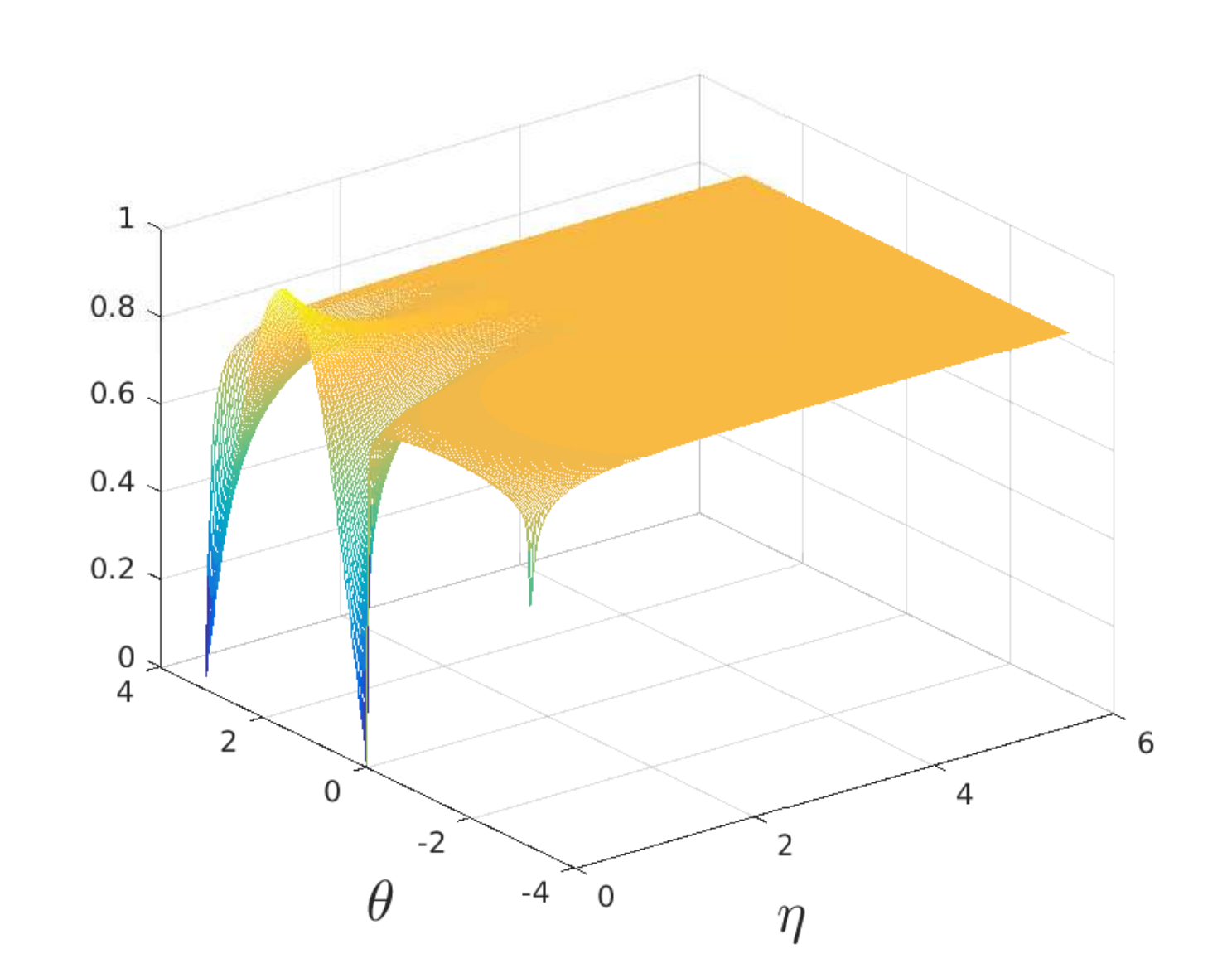}\\
\includegraphics[width = 0.4\textwidth]{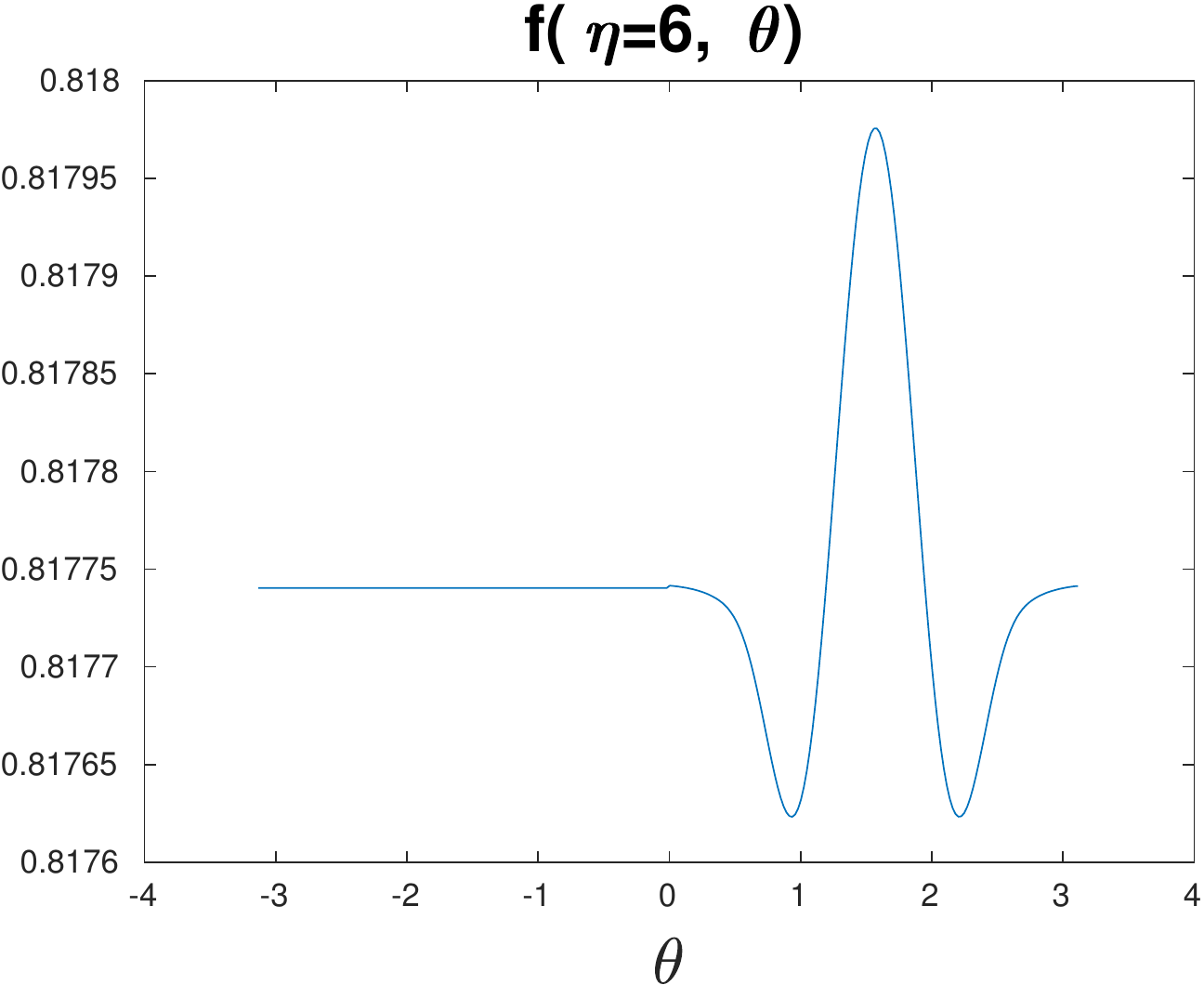}
\qquad
\includegraphics[width = 0.4\textwidth]{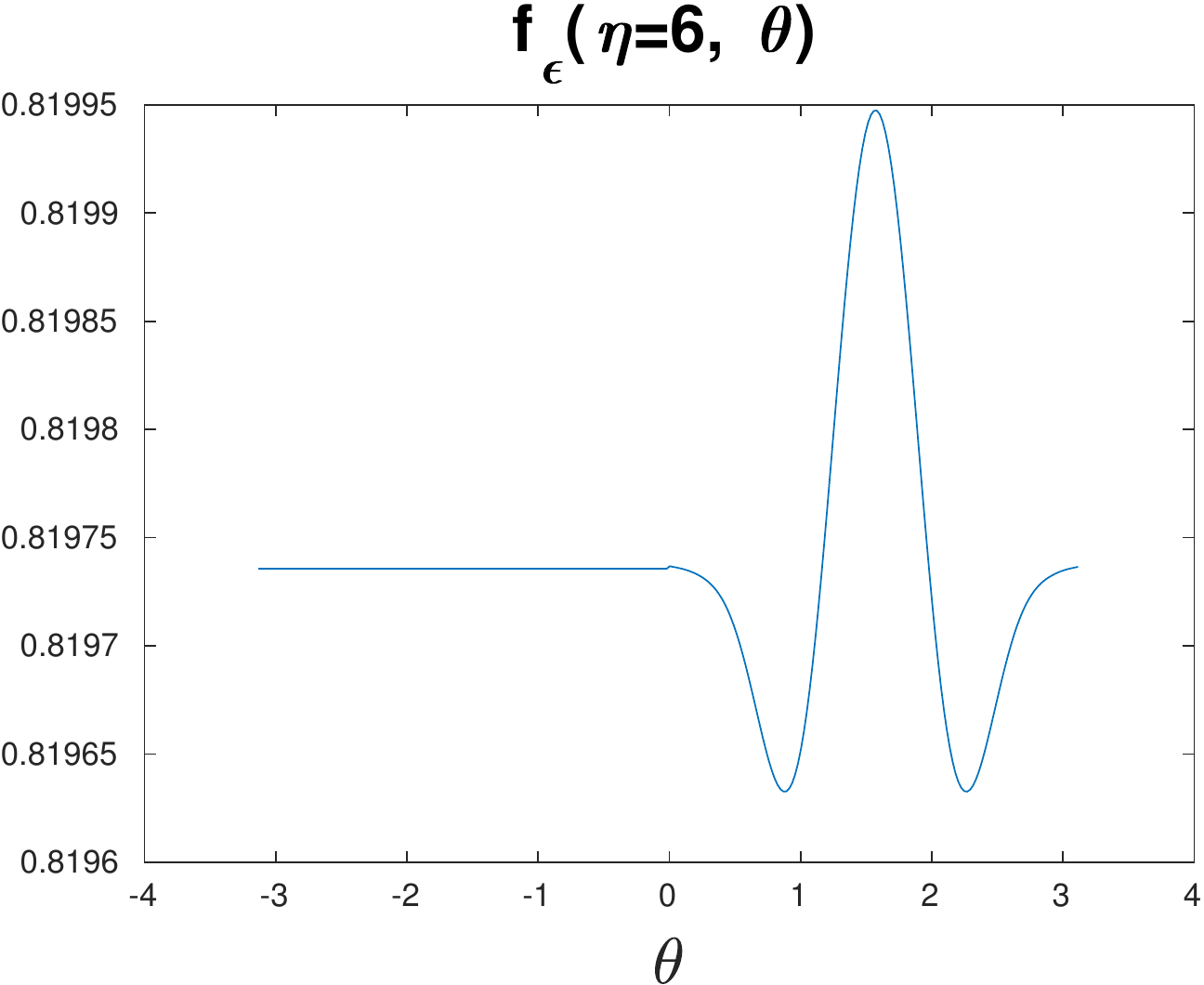} 
\caption{The two plots at the top show the solution to the classical half-space equation (left) and that to the $\Eps$-Milne equation (right). The two plots at the bottom demonstrate that at $\eta=6$, the two solutions (CHS on the left and $\Eps$-Milne equation on the right) are approximately constants, and thus truncating the domain at $\eta=6$ suffices. Here $\Eps=1/45$.}
\label{Figure:5}\
\end{figure}

\begin{figure} 
\includegraphics[width = 0.3\textwidth]{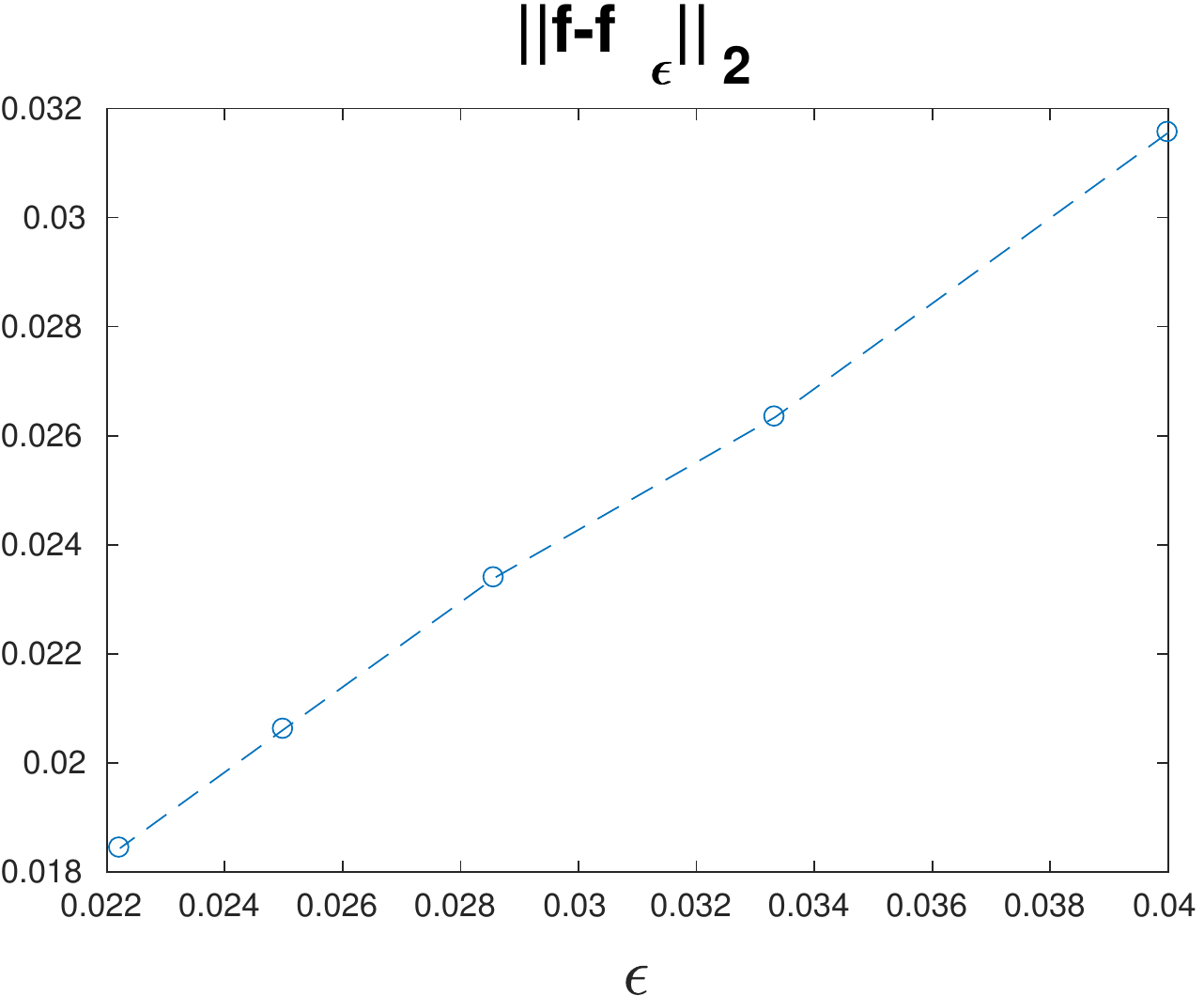}
\quad
\includegraphics[width = 0.3\textwidth]{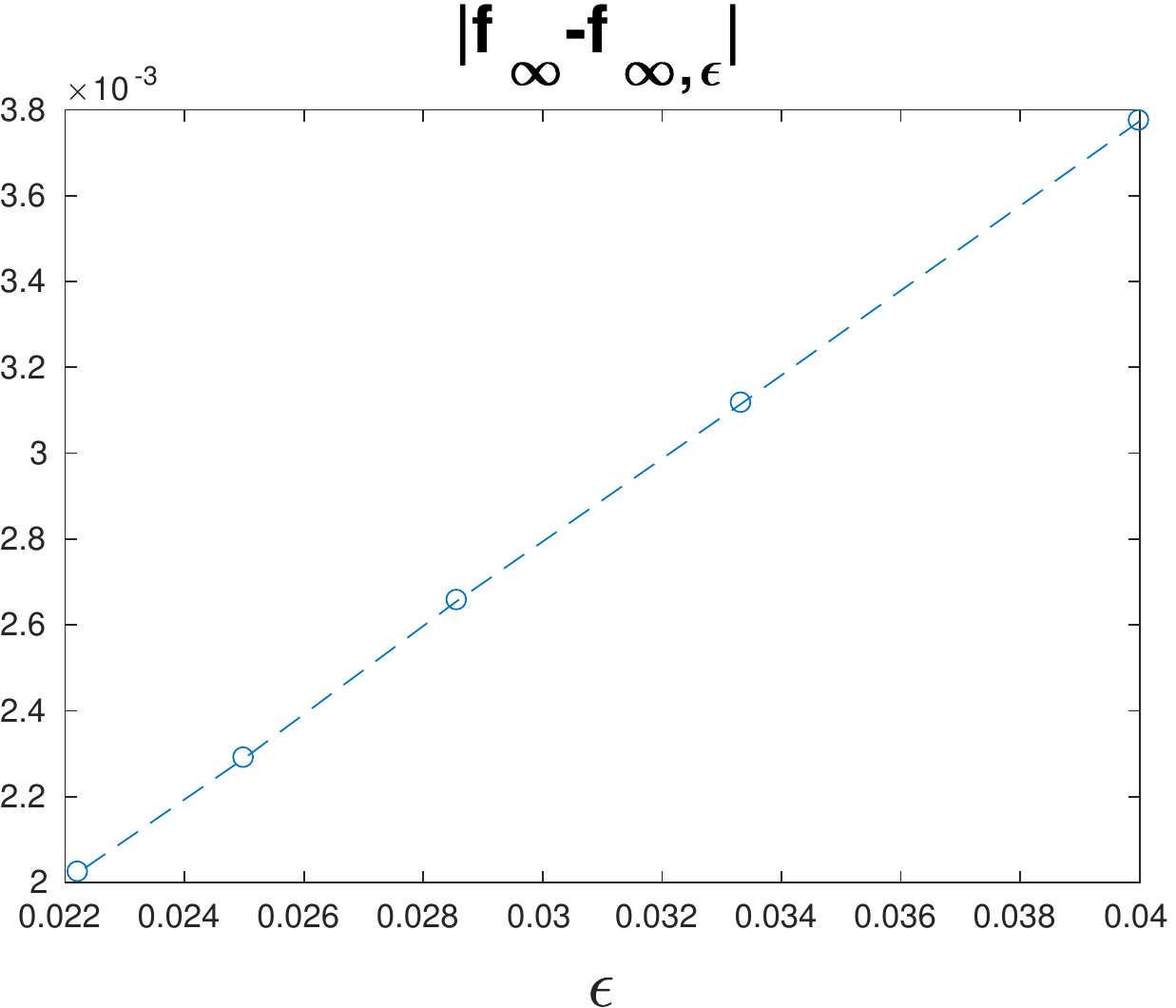} 
\quad
\includegraphics[width = 0.3\textwidth]{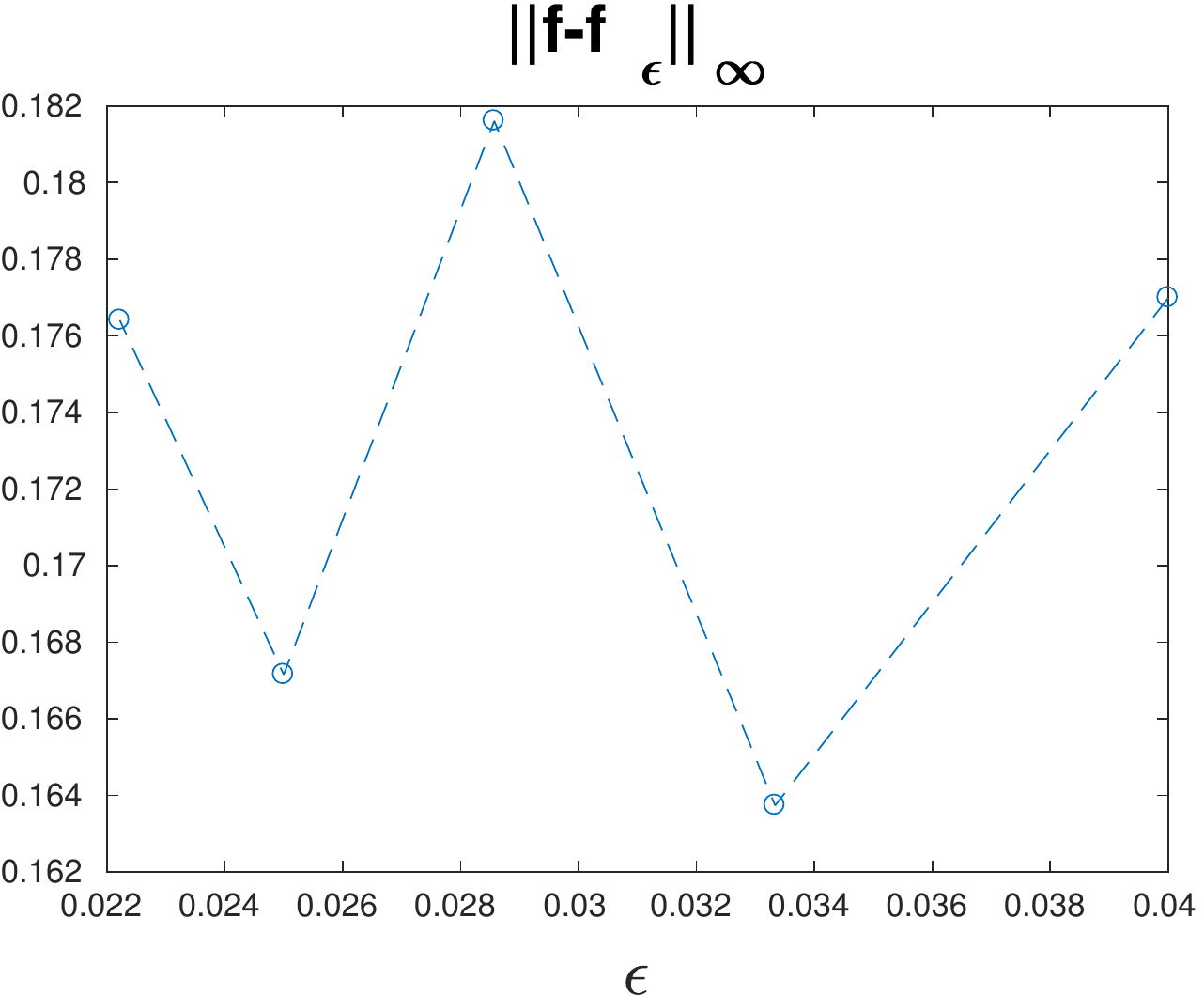} 
\caption{The three plots show the error $f_\Eps-f_0$ evaluated using different norms. The left panel shows the error in $L^2(\rd\eta\rd\theta)$ decreases to zero as $\varepsilon$ converges to zero, and the middle panel shows the convergence in $L^\infty(\rd\theta)$ at $\eta = 6$, which confirms the convergence of the end state. The plot on the right demonstrates the discrepancy of the error in $L^\infty(\rd\eta\rd\theta)$, and this is the plot that confirms the results in~\cite{WG2014}.}
\label{Figure:2}
\end{figure}

\begin{figure} 
\includegraphics[width = 0.5\textwidth]{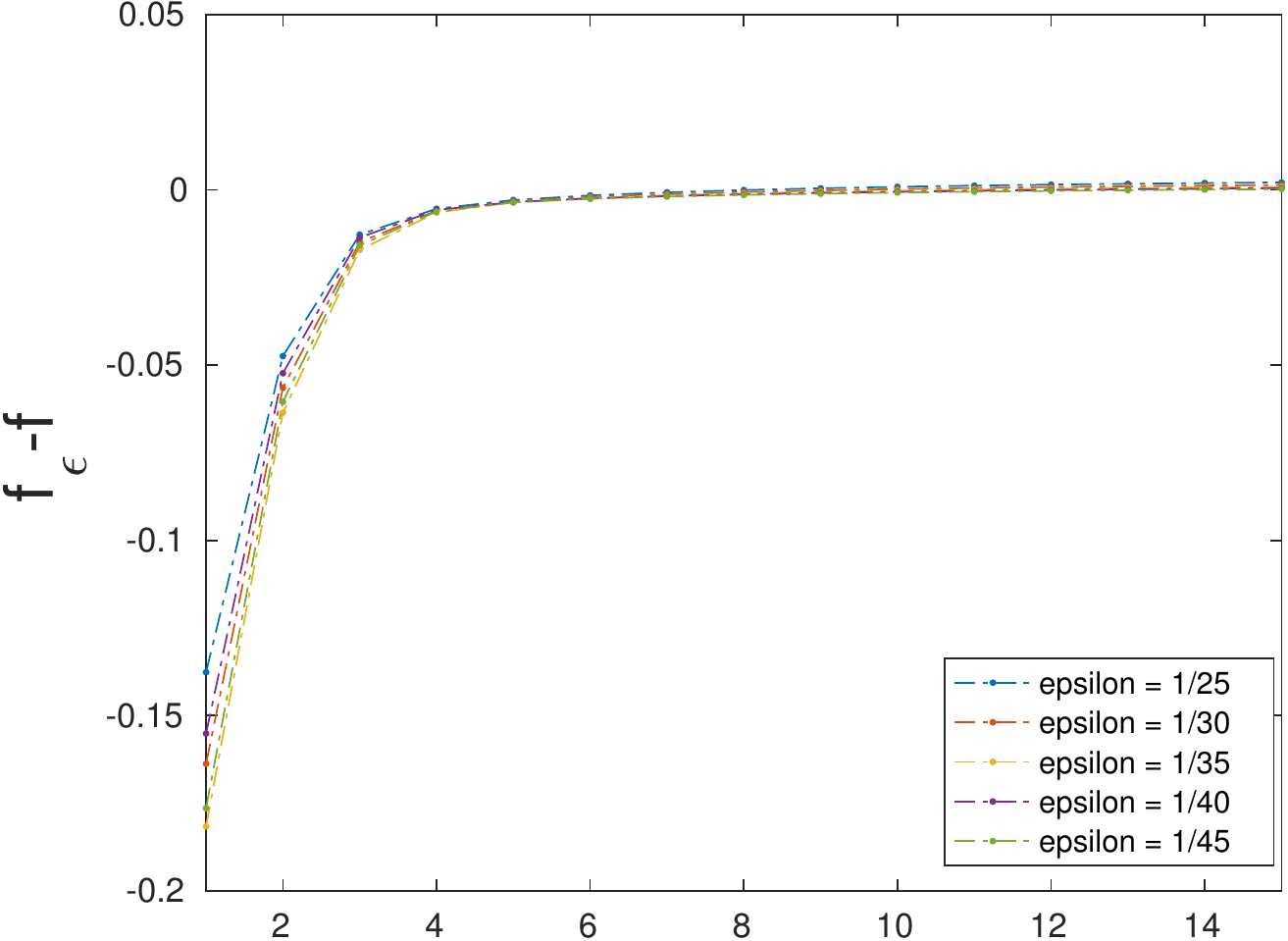}
\caption{Difference of the classical and the $\Eps$-Milne equation along the ray $(\eta, \theta) = (n \Eps, \Eps)$.}
\label{Figure:3}\
\end{figure}

\end{document}